\patchcmd{\section}{\scshape}{\bfseries\scshape}{}{}
\renewcommand{\@secnumfont}{\bfseries}
\title{Statistical limits of correlation detection in trees}
\author{Luca Ganassali}
\address{\vspace{-0.2cm}{\footnotesize Inria, DI/ENS, PSL Research University, Paris, France}\vspace{-0.5cm}}
\author{Laurent Massoulié}
\address{\vspace{-0.2cm}{\footnotesize MSR-Inria Joint Centre, Inria, DI/ENS, PSL Research University, Paris, France}\vspace{-0.5cm}}
\author{Guilhem Semerjian}
\address{\vspace{-0.2cm}{\footnotesize Laboratoire de Physique de l’École normale supérieure, ENS, Université PSL, CNRS, Sorbonne Université, Université Paris Cité, F-75005 Paris, France}\vspace{-0.5cm}}
\date{\today}
\begin{document}

\begin{abstract}
	In this paper we address the problem of testing whether two observed trees $(t,t')$ are sampled either independently  or from a joint distribution  under which they are correlated. 
	This problem, which we refer to as \emph{correlation detection in trees}, plays a key role in the study of graph alignment for two correlated random graphs. Motivated by graph alignment, we investigate the conditions of existence of one-sided tests, i.e. tests which have vanishing type I error and non-vanishing power in the limit of large tree depth. 
	
	For the correlated Galton-Watson model with Poisson offspring of mean $\lambda>0$ and correlation parameter $s \in (0,1)$, we identify a phase transition in the limit of large degrees at $s = \sqrt{\alpha}$, where $\alpha \sim 0.3383$ is Otter's constant. Namely, we prove that no such test exists for $s \leq \sqrt{\alpha}$, and that such a test exists whenever $s > \sqrt{\alpha}$, for $\lambda$ large enough.
	
	This result sheds new light on the graph alignment problem in the sparse regime (with $O(1)$ average node degrees) and on the performance of the \texttt{MPAlign} method studied in \cite{GMLTrees2021journal,piccioli2021aligning}, proving in particular the conjecture of \cite{piccioli2021aligning} that \texttt{MPAlign} succeeds in the partial recovery task for correlation parameter $s>\sqrt{\alpha}$ provided the average node degree $\lambda$ is large enough. 

 {As a byproduct, we identify a new family of orthogonal polynomials for the Poisson-Galton-Watson measure which enjoy remarkable properties. These polynomials may be of independent interest for a variety of problems involving graphs, trees or branching processes, beyond the scope of graph alignment.}
\end{abstract}

\maketitle

\section*{Introduction}

Upon the observation of two unlabeled rooted trees $t$ and $t'$ of depth at most $d$, how well can the statistician tell whether the trees are correlated or independent? This fundamental statistical task, namely correlation detection in trees, has been introduced in \cite{Ganassali20a} and further studied in \cite{GMLTrees2021journal} and \cite{piccioli2021aligning}. The case of \ER graphs has been investigated \cite{Barak2019}: information-theoretic limits are given in \cite{Wu20,Ding22b}, and \cite{Mao21_counting} proposes an algorithm based on counting trees which succeeds in polynomial time in some regime of parameters. 

Correlation detection in trees can be defined per se and studied as such; the related question of finding proximity measures between unlabeled trees has been addressed in fields as diverse as e.g. combinatorics \cite{Micheli2006}, theoretical computer science \cite{WuHuang2010}, and phylogenetics \cite{Choi2009}.

Recent work \cite{Ganassali20a,GMLTrees2021journal,piccioli2021aligning} also shows that this problem arises naturally in the study of \emph{graph alignment} \cite{Cullina2017,Cullina18,Wu2021SettlingTS, fan2019ERC} in the following manner. Consider the so-called correlated \ER graph model which consists of two graphs $G,G'$ on node set $[n]$ where for each pair of  nodes $i<j\in [n]$, the indicators of $(i,j)$ edge presence in the two graphs are Bernoulli random variables with parameter $q \in (0,1)$ and correlation $s \in (0,1)$. Let then graph $H$ be obtained from $G'$ by relabeling its nodes according to some uniformly random permutation $\pi^\star$. 

In this setup, graph alignment consists in recovering $\pi^\star$ from the observation of $(G,H)$. A potential approach aims to determine whether node $u$ of $G$ is matched to node $u'$ of $H$, namely whether $u' = \pi^\star(u)$, based on the local structures of the graphs $G$, $H$ around these candidate vertices $u$, $u'$. 
With this procedure,  considered in \cite{Ganassali20a,GMLTrees2021journal,piccioli2021aligning}, one forms the estimator $\hat\pi$ 
such that $\hat\pi(u)=u'$ if and only if the local structure of graph $G$ in the neighborhood of node $u$ is somehow \emph{close} (or more precisely, \emph{correlated}) to the local structure of graph $H$ in the neighborhood of node $u'$. 

In the sparse regime where $q= \lambda/n$ with $\lambda = \Theta(1)$, it is well known \cite{Benjamini2011} that the neighborhoods up to fixed distance $d$ of node $u$ (resp. $u'$) in $G$ (resp. $G'$), are both asymptotically as $n\to\infty$ distributed as Galton-Watson branching trees. More specifically, if $u' = \pi^\star(u)$, then the two neighborhoods are asymptotically jointly distributed as correlated Galton-Watson branching trees -- a distribution denoted $\dPls_{d}$. On the other hand, for pairs of nodes $(u,u')$ taken at random in $[n]$, the two neighborhoods are asymptotically independent Galton-Watson branching trees -- with distribution denoted $\dPl_{d}$. Hence detecting correlation in random trees appears naturally as a subroutine to solve the recovery task in the random graph alignment problem. 

In the sparse regime, random graph alignment is particularly challenging. Indeed, only a partial fraction of the nodes can be recovered by any estimator, as shown by \cite{Cullina2017}. More precisely, the best fraction one can hope for is upper-bounded by the typical proportion of nodes in the giant component of the underlying intersection graph (i.e. the graph containing the edges in both $G$ and $G'$), implying that only a vanishing fraction can be aligned when the average degree $\lambda s$ of the intersection graph verifies $\lambda s\leq 1$,  see \cite{ganassali2021impossibility}. Information-theoretic bounds for partial recovery have been progressively improved \cite{Hall-LM}, \cite{Wu2021SettlingTS}, culminating with the result of \cite{Ding22} which shows that a non-vanishing fraction of nodes can be aligned provided  $\lambda s > 1$. Combined with the aforementioned result of \cite{ganassali2021impossibility}, it implies that $\lambda s=1$ is the threshold for information-theoretic partial alignment.

Other recent works have provided algorithms for deciding whether the two local structures up to distance $d$ are correlated, namely if the pair $(u',u)$ corresponds to the same node in the underlying intersection graph $G \land G'$. As further discussed in Remark~\ref{remark:types_de_tests}, the right notion of test in the present context is that of \emph{one-sided tests}, namely tests for deciding between the null hypothesis (trees uncorrelated) and the alternative hypothesis (trees correlated) with vanishing type I error and non-vanishing power as the tree depth $d$ increases.

A first approach proposed in \cite{Ganassali20a} consists in a test based on a measure of similarity between two trees: the \emph{tree matching weight}, defined as the maximal size of a common subtree, measured by its number of leaves. \cite{Ganassali20a} shows that a one-sided test can be constructed from the tree matching weight statistic for the parameter range $\{(\lambda,s):\lambda\in(1,\lambda_0],\; s\in(s^*(\lambda),1]\}$, where $\lambda_0>1$ and $s^*(\lambda)<1$.

The work \cite{GMLTrees2021journal} studies the optimal test based on the likelihood ratio and characterizes ranges of parameters where it succeeds or fails at being one-sided. 
{Then, they} propose a message-passing algorithm for graph alignment, \texttt{MPAlign}, that is naturally inspired by the related problem on trees. {The study} goes on to show that, in some parameter ranges, \texttt{MPAlign} provably returns a partial matching containing a non-vanishing fraction of correctly aligned nodes and a vanishing fraction of misaligned vertices, i.e. achieves one-sided partial recovery. The connection between the tree and the graph problems is formalized by the following.

\begin{theorem}[\cite{GMLTrees2021journal}, Theorem 2]\label{theorem:MPalign}
	For a given $(\lambda,s)$, if there exists a one-sided test for correlation detection in the tree problem, then one-sided partial alignment in the correlated \ER model is achieved in polynomial time by the algorithm \texttt{MPAlign}.
\end{theorem}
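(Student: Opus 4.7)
The strategy is to exploit the local weak convergence of the sparse correlated \ER model to the correlated Galton-Watson model: for any fixed depth $d$, the joint neighborhoods $(B_d(u,G), B_d(u',H))$ of a pair of nodes $(u,u') \in [n]^2$ are asymptotically distributed, as $n \to \infty$, according to $\dPl_d$ when $u' \neq \pi^\star(u)$, and according to $\dPls_d$ when $u' = \pi^\star(u)$. Since \texttt{MPAlign} is the belief-propagation recursion that, on a genuine depth-$d$ tree, computes the Bayesian likelihood ratio between $\dPls_d$ and $\dPl_d$, its local decisions on graph neighborhoods should coincide, up to the rare short cycles present in a sparse graph, with the optimal Neyman-Pearson tree test applied to $(B_d(u,G), B_d(u',H))$.

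The proof would proceed in four steps. (i) From the hypothesis and a monotonicity argument, replace the given one-sided test with a thresholding $\one\{L_d \geq \tau_d\}$ of the tree likelihood ratio $L_d = \mathrm{d}\dPls_d / \mathrm{d}\dPl_d$, with type I error $\alpha_d \to 0$ and power $\beta_d \to \beta > 0$ as $d \to \infty$; this is precisely the statistic computed by \texttt{MPAlign} on the tree model. (ii) Choose a depth $d = d(n)$ diverging slowly with $n$, so that on the one hand the depth-$d$ graph neighborhoods remain tree-like and couple to the appropriate $\GW$ tree with vanishing total variation, on the other hand $\alpha_d$ decays fast enough for the false-match count to be controlled (see below), and $d = O(\log n)$ to guarantee polynomial runtime. (iii) Use the coupling to transport the tree-test guarantee to the graph: in expectation, at least $(\beta - o(1))n$ truly matched pairs $(u, \pi^\star(u))$ are output by \texttt{MPAlign} after a per-vertex conflict-resolution rule keeping only pairs where both endpoints vote uniquely for each other, while the expected number of misaligned surviving pairs is $o(n)$. (iv) A second-moment argument, exploiting the local nature of BP messages, upgrades these in-expectation bounds to high-probability ones, yielding one-sided partial alignment.

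The main obstacle is the tension inherent in step (ii): the one-sided test assumption only states $\alpha_d = o(1)$, whereas the alignment analysis needs $\alpha_d$ to vanish fast enough, roughly $\alpha_d \cdot n \to 0$, for each correct match to have a constant chance of surviving the conflict-resolution among $n-1$ competing candidates, and for the total number of misaligned pairs to be $o(n)$. This requires quantifying how rapidly $\alpha_d$ can be driven to zero while preserving a non-vanishing power $\beta_d \to \beta > 0$; the argument typically uses a Chernoff-type tail bound on $L_d$ under $\dPl_d$, combined with $\dEl[L_d]=1$ and the existence of the test, to extract an explicit decay rate. A secondary technical point is the influence of the rare short cycles in the depth-$d$ graph neighborhood on the value of the BP statistic, which is handled by a stability argument showing that \texttt{MPAlign}'s output is continuous in the local structure and coincides with the tree-test output on tree-like neighborhoods.
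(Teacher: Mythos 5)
This theorem is \emph{cited}, not proved, in the present paper: it is reproduced verbatim from \cite{GMLTrees2021journal} (Theorem~2 there), and the present paper simply invokes it. There is therefore no ``paper's own proof'' for your proposal to be compared against here; the proof lives in the cited reference.

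That said, a few remarks on your reconstruction. The overall skeleton -- local weak convergence of the correlated \ER neighborhoods to $(\dPls_d,\dPl_d)$, Neyman--Pearson reduction to a likelihood-ratio threshold test, transport to the graph, and control of false positives -- is indeed the right frame, and you have correctly isolated the genuinely delicate point in step (ii): the hypothesis only guarantees a type~I error $\alpha_d = o(1)$ in $d$, while the graph problem effectively pits each candidate pair against $\Theta(n)$ competitors, so one needs the type~I error to decay at a quantified rate while the power stays bounded away from zero, and $d$ must remain $O(\log n)$ for the depth-$d$ neighborhoods to stay tree-like and for the algorithm to run in polynomial time. In \cite{GMLTrees2021journal} this tension is resolved not by a generic Chernoff bound on $L_d$ but by a propagation argument of precisely the type the present paper develops in Proposition~\ref{prop:bounding_LR_by_induction}: starting from an event $S\subset\cX_{d_0}^2$ with $\dPls_{d_0}(S)\geq c$ and $\dPl_{d_0}(S)\leq\eps$, one constructs events at all larger depths whose type~I error halves at each level while the power remains $\geq c$, so $\dPl_d$-mass of the rejection region decays geometrically in $d$; taking $d\asymp\log n$ then gives a false-alarm probability that is polynomially small in $n$ at constant power, which is exactly what the graph argument requires. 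Your proposal gestures at this but leaves it as an open ``obstacle,'' so as written it is a gap rather than a proof. A second, smaller inaccuracy: \texttt{MPAlign} does not run an independent tree test on each of the $n^2$ pairs and then apply a mutual-best-match rule; it is a single message-passing scheme computing all the local likelihood ratios simultaneously and then thresholding, and the $o(n)$-mistakes bound comes from the first-moment (Markov) estimate on the number of high-likelihood wrong pairs combined with the geometric decay above, rather than from a per-vertex conflict-resolution count as you describe.
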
 

In parallel, \cite{piccioli2021aligning} studied this same algorithm, giving also numerical results and insights for extensions to more general models. For the correlated Galton-Watson models, the authors conjectured -- among other things -- that $s = \sqrt{\alpha}$ is the sharp threshold for one-sided detection asymptotically in the limit $\lambda \to \infty$, where $\alpha \sim 0.3383$ is the Otter's constant defined below in Proposition \ref{prop:otter}. 

The goal of this study is to prove the previous conjecture, hence improving the understanding of correlation detection in trees. 
Most relevant to this conjecture and to the present paper is the recent work by Mao, Wu, Xu and Yu \cite{Mao21_counting} previously mentioned, which studied the correlation detection problem in \ER graphs. The authors propose an algorithm based on counting (signed) trees, which can provably distinguish graph correlation efficiently as soon as $s > \sqrt{\alpha}$, for any average degree. 

The results presented here are different for several reasons: first, we study the detection problem on trees instead of \ER graphs. Also, as we consider an optimal test, we are able to show that $s \leq \sqrt{\alpha}$ implies impossibility of one-sided detection, so that one-sided detection exhibits a sharp threshold at $s = \sqrt{\alpha}$, asymptotically in $\lambda$, see Figure \ref{fig:phase_diagram_new}. 
{The combination of our positive result on the existence of one sided tests for the detection problem on trees for $s > \sqrt{\alpha}$ and $\lambda$ large enough with Theorem~\ref{theorem:MPalign} implies the existence of a polynomial-time algorithm for partial recovery in the correlated random graph problem in this regime of parameters; the same conclusion was reached in an independent and almost simultaneous work~\cite{Mao23chandeliers}, their algorithm being based on counting trees of a certain kind called chandeliers. Otter's threshold $\sqrt{\alpha}$ in the correlation parameter also plays a crucial role in~\cite{Mao23chandeliers}, which in addition handles the (almost) exact recovery task when the average degree of the graphs diverge with the size.}

We believe {first} that this study paves the way for further work in this field such as generalizations to other graph models
and design of more efficient algorithms for tree correlation detection or graph alignment. 
{Second, the scope of this article goes beyond this field of research. Indeed, the core result of the paper is the decomposition of the likelihood ratio (see Theorem \ref{theorem:eigendecomposition}). In this decomposition, we identify a new family of orthogonal polynomials for the Poisson-Galton-Watson measure which enjoy remarkable properties (see Remark \ref{remark:charlier}). These polynomials may be of independent interest for a much broader variety of problems involving graphs, trees or branching processes.}

\section{Problem statement, main results}
\subsection{Definitions, notations}
Throughout the paper, $\dN$ (resp. $\dN_+$) denotes the set of non-negative (resp. positive) integers, and $\Poi(\mu)$ denotes the Poisson distribution of parameter $\mu >0$, namely for $k \in \dN$
$$\Poi(\mu)(k) = e^{-\mu}\frac{\mu^k}{k!} \, .$$  

If $P$ and $Q$ are two probability measures on the same measurable space $\cX$ such that $P$ is absolutely continuous with respect to $Q$, the Kullback-Leibler divergence (or relative entropy) from $Q$ to $P$ is defined as follows:

$$ \KL(P \Vert Q) := \int_{\cX} \log \frac{dP}{dQ}(x) P(dx) = \dE_P \left[ \log \frac{dP}{dQ}(X) \right]\, .$$

{A central object of study in this paper will be unlabeled rooted trees; these can be defined as equivalence classes of labeled rooted trees, quotiented by parenthood preserving isomorphisms, and this is probably the most usual definition. For our purposes a slightly different, yet equivalent, point of view will turn out to be more useful, in order to exploit the recursive nature of tree structures. Indeed, a labeled rooted tree $T$ can be naturally described by the degree of its root, say $\ell$, and the list $T_1,\dots,T_\ell$ of the subtrees rooted at the offsprings of the root. Such a decomposition is ill-behaved under the unlabelling transformation, since it requires to specify which among the $T_1,\dots,T_\ell$ correspond to isomorphic subtrees. This led us to the following formalization, that encompasses simultaneously the recursive nature of trees and the symmetrization of their unlabeled versions (see Fig.~\ref{fig_def_trees} for an illustration):}

\begin{figure}
    \centering
    \begin{tikzpicture}
        \fill[black] (0,0) circle (2pt) node [above] {$\varnothing$};
        \fill[black] (-1,-1) circle (2pt) node [left] {$1$};
        \fill[black] (-1,-2) circle (2pt) node [left] {$11$};
        \fill[black] (0,-1) circle (2pt) node [left] {$2$};
        \fill[black] (1,-1) circle (2pt) node [left] {$3$};
        \fill[black] (1,-2) circle (2pt) node [left] {$31$};
        \draw (-1,-2) -- (-1,-1) -- (0,0) -- (1,-1) -- (1,-2);
        \draw (0,0) -- (0,-1);
        \draw (-1.4,-2.2) -- (-1.6,-2.2) -- (-1.6,0.5) -- (-1.4,0.5);
        \draw (1.3,-2.2) -- (1.5,-2.2) -- (1.5,0.5) -- (1.3,0.5);
        \draw (3.5,-1) node {$= \ \ \{N\ = 1 \ , \ \ N\ = 2 \}$};
    \fill[black] (2.85,-1.1) circle (2pt);
    \fill[black] (4.4,-1.1) circle (2pt);
    \fill[black] (4.4,-1.35) circle (2pt);
    \draw (4.4,-1.35) -- (4.4,-1.1);
    \end{tikzpicture}
    \caption{An illustration of Definition~\ref{def:rooted_unlabeled_trees} with an unlabeled rooted tree of depth $d=2$, represented as the equivalence class of a canonically labeled tree, and in terms of the number of copies of trees of depth $1$ rooted at the offsprings of the root.}
    \label{fig_def_trees}
\end{figure}

\begin{definition}[Finite rooted unlabeled trees]\label{def:rooted_unlabeled_trees}
	We recursively define the set $\cX_d$ of \emph{finite rooted unlabeled trees of depth at most $d \geq 0$}. For $d = 0$, $\cX_d$ contains the trivial tree reduced to its root node, denoted $\bigcdot$. For $d \geq 1$, having defined $\cX_0, \ldots, \cX_{d-1}$, we define $\cX_d$ as follows: a finite rooted unlabeled tree $t \in \cX_d$ consists in a sequence $\{N_{\tau} \}_{\tau \in \cX_{d-1}}$ of non-negative integers with finite support, that is such that 
	$$ \card{\left\{\tau \in \cX_{d-1}, N_{\tau} \neq 0 \right\}}  < \infty,$$
	where $N_{\tau}$ is the number of children of the root in $t$ which subtrees are equal to $\tau$.
	
	{Throughout the study, unless stated otherwise, we only consider finite rooted unlabeled trees and will sometimes omit the adjectives `finite', `rooted' or `unlabeled' in the sequel for brevity.}  
\end{definition} 

\begin{remark}
	With the previous definition, equality between two rooted unlabeled trees $t := \{N_{\tau} \}_{\tau \in \cX_{d-1}}$ and $t' := \{N'_{\tau} \}_{\tau \in \cX_{d-1}}$ is defined as $N_{\tau} = N'_{\tau} $ for all $\tau \in \cX_{d-1}$.
\end{remark}

\begin{remark}
	Denoting one-to-one correspondence by $\simeq$, we remark that $\cX_1 \simeq \dN$, and that more generally for each $d \geq 1$, 
	$$ \cX_d \; \simeq \; \bigcup_{\ell \geq 0} \; \bigcup_{\{\tau_1, \ldots, \tau_\ell \} } \dN_+^{\ell} \ ,$$ 
	where the second union is over subsets of $\ell$ distinct elements in $\cX_{d-1}$.
	Hence, $\cX_d$ is countably infinite for all $d \geq 1$.
\end{remark}

\begin{definition}[Size of a rooted unlabeled tree]
	The \emph{size}, or \emph{number of nodes}, of a tree $t \in \cX_{d}$ is denoted by $\card{t}$ and defined recursively as follows. First, if $d=0$, we set $\card{\bigcdot}=1$. Then, for $d \geq 1$, writing $t = \{N_{\tau} \}_{\tau \in \cX_{d-1}}$, one has
	\begin{equation*}
		\card{t} = 1 + \sum_{\tau \in \cX_{d-1}} N_{\tau} \cdot \card{\tau} \,.
	\end{equation*}
\end{definition}

\begin{definition}[Depth of a rooted unlabeled tree]
	The \emph{depth} of a tree $t \in \cX_{d}$ is denoted by $\depth{t}$ and defined recursively as follows. First, if $d=0$, we set $\depth{\bigcdot}=0$. Then, for $d \geq 1$, writing $t = \{N_{\tau} \}_{\tau \in \cX_{d-1}}$, one has
	\begin{equation*}
		\depth{t} = 1 + \max\left\{ \one_{N_\tau \geq 1} \cdot \depth{\tau}, \; \tau \in \cX_{d-1} \right\} \,.
	\end{equation*}
\end{definition}

\begin{remark}\label{remark:Xd_increasing}
	Note that for $t \in \cX_{d}$, $\depth{t} \leq d$. Note also that for all $d< d'$ there is a canonical injective mapping $\phi_{d\to d'}$ from $\cX_d$ to $\cX_{d'}$ defined in the following recursive manner. For $t=\{N_\tau\}_{\tau\in\cX_{d-1}}\in\cX_d$, let
	$\phi_{d\to d'}(t)=\{N'_{\tau'}\}_{\tau'\in\cX_{d'-1}}$
	where 
	$$
	N'_{\tau'}= \begin{cases}
		N_\tau& \hbox{if there is $\tau\in \cX_{d-1}$ such that } \tau'=\phi_{d-1\to d'-1}(\tau),\\
		0&\hbox{otherwise,}
	\end{cases} 
	$$
	the recursion being initialized by defining $\phi_{0\to d}(\bigcdot)=\{N_\tau\}_{\tau\in\cX_{d-1}}$ with $N_\tau\equiv 0$.
	
	It is readily seen that $t\in\cX_{d'}$ is in the image of $\phi_{d\to d'}$ if and only if $\depth{t}\leq d$. In turn, assuming that $d=\depth{t}\leq d'$, one can define the canonical representative $\rep{t}$ of $t$ by setting $\rep{t}:=\phi_{d\to d'}^{-1}(t)$. 
\end{remark}






\subsubsection{Formal power series}
If $f$ is a formal power series in the variable $x$, we denote $[x^n]f(x)$ the coefficient of the monomial $x^n$ in $f$, i.e. if $f(x)=\sum_{n \geq 0}a_n x^n$ then $[x^n]f(x) := a_n$.

If $f$ is a formal power series in $m$ variables $(x_1, \ldots, x_m)$, and $\ell := (\ell_1, \ldots, \ell_m)$ is a tuple of non negative integers, we use the shorthand notation $[x^\ell]f(x_1, \ldots, x_m)$ for $[x_1^{\ell_1} \cdots x_m^{\ell_m}]f(x_1, \ldots, x_m)$. 

Throughout the paper we will often consider families indexed by a countably infinite set $\cZ$, and in particular use the same shorthand $[x^\ell]$ for $[\prod_{z \in \cZ} x_z^{\ell_z}]$, where $x=\{x_z\}_{z \in \cZ}$ is a family of formal variables, and $\ell = \{\ell_z\}_{z \in \cZ}$ a family of non-negative integers; in such occurences only a finite number of $\ell_z$ will be non-zero, the definition thus reduces to the finite-dimensional one by taking $x_z=0$ whenever $\ell_z=0$. This \emph{finite support} property will also make summations over $z \in \cZ$ of functions of $\ell_z$ well-defined.

By convention $\left\{u_z \right\}_{z \in \cZ}$ will stand for non-negative integer sequences $\left\{u_z \right\}_{z \in \cZ}$ with finite support, hence from the definition of $\cX_d$ the sum $\sum_{t \in \cX_d}$ will be equivalently denoted $\sum_{\left\{N_\tau \right\}_{\tau \in \cX_{d-1}}}$. 

\subsubsection{Cardinality of unlabeled trees with given size and depth}

\begin{definition}[Number of trees]\label{def:An_Adn}
	For $n \geq 1$, let us define
	\begin{equation}\label{eq:def:An}
		A_n := 
		\card{ \left\{ t \in \cX_{n-1}, \; \card{t}=n \right\} } \, ,
	\end{equation} that is $A_n$ is the number of (distinct) unlabeled rooted trees of size $n$. For $d \geq 0$, we furthermore define
	\begin{equation}\label{eq:def:Adn}
		A_{d,n} := \card{ \left\{ t \in \cX_d, \; \card{t}=n \right\} } \, ,
	\end{equation} that is $A_{d,n}$ is the number of (distinct) unlabeled rooted trees of size $n$ and depth at most $d$. 
\end{definition}

\begin{remark}
	It follows from Remark \ref{remark:Xd_increasing} together with $\depth{t} \leq \card{t}-1$ 
	that for all $n$  the sequence $(A_{d,n})_{d \geq 0}$ is non-decreasing and converges to $A_n$.
\end{remark}

We now state a celebrated result by Otter \cite{Otter48}:

\begin{proposition}[Asymptotic number of unlabeled trees, \cite{Otter48}]\label{prop:otter}
	One has
	\begin{equation}\label{eq:theorem:otter}
		A_n \underset{n \to \infty}{\sim} \frac{D}{n^{3/2}} \left( \frac{1}{\alpha} \right)^n,
	\end{equation} for some $D>0$, where $\alpha \in (0,1)$ is the Otter constant, numerically $\alpha = 0.3383219...$ As a consequence the power series
	\begin{equation}\label{eq:prop:A_dn:phi}
		\Phi(x) := \sum_{n \geq 1} A_{n} x^{n-1} 
	\end{equation} 
	has a radius of convergence equal to $\alpha$, and $\Phi(\alpha)<\infty$.
\end{proposition}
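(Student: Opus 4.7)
The plan is to follow Otter's classical approach via analysis of the generating function of unlabeled rooted trees, combined with a singularity analysis.

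First, I would derive a functional equation for the generating function $T(x) := \sum_{n \geq 1} A_n x^n = x\Phi(x)$. Using the fact that a rooted unlabeled tree is a root attached to a \emph{multiset} of rooted unlabeled subtrees, Pólya's enumeration theorem yields
\begin{equation*}
 T(x) = x \exp\!\left( \sum_{k \geq 1} \frac{T(x^k)}{k} \right).
\end{equation*}
Equivalently, this can be written as $T(x) = x \prod_{k \geq 1} (1-x^k)^{-A_k}$. A coarse upper bound such as $A_n \leq 4^n$ (e.g.\ comparing to ordered plane trees which are counted by Catalan numbers) shows that $T$ has a positive radius of convergence; let $\alpha \in (0,1)$ denote this radius. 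Since the coefficients $A_n$ are non-negative, $\alpha$ is a positive real singularity of $T$.

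Next, I would analyze the behavior of $T$ near $x = \alpha$. The key observation is that for any fixed $k \geq 2$, the series $T(x^k)$ is analytic in a neighborhood of $\alpha$ since $\alpha^k < \alpha$ lies strictly inside the disk of convergence; thus the function $G(x,w) := x \exp\!\big( w + \sum_{k \geq 2} T(x^k)/k \big)$ is analytic in a neighborhood of $(\alpha, T(\alpha))$, and the implicit equation $w = G(x,w)$ is satisfied by $w = T(x)$. Standard arguments for such smooth implicit-function schemas (see e.g.\ Flajolet--Sedgewick) show that $\alpha$ is characterized by the simultaneous vanishing $w = G(x,w)$ and $\partial_w G(x,w) = 1$, and that $T$ admits a square-root singularity: there exist constants $\tau = T(\alpha) < \infty$ and $c > 0$ with
\begin{equation*}
 T(x) = \tau - c \sqrt{1 - x/\alpha} + O(1-x/\alpha)
\end{equation*}
as $x \to \alpha^{-}$, and $T$ continues analytically to a $\Delta$-domain slit at $\alpha$. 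In particular $\Phi(\alpha) = \tau/\alpha < \infty$, establishing the second assertion.

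Finally, applying the standard transfer theorem for square-root singularities (Flajolet--Odlyzko), the expansion above transfers coefficient-wise to
\begin{equation*}
 A_n = [x^n] T(x) \underset{n\to\infty}{\sim} \frac{c}{2\sqrt{\pi}} \, n^{-3/2} \, \alpha^{-n},
\end{equation*}
which yields \eqref{eq:theorem:otter} with $D := c/(2\sqrt{\pi}) > 0$. The numerical value $\alpha \approx 0.3383219\ldots$ is then obtained by numerically solving the system $w = G(\alpha,w)$, $\partial_w G(\alpha,w) = 1$.

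The main obstacle is the singularity analysis step: one must verify that the implicit equation defining $T$ falls into the smooth analytic schema framework, in particular that $T(\alpha)$ is finite (equivalently, that $\alpha$ is not a natural boundary coming from the infinite product) and that the singularity is of square-root type rather than of a different nature. This relies on the fact that the correction terms $\sum_{k\geq 2} T(x^k)/k$ remain analytic at $\alpha$, which is the content of Otter's original argument.
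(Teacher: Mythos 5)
The paper does not actually prove Proposition \ref{prop:otter}: it is stated as a classical result and cited directly to Otter's 1948 paper, so there is no ``paper's own proof'' to compare against. Your sketch is nonetheless a correct outline of the standard route to this result, essentially Otter's original argument recast in the modern language of Flajolet--Sedgewick singularity analysis: set up the P\'olya functional equation $T(x)=x\exp\bigl(\sum_{k\ge 1}T(x^k)/k\bigr)$, isolate the $k=1$ term to obtain a smooth implicit-function schema $T(x)=G(x,T(x))$ with $G$ analytic near $(\alpha,T(\alpha))$ because the tail $\sum_{k\ge 2}T(x^k)/k$ is analytic there, deduce the square-root singularity from the characteristic system $\{w=G(x,w),\ \partial_w G(x,w)=1\}$, and transfer it to the coefficient asymptotics. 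This is the right skeleton, and the obstacle you flag at the end (finiteness of $T(\alpha)$ and the square-root nature of the singularity) is indeed the crux.

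Two small points you gloss over that would need to be filled in for a complete argument. First, your analyticity claim for $\sum_{k\ge 2}T(x^k)/k$ near $\alpha$ uses $\alpha^2<\alpha$, i.e.\ $\alpha<1$; this requires an exponential \emph{lower} bound on $A_n$ (the upper bound $A_n\le 4^n$ gives $\alpha>0$ but not $\alpha<1$). A quick way is the convolution bound $A_n\ge\sum_{a}A_aA_{n-1-a}$ obtained by considering roots with exactly two children, which forces super-polynomial, hence exponential, growth. Second, the finiteness $T(\alpha)<\infty$ deserves an explicit argument: writing $T(x)=g(x)e^{T(x)}$ with $g(x):=x\exp\bigl(\sum_{k\ge 2}T(x^k)/k\bigr)$ analytic and bounded near $\alpha$, one sees that if $T(x)\to\infty$ as $x\to\alpha^-$ then $T(x)e^{-T(x)}\to 0$ while $g(\alpha)>0$, a contradiction. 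With those two points supplied, your proposal is a sound proof sketch of the cited result.
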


An object of interest in the sequel is the generating function of the sequence $\{A_{d,n}\}_{n\geq 1}$. The following proposition defines this generating function $\Phi_d$ and states some elementary properties of $\Phi_d$.

\begin{proposition}[Control of the generating function of $\{A_{d,n}\}_{n\geq 1}$]\label{prop:A_dn}
	For all $d \geq 0$, define the power series
	\begin{equation}\label{eq:prop:A_dn:phid}
		\Phi_d(x) := \sum_{n \geq 1} A_{d,n} x^{n-1} \, .
	\end{equation} Then for all $x\geq 0$, 
	\begin{equation}\label{eq:prop:A_dn:cv_phi}
		\Phi_d(x) \underset{d \to \infty}{\longrightarrow} \Phi(x) \, .
	\end{equation}
	Moreover, for all $d \geq 0$ and $t \in [0,1)$, there exists $A=A(d,t)$ such that 
	\begin{equation}\label{eq:prop:A_dn}
		\forall x \in [0,t], \, \Phi_d(x) \leq A \, .
	\end{equation}
\end{proposition}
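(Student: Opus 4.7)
The plan is to prove the two statements of Proposition \ref{prop:A_dn} separately, the first by monotone convergence and the second by induction on $d$ using a functional equation for $\Phi_d$.

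For the pointwise convergence \eqref{eq:prop:A_dn:cv_phi}, I would first observe that for each fixed $n\geq 1$, $A_{d,n}$ is non-decreasing in $d$ and stabilizes to $A_n$: indeed, by Remark \ref{remark:Xd_increasing} the maps $\phi_{d\to d+1}$ realize $\cX_d$ as an increasing chain inside $\cX_{d+1}$, and any tree of size $n$ has depth at most $n-1$, so $A_{d,n}=A_n$ for $d\geq n-1$. Since the terms $A_{d,n}x^{n-1}$ are non-negative, monotone convergence applied to the counting measure on $\dN_+$ yields $\Phi_d(x)\nearrow \Phi(x)$ for every $x\geq 0$, the limit being understood in $[0,+\infty]$ (and equal to $+\infty$ for $x>\alpha$).

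For the boundedness \eqref{eq:prop:A_dn}, the key step is to establish the functional equation
\begin{equation*}
\Phi_d(x) \;=\; \prod_{n\geq 1} (1-x^n)^{-A_{d-1,n}}, \qquad d\geq 1 .
\end{equation*}
To derive it, I would decompose a tree $t=\{N_\tau\}_{\tau\in \cX_{d-1}}\in\cX_d$ as a root together with a multiset of subtrees in $\cX_{d-1}$. Using $|t|=1+\sum_\tau N_\tau |\tau|$ and the finite-support convention on $\{N_\tau\}$, the generating function $\sum_{t\in \cX_d} x^{|t|}=x\,\Phi_d(x)$ factorizes as $x\prod_{\tau\in\cX_{d-1}}(1-x^{|\tau|})^{-1}$; regrouping the factors according to the size $|\tau|=n$, of which there are exactly $A_{d-1,n}$ in $\cX_{d-1}$, yields the displayed identity. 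Taking logarithms and expanding $-\log(1-y)=\sum_{k\geq 1}y^k/k$ term by term (legitimate by non-negativity) then gives
\begin{equation*}
\log \Phi_d(x) \;=\; \sum_{k\geq 1}\frac{x^k}{k}\,\Phi_{d-1}(x^k) .
\end{equation*}

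With this identity in hand the induction on $d$ is straightforward. The base case $d=0$ is trivial since $\cX_0=\{\bigcdot\}$ yields $\Phi_0\equiv 1$. For the inductive step, fix $t\in[0,1)$ and let $A(d-1,t)$ be a bound for $\Phi_{d-1}$ on $[0,t]$ furnished by the inductive hypothesis. For $x\in[0,t]$ and every $k\geq 1$ one has $x^k\leq t^k\leq t$, whence $\Phi_{d-1}(x^k)\leq A(d-1,t)$; the above identity then gives $\log\Phi_d(x)\leq -A(d-1,t)\log(1-t)$, so one may take $A(d,t):=(1-t)^{-A(d-1,t)}$. The main obstacle is conceptual: it lies in making sense of the infinite product indexed by the countable set $\cX_{d-1}$ and turning it into a combinatorial identity, for which the finite-support convention on $\{N_\tau\}$ and the regrouping by tree size are both essential; once this is done, the remainder of the argument is elementary.
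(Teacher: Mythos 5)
Your proof is correct and follows essentially the same route as the paper: monotone convergence for the pointwise limit, and the multiset-of-subtrees decomposition leading to the recursion $\log\Phi_d(x)=\sum_{k\geq 1}\frac{x^k}{k}\Phi_{d-1}(x^k)$ for the inductive bound. The only cosmetic difference is that you regroup the product over $\tau\in\cX_{d-1}$ by tree size to write it as $\prod_{n\geq 1}(1-x^n)^{-A_{d-1,n}}$, whereas the paper keeps the product indexed by $\tau$ directly.
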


\begin{proof}[Proof of Proposition \ref{prop:A_dn}]
	For each $n \geq 1$  the sequence $A_{d,n}$ is non-decreasing in $d$, and such that $A_{d,n} \underset{d \to \infty}{\longrightarrow} A_n$ (the limit being actually reached for $d \geq n-1$). The limit \eqref{eq:prop:A_dn:cv_phi} thus follows from the monotone convergence theorem. 
	
	
	We now establish a recursion property on the $\Phi_d$. Write
	\begin{flalign*}
		\Phi_{d+1}(x) & = \sum_{n \geq 1} x^{n-1} A_{d+1,n} = \sum_{t \in \cX_{d+1}} x^{\card{t}-1}  = \sum_{\set{N_\tau}_{\tau \in \cX_{d}}} x^{\sum_{\tau \in \cX_{d}} N_\tau \card{\tau}} 
		\\ 
		& \overset{(a)}{=} \prod_{\tau \in \cX_{d}} \sum_{N_\tau \geq 0} (x^{\card{\tau}})^{N_\tau}
		= \prod_{\tau \in \cX_{d}} \frac{1}{1-x^{\card{\tau}}} = \exp \left( - \sum_{\tau \in \cX_{d}} \log(1-x^{\card{\tau}})  \right) \\
		& = \exp\left(\sum_{\tau \in \cX_{d}} \sum_{j \geq 1} \frac{(x^{j})^\card{\tau}}{j} \right) \overset{(b)}{=} \exp\left(\sum_{j \geq 1} \frac{x^j}{j} \Phi_d(x^{j}) \right) \, .
	\end{flalign*}
	All terms being positive, the interchanges in steps $(a)$ and $(b)$ are legitimate. Equation \eqref{eq:prop:A_dn} is then easy to establish by induction with this last formula. For $d=0$, $A_{d,n} = \one_{n=1}$, hence $\Phi_0(x)=1$ and \eqref{eq:prop:A_dn} holds with $A=1$. Assume that \eqref{eq:prop:A_dn} holds at depth $d$ for all $t \in [0,1)$ with constant $A(d,t)$. Then, by the previous computation, for all $t \in [0,1)$ and $x \in [0,t]$, since $x^j \in [0,t]$ for all $j \geq 1$ we have
	\begin{flalign*}
		\Phi_{d+1}(x) & = \exp\left(\sum_{j \geq 1} \frac{x^j}{j} \Phi_d(x^{j}) \right) \leq \exp\left(\sum_{j \geq 1} \frac{x^j}{j} A(d,t) \right) \\
		& = \exp\left(- A(d,t) \log(1-x)  \right) = \left(\frac{1}{1-x}\right)^{A(d,t)} \leq \left(\frac{1}{1-t}\right)^{A(d,t)} =: A(d+1,t)\, .
	\end{flalign*} Thus, \eqref{eq:prop:A_dn} holds at depth $d+1$. In particular, \eqref{eq:prop:A_dn} shows that $\Phi_d$ has a radius of convergence not smaller than $1$. As a matter of fact the radius of convergence of $\Phi_d$ is equal to 1 for all $d\geq 1$: indeed, $A_{d,n} \geq 1$ for all $n\geq 1$ (consider the tree made of the root and $n-1$ children), hence $\Phi_d(x)=\infty$ for all $x \geq 1$.
\end{proof}

\subsubsection{Models of random trees}
We now define the models of random trees considered in the study. 

{Let us start with the Galton-Watson distribution of offspring $\Poi(\mu)$ with $\mu>0$ at depth at most $d$. The most usual definition in terms of labeled trees correspond to construct recursively from the root a tree up to depth $d$ where each vertex has an independent number of offsprings drawn from $\Poi(\mu)$. A canonical labelling is obtained by assigning the label $\varnothing$ to the root, the $i$-th offspring of a vertex $v$ being labeled $vi$ (see Fig.~\ref{fig_def_trees} for an example). Note that conditional on the degree of the root, each of its offspring is the root of an independent labeled Galton-Watson tree of depth at most $d-1$. This random construction induces a probability distribution on unlabeled trees when identifying the trees that are related by parenthood-preserving isomorphisms, denoted $\GWmu_{d}$ and formally defined below. After unlabelling, each offspring of the root is the root of an unlabeled tree $\tau \in \cX_{d-1}$ with probability $\GWmu_{d-1}(\tau)$; conditionally on the degree $\ell$ of the root, the number of copies $\{N_\tau\}$ of the unlabeled trees of depth $d-1$ present under the root is thus drawn according to the multinomial distribution of parameters $(\ell;\{\GWmu_{d-1}(\tau)\})$. When averaging such a multinomial distribution with $\ell$ drawn from a Poisson distribution $\Poi(\mu)$ one obtains a family of independent Poisson random variables with parameters $\{\mu \GWmu_{d-1}(\tau)\}$, hence:} 
\begin{definition}[Galton-Watson trees with Poisson offspring]\label{def:GW_trees}
	Let $\mu>0$. For $d=0$, $\GWmu_d$ is the Dirac mass at the trivial tree $\bigcdot \in \cX_0$. For $d \geq 1$, a tree $t = \{N_{\tau} \}_{\tau \in \cX_{d-1}} \sim \GWmu_d$ is sampled as follows: for all $\tau \in \cX_{d-1}$, $N_{\tau} \sim \Poi(\mu \GWmu_{d-1}(\tau))$ independently from everything else. Note that since the Poisson variables are independent, we have
	\begin{equation*}
		\sum_{\tau \in \cX_{d-1}} N_{\tau} \sim \Poi\left( \mu \sum_{\tau \in \cX_{d-1}} \GWmu_{d-1}(\tau) \right) = \Poi(\mu)
	\end{equation*}
	which is a.s. finite. Hence, we have $t \in \cX_d$ a.s.
	
	Throughout the study, we only consider Galton-Watson trees with Poisson offspring, which we will refer to as Galton-Watson trees for brevity.
\end{definition}

\begin{definition}[Null model $\dPl_d$]\label{def:null_model}
	The null distribution $\dPl_d$ on $\cX_d \times \cX_d$ of parameter $\lambda>0$ is  defined as the product $\GWl_{d} \otimes \GWl_{d}$: under the null model, the two trees are independent Galton-Watson trees with offspring $\Poi(\lambda)$.
\end{definition}

{We proceed with the definition of the correlated model of Galton-Watson trees, with marginal offspring $\Poi(\lambda), \lambda >0$ and correlation $s \in [0,1]$, which describes the local exploration of an aligned pair of correlated \ER graphs with these parameters~\cite{piccioli2021aligning}. For labeled trees, we recursively construct a colored (or multi-type) tree up to depth $d$ where each vertex has an independent number of offsprings linked to it by a blue (resp. red, bicolored) edge  drawn as independent $\Poi(\lambda(1-s))$ (resp. $\Poi(\lambda(1-s))$, $\Poi(\lambda s)$) random variables. One then obtains a pair of trees by including in the first (resp. second) one the blue and bicolored (resp. the red and bicolored) edges, and by keeping only the connected component of the root. This pair of trees are thus two labeled correlated Galton-Watson trees. The induced distribution for unlabeled trees, denoted $\dPls_d$, is deduced with the same reasoning as the one that led to Definition~\ref{def:GW_trees}, and formally defined below.} 

\begin{definition}[Correlated model $\dPls_d$]\label{def:correlated_model}
	The correlated model $\dPls_d$ on $\cX_d \times \cX_d$ with parameters $\lambda>0$ and $s \in [0,1]$ is defined as follows. For $d=0$, $\dPls_{0}= \dPl_0$. 
	For $d \geq 1$, a pair of trees $(t,t') = (\{N_{\tau} \}_{\tau \in \cX_{d-1}}, \{N'_{\tau} \}_{\tau \in \cX_{d-1}}) \sim \dPls_d$ is sampled as:
	\begin{equation}\label{eq:def_model_1}
		N_{\tau} := \Delta_\tau + \sum_{\tau' \in \cX_{d-1}} M_{\tau, \tau'} \quad \mbox{and} \quad
		N'_{\tau'} := \Delta'_{\tau'} + \sum_{\tau \in \cX_{d-1}} M_{\tau, \tau'} \, ,
	\end{equation} where $\{\Delta_{\tau}\}$, $\{\Delta'_{\tau}\}$ and $\{M_{\tau, \tau'}\}$ are families of independent random variables of Poisson laws with parameters $\lambda (1-s) \GWl_{d-1}(\tau)$ for the first two, and $\lambda s \dPls_{d-1}(\tau,\tau')$ for the last one. Note that for all $d$, $\dPls_d = \dPl_d$ if $s=0$.
\end{definition}

We will denote by $\dEl_d$ (respectively $\dEls_d$) the expectation {under} $\dPl_d$ (respectively $\dPls_d$).

\begin{remark}
 {The correlated model $\dPls_d$ defined in Definition \ref{def:correlated_model} is equivalent to the one of \cite{GMLTrees2021journal} formulated in terms of random augmentations of Galton-Watson trees.}
\end{remark}

\subsection{Main results}
The hypothesis test of interest can be formalized as follows: given the observation of a pair of (rooted, unlabeled, finite) trees $(t,t')$ in $\cX_d \times \cX_d$, we want to test
\begin{equation}
	\label{eq:test_hypotheses_p}
	\cH_0 = \mbox{"$t,t'$ are drawn under $\dPl_d$"} \quad \mbox{versus} \quad \cH_1 = \mbox{"$t,t'$ are drawn under $\dPls_d$"}.
\end{equation} 

\begin{remark}\label{remark:types_de_tests}
	In statistical detection theory, commonly considered asymptotic properties of tests are
	\begin{itemize}
		\item \emph{strong detection}, i.e. tests $\cT_d: \cX_d \times \cX_d \to \left\lbrace 0,1 \right\rbrace$  that verify
		\begin{equation*}
			\underset{d \to \infty}{\lim} \left[\dPl_d\left( \cT_d(t,t') = 1 \right) + \dPls_d\left( \cT_d(t,t') = 0 \right)\right] = 0,
		\end{equation*}
		\item \emph{weak detection}, i.e. tests $\cT_d: \cX_d \times \cX_d \to \left\lbrace 0,1 \right\rbrace$ that verify
		\begin{equation*}
			\underset{d \to \infty}{\limsup} \left[\dPl_d\left( \cT_d(t,t') = 1 \right) + \dPls_d\left( \cT_d(t,t') = 0 \right)\right] <1 \, .
		\end{equation*}
	\end{itemize} In words, strong detection corresponds to correctly discriminating with high probability between $\dPl_d$ and $\dPls_d$, whereas weak detection corresponds to strictly outperforming random guessing. It is well-known that the likelihood ratio test achieves the minimal value of $\dPl_d\left( \cT_d(t,t') = 1 \right) + \dPls_d\left( \cT_d(t,t') = 0 \right)$, which is the sum of type I and type II error, and that this minimal value is given by $1 - \DTV(\dPl_d,\dPls_d)$, where $$\DTV(\dPl_d,\dPls_d) := \frac{1}{2} \sum_{(t,t') \in \cX_d^2} |\dPl_d(t,t') - \dPls_d(t,t')| $$ denotes the total variation distance
	between $\dPl_d $ and $\dPls_d$. Hence, strong detection (resp. weak detection) holds if and only if $\, \DTV(\dPl_d,\dPls_d) \underset{d \to \infty}{\longrightarrow} 1$ (resp. $\underset{d \to \infty}{\liminf} \, \DTV(\dPl_d,\dPls_d) > 0$).
	
	We now argue that these are not the right notions for our problem. For this, consider the event
	$$ B_d := \set{(t,t')=(\bigcdot,\bigcdot)} \, .$$
	Note that for all $d>0$, $\dPl_d(B_d)=e^{-2\lambda}$ and $\dPls_d(B_d)=e^{-2\lambda+\lambda s}$.
	\begin{itemize}
		\item First, let us prove that strong detection never holds. For all $d>0$,
		\begin{flalign*}
			\DTV(\dPl_d,\dPls_d) & = \frac{1}{2}|\dPl_d(B_d) - \dPls_d(B_d)| + \frac{1}{2} \sum_{(t,t') \in \cX_d^2 \setminus \set{(\bigcdot,\bigcdot)}} |\dPl_d(t,t') - \dPls_d(t,t')| \\
			& = \frac{1}{2}(e^{-2\lambda+\lambda s}-e^{-2\lambda}) + \frac{1}{2} \sum_{(t,t') \in \cX_d^2 \setminus \set{(\bigcdot,\bigcdot)}} |\dPl_d(t,t') - \dPls_d(t,t')| \\
			& \leq \frac{1}{2}(e^{-2\lambda+\lambda s}-e^{-2\lambda}) + \frac{1}{2}(1-\dPl_d(B_d)) +  \frac{1}{2}(1-\dPls_d(B_d)) \\
			& \leq 1 - e^{-2\lambda} \, .
		\end{flalign*} This bound is uniform in $d$ and shows that strong detection never holds.

		\item Second, weak detection is always achievable as soon as $s>0$: indeed, one has 
		\begin{flalign*}
			\DTV(\dPl_d,\dPls_d) & \geq \frac{1}{2}|\dPl_d(B_d) - \dPls_d(B_d)| \\
			& \geq \frac{1}{2} e^{-2\lambda}(e^{\lambda s}-1) \, ,
		\end{flalign*} this uniform bound being positive as soon as $s>0$, so that weak detection holds.
		
	\end{itemize}
	Finally, a test of tree correlation yields efficient algorithms for graph alignment in the associated sparse correlated  \ER model if it achieves a positive power (non-vanishing alarm detection) and a vanishing type I (false alarm) error. Indeed the candidate vertex pairs returned by the algorithm will then contain i) a non-negligible fraction of correctly matched pairs by the first property, and ii) a negligible fraction of incorrectly matched pairs by the second property.

\end{remark}

We thus focus on the existence of a \textit{one-sided test}, that is a test $\cT_d: \cX_d \times \cX_d \to \left\lbrace 0,1 \right\rbrace$ such that $\cT_d$ chooses hypothesis $\cH_0$ under $\dPl_d$ with probability $1-o(1)$, and chooses $\cH_1$ under $\dPls_d$ with non-vanishing probability. According to the Neyman-Pearson Lemma, optimal tests are based on the \emph{likelihood ratio} $L_d$ of the distributions under the distinct hypotheses $\dPls_{d}$ and $\dPl_{d}$, given for a pair of trees $(t,t')$ by
\begin{equation*}
	L_d(t,t') := \frac{\dPls_d(t,t')}{\dPl_d(t,t')} \, .
\end{equation*} 

Note that provided $\lambda >0$, $\dPl_d(t,t')>0$ for all $t,t' \in \cX_d$, so that there is no division by zero in the above definition.

We define 
\begin{equation}\label{eq:def_KLd}
	\KLls_d := \KL (\dPls_d\Vert\dPl_d)= \dEls_d \left[ \log(L_d) \right] \, ,
\end{equation} with $\KL$ denoting the Kullback-Leibler divergence.

The following Lemma gives an easy necessary condition for feasibility of one-sided detection. It is substantially the same as Proposition 3.2 in \cite{GMLTrees2021journal}. We however give a proof hereafter for the sake of {self-consistency}.
\begin{lemma}\label{lemma:one_sided_implies_divergence}
	If $\lambda, s$ are such that there exists a one-sided test, then $ \KLls_d \underset{d \to \infty}{\longrightarrow} + \infty$. 
\end{lemma}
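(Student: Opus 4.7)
The plan is to argue by contraposition via the data-processing inequality for the Kullback--Leibler divergence, and reduce the problem to an elementary computation for Bernoulli laws.

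Suppose $\cT_d : \cX_d \times \cX_d \to \{0,1\}$ is a one-sided test, and set $q_d := \dPl_d(\cT_d(t,t')=1)$ and $p_d := \dPls_d(\cT_d(t,t')=1)$. By definition of a one-sided test, $q_d \to 0$ as $d \to \infty$, while $\liminf_d p_d \geq \alpha$ for some $\alpha >0$. Since $\cT_d$ is a (deterministic) measurable function from $\cX_d^2$ to $\{0,1\}$, the data-processing inequality for the KL divergence yields, for each $d$,
\begin{equation*}
\KLls_d \;=\; \KL\!\left(\dPls_d \,\Vert\, \dPl_d\right) \;\geq\; \KL\!\left(\Ber(p_d) \,\Vert\, \Ber(q_d)\right) \;=\; p_d \log \frac{p_d}{q_d} \,+\, (1-p_d) \log \frac{1-p_d}{1-q_d}.
\end{equation*}

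The next step is to show that the right-hand side diverges. For $d$ large enough, $p_d \geq \alpha/2$ and $q_d \leq \alpha/4$, so the first term satisfies
\begin{equation*}
p_d \log \frac{p_d}{q_d} \;\geq\; \frac{\alpha}{2} \, \log \frac{\alpha}{2 q_d} \;\xrightarrow[d \to \infty]{}\; +\infty,
\end{equation*}
since $q_d \to 0$. The second term is bounded below independently of $d$: indeed, $(1-p_d)\log(1-p_d) \geq -e^{-1}$, and $-(1-p_d)\log(1-q_d) \geq 0$ because $q_d \in [0,1)$. Combining these two bounds gives $\KLls_d \to \infty$, which is what we wanted.

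There is no real obstacle in this proof: the only tools needed are the data-processing inequality for KL (applied to the binary partition induced by $\cT_d$) and the elementary asymptotic behaviour of $\KL(\Ber(p)\|\Ber(q))$ when $p$ stays bounded away from $0$ and $q \to 0$. The statement is therefore really a routine ``soft'' converse that justifies focusing on $\KLls_d$ in the sequel; the substantive work lies in the subsequent analysis of when $\KLls_d$ is in fact bounded or unbounded.
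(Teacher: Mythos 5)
Your proof is correct and is essentially the same as the paper's: the first inequality in the paper's proof, stated as "elementary properties of the Kullback--Leibler divergence", is precisely the data-processing inequality applied to the binary partition $\{A_d, A_d^c\}$, i.e. the event $A_d := \cT_d^{-1}(1)$ plays the role of your test $\cT_d$, and the remaining manipulations (absorb the bounded remainder, let the $-\log q_d$ term blow up) match yours up to an immaterial choice of elementary bound ($-\log 2$ via $g(x)=x\log x+(1-x)\log(1-x)$ there, $-e^{-1}$ via $x\log x$ plus dropping a nonnegative term here).
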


\begin{proof}[Proof of Lemma \ref{lemma:one_sided_implies_divergence}]
	Assume that there exists a one-sided test. Then for every $d \geq 0$ there is an event $A_d \subset \cX_d^2$ such that $\dPl_d(A_d) \underset{d \to\infty}{\longrightarrow} 0$ and $\eps := \liminf_{d \to\infty} \dPls_d(A_d) >0$. Elementary properties of the Kullback-Leibler divergence entail
	\begin{align*}
		\KLls_d & \geq \dPls_d(A_d) \log\frac{\dPls_d(A_d)}{\dPl_d(A_d)}  + (1-\dPls_d(A_d)) \log\frac{1-\dPls_d(A_d)}{1-\dPl_d(A_d)} \\
		& = - \dPls_d(A_d)\log \dPl_d(A_d)   + \dPls_d(A_d) \log \dPls_d(A_d) \\ & \quad \quad + (1-\dPls_d(A_d))\log(1-\dPls_d(A_d)) - \underbrace{(1-\dPls_d(A_d))\log(1-\dPl_d(A_d))}_{\leq 0} \\
		& \geq - \dPls_d(A_d)\log \dPl_d(A_d) + g(\dPls_d(A_d)) \, ,
	\end{align*} 
	where for $x \in [0,1]$, $g$ is defined by $g(x) :=  x \log(x) + (1-x) \log(1-x)$. Function $g$ is minimal at $x=1/2$ and $g(1/2)=-\log(2)$, which gives the final bound $$\liminf_{d \to\infty}  \KLls_d \geq \eps \lim_{d \to\infty}(  - \log \dPl_d(A_d) )- \log 2 = + \infty \, .$$
\end{proof}
We now state the main results of this paper. Let $\alpha$ be the Otter constant introduced in Proposition \ref{prop:otter}.

\begin{theorem}[Negative result]\label{thm:negative_result}
	If $s \leq \sqrt{\alpha}$, then ${\limsup_{d\to\infty} \KL(\dPls_d \, || \, \dPl_d) < \infty}$ for all $\lambda>0$. Hence, in view of Lemma \ref{lemma:one_sided_implies_divergence}, one-sided detection is impossible.
\end{theorem}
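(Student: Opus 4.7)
The plan hinges on Jensen's inequality applied to the concave $\log$, which yields
\begin{equation*}
\KLls_d \;=\; \dEls_d[\log L_d] \;\leq\; \log \dEls_d[L_d] \;=\; \log \dEl_d[L_d^2].
\end{equation*}
It thus suffices to show that the second moment $M_d := \dEl_d[L_d^2]$ stays bounded uniformly in $d$ whenever $s \leq \sqrt{\alpha}$; combined with Lemma \ref{lemma:one_sided_implies_divergence}, this rules out one-sided testing.

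First I would derive a recursive expression for $L_d$ reflecting the branching structure of Definition \ref{def:correlated_model}. Writing $\dPls_d(t,t')$ by summing out the latent pairing variables $\{M_{\tau,\tau'}\}$ against their Poisson joint densities, then dividing by the factorized $\dPl_d(t,t')$, one can reorganize $L_d(t,t')$ as a weighted sum over partial matchings $\sigma$ between the root children of $t$ and those of $t'$: each matched pair $(u,u')$ should contribute a factor proportional to $s\, L_{d-1}(t_u, t'_{u'})$, and unmatched children should absorb into combinatorial prefactors depending only on $(\lambda,s)$ and the root child counts. This is the natural tree analogue of the message-passing recursion already exploited in \cite{GMLTrees2021journal,piccioli2021aligning}.

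Next I would square this expansion and take expectation under $\dPl_d = \GWl_d \otimes \GWl_d$. Under the null, the root children of $t$ and $t'$ are independent Galton-Watson subtrees, so the square of a sum over matchings $\sigma$ becomes a double sum over $(\sigma_1,\sigma_2)$ whose expectation factorizes over the involved subtree pairs. Using the Poisson moment generating function to evaluate the sums over child counts, one expects a tractable recursion of the form $M_d \leq F_{\lambda,s}(M_{d-1})$ for some increasing nonlinear operator $F_{\lambda,s}$, whose leading nontrivial contribution arises from configurations in which $\sigma_1$ and $\sigma_2$ pair root children into subtrees of the same unlabeled type $\tau\in\cX_{d-1}$.

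The crux of the argument, and the step I expect to be hardest, is to exhibit a uniform in $d$ bound $M_d \leq C(\lambda,s) < \infty$ whenever $s \leq \sqrt{\alpha}$. The diagonal contribution described above carries an aggregate weight proportional to $\sum_{\tau \in \cX_{d-1}} s^{2|\tau|}$ times Galton-Watson factors, which after reorganization reduces to series of the type
\begin{equation*}
\sum_{n\geq 1} A_{d-1,n}\,s^{2(n-1)} \;=\; \Phi_{d-1}(s^2) \;\leq\; \Phi(s^2).
\end{equation*}
By Proposition \ref{prop:otter}, $\Phi(s^2) \leq \Phi(\alpha) < \infty$ whenever $s^2 \leq \alpha$, precisely because the $n^{-3/2}$ decay in Otter's asymptotic $A_n \sim D\, n^{-3/2}\alpha^{-n}$ ensures $\Phi$ remains finite at the boundary of its disc of convergence. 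This boundary finiteness is what places the sharp threshold at $s = \sqrt{\alpha}$ rather than at some smaller value. Iterating the recursion for $M_d$ from $M_0 = 1$ against this uniform bound then yields $\limsup_{d\to\infty} M_d < \infty$, and hence $\limsup_{d\to\infty} \KLls_d < \infty$ by the Jensen bound.
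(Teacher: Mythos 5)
Your overall strategy is the right one, and it matches the paper's: apply Jensen's inequality to get $\KLls_d \leq \log \dEl_d[L_d^2]$, and then show that the second moment is bounded uniformly in $d$ by connecting it to the series $\sum_n A_n s^{2(n-1)}$, which converges for $s \leq \sqrt\alpha$ by Otter's asymptotic $A_n \sim D\,n^{-3/2}\alpha^{-n}$.

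However, there is a genuine gap at what you yourself flag as the crux: you never actually establish the identity (or even a bound) linking $\dEl_d[L_d^2]$ to $\Phi_d(s^2)$. You assert that after squaring a matching expansion of $L_d$, the "diagonal contribution" reduces to $\Phi_{d-1}(s^2)$, but you give no control over the off-diagonal contributions, and you postulate a recursion $M_d \leq F_{\lambda,s}(M_{d-1})$ without pinning down $F_{\lambda,s}$ or verifying that its iterates stay bounded (an increasing nonlinear recursion can easily diverge; boundedness must be argued). As it stands, the proposal reduces the theorem to an unproven claim about the second moment. The paper closes this gap by a different and cleaner route: Theorem~\ref{theorem:eigendecomposition} exhibits an orthogonal decomposition $L_d(t,t') = \sum_{\beta\in\cX_d} s^{|\beta|-1}\fl_{d,\beta}(t)\fl_{d,\beta}(t')$ with $\dEl_d[\fl_{d,\beta}\fl_{d,\beta'}]=\one_{\beta=\beta'}$, from which Corollary~\ref{corr:cyclic_moments} yields the \emph{exact} identity $\dEl_d[L_d^2] = \sum_{\beta\in\cX_d} s^{2(|\beta|-1)} = \Phi_d(s^2)$, with no off-diagonal terms to control and no recursion to iterate; boundedness then follows immediately from $A_{d,n}\leq A_n$ and Proposition~\ref{prop:otter}. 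Your combinatorial matching route might be made to work, but it would in effect have to reprove this orthogonality, and the argument you sketch does not do so.
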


{For this negative result, we will actually prove a slightly stronger result, see Section \ref{section:proof_negative_result}, namely that the $\chi^2$-distance between the two distributions, $\chi^2(\dPls_d \, || \, \dPl_d) := \dEls_d \left[ L_d \right]-1$, is bounded uniformly in $d$.}

\begin{theorem}[Positive result]\label{thm:positive_result}
	If $s > \sqrt{\alpha}$, then there exists $\lambda(s)>0$ such that for all $\lambda \geq \lambda(s)$, one-sided detection is feasible.
\end{theorem}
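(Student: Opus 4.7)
The plan is to exhibit an explicit second-moment-style statistic that separates the two hypotheses asymptotically when $s^2>\alpha$ and $\lambda$ is large. Let $N_\tau(t)$ denote the number of children of the root of $t$ whose subtree equals $\tau\in\cX_{d-1}$, and set $\mu_\tau:=\lambda\,\GWl_{d-1}(\tau)$, which is the common marginal mean of $N_\tau$ under both $\dPl_d$ and $\dPls_d$. I would work with the weighted covariance statistic
\begin{equation*}
S_d(t,t') \; := \; \sum_{\tau\in\cX_{d-1}} w_\tau\,\bigl(N_\tau(t)-\mu_\tau\bigr)\bigl(N_\tau(t')-\mu_\tau\bigr),
\end{equation*}
with non-negative weights $w_\tau$ to be optimized.

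First I would compute the first two moments of $S_d$. Under $\dPl_d$, the $\{N_\tau(t)\}_\tau$ are mutually independent $\Poi(\mu_\tau)$ variables, independent of those computed from $t'$, so $\dEl_d[S_d]=0$ and $\Var_{\dPl_d}[S_d]=\sum_\tau w_\tau^2\mu_\tau^2$. Under $\dPls_d$, the representation $N_\tau=\Delta_\tau+\sum_{\tau'}M_{\tau,\tau'}$ of Definition \ref{def:correlated_model} gives $\Cov_{\dPls_d}(N_\tau(t),N_\tau(t'))=\Var(M_{\tau,\tau})=\lambda s\,\dPls_{d-1}(\tau,\tau)$, hence $\dEls_d[S_d]=\lambda s\sum_\tau w_\tau\,\dPls_{d-1}(\tau,\tau)$. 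Cauchy--Schwarz then picks out the optimal weights $w_\tau\propto\dPls_{d-1}(\tau,\tau)/\mu_\tau^2$, and yields
\begin{equation*}
\frac{\dEls_d[S_d]^2}{\Var_{\dPl_d}[S_d]} \; = \; s^2\sum_{\tau\in\cX_{d-1}}\frac{\dPls_{d-1}(\tau,\tau)^2}{\GWl_{d-1}(\tau)^2}.
\end{equation*}

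The heart of the argument is then to show that this last sum diverges as $d\to\infty$ whenever $s^2>\alpha$ and $\lambda$ is large. I would stratify by $n:=|\tau|$ and combine three ingredients: Otter's bound $A_n\sim Dn^{-3/2}\alpha^{-n}$ on the number of topologies of size $n$; the Poisson--Galton--Watson formula for $\GWl_{d-1}(\tau)$, which factorizes as $\lambda^{n-1}e^{-\lambda n}$ times a combinatorial factor involving $|\Aut(\tau)|$; and the analogous recursive computation of $\dPls_{d-1}(\tau,\tau)$ via the "matched-Poisson" mass $M_{\tau,\tau}$, producing an additional factor of $s$ per vertex of $\tau$. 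Putting these together, each size-$n$ shell contributes to the sum a quantity of order $(s^2/\alpha)^n$ times slowly varying terms; the series therefore diverges precisely when $s^2>\alpha$, provided $\lambda$ is taken large enough for the Galton--Watson measure to populate deep trees, so that the $e^{-\lambda n}$ suppression does not overwhelm the combinatorial $\alpha^{-n}/|\Aut(\tau)|$ growth.

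To turn divergence of the signal-to-noise ratio into a one-sided test, I would further bound $\Var_{\dPls_d}[S_d]$. The off-diagonal covariances $\Cov_{\dPls_d}(N_\tau(t),N_{\tau'}(t'))=\lambda s\,\dPls_{d-1}(\tau,\tau')$ for $\tau\neq\tau'$ contribute additional terms, but these can be controlled by Cauchy--Schwarz against the diagonal ones, yielding $\Var_{\dPls_d}[S_d]=O(\dEls_d[S_d]^2)$. Chebyshev's inequality with threshold $T_d:=\dEls_d[S_d]/2$ then delivers $\dPl_d(S_d\geq T_d)\to 0$ and $\liminf_d\dPls_d(S_d\geq T_d)>0$, i.e.\ a one-sided test. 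The main obstacle is the precise asymptotic analysis of $\dPls_{d-1}(\tau,\tau)$ for typical large $\tau$, together with the tree-automorphism bookkeeping in the sum: it is exactly there that the balance between the "matching bonus" $s^{2n}$ and Otter's growth $\alpha^{-n}$ tips at $s=\sqrt{\alpha}$, and that the hypothesis "$\lambda$ sufficiently large" becomes unavoidable.
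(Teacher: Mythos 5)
Your proposed statistic
\begin{equation*}
S_d(t,t')=\sum_{\tau\in\cX_{d-1}} w_\tau\,\bigl(N_\tau(t)-\mu_\tau\bigr)\bigl(N_\tau(t')-\mu_\tau\bigr)
\end{equation*}
is a bilinear form in the centered counts that is \emph{diagonal} in the subtree-type index $\tau$. The Cauchy--Schwarz optimization you carry out is correct and indeed yields
\begin{equation*}
\sup_{w}\frac{\dEls_d[S_d]^2}{\Var_{\dPl_d}[S_d]}\;=\;s^2\sum_{\tau\in\cX_{d-1}}q_\tau^2,
\qquad q_\tau:=\frac{\dPls_{d-1}(\tau,\tau)}{\GWl_{d-1}(\tau)},
\end{equation*}
but the "heart of the argument", the claim that $\sum_\tau q_\tau^2\to\infty$ for $s^2>\alpha$ and $\lambda$ large, is where the proof fails. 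The heuristic that $q_\tau\approx s^{|\tau|}$ is simply wrong: already for the trivial tree, $q_\bigcdot=\dPls_{d-1}(\bigcdot,\bigcdot)/\GWl_{d-1}(\bigcdot)=e^{\lambda s-\lambda}$, which is exponentially small in $\lambda$, not $s^{1}$. Worse, for $d=2$ one can compute directly (or via the local CLT for the Poisson counts $\ell\approx\lambda\pm\sqrt\lambda$) that $q_\ell=\Theta(\lambda^{-1/2})$ for typical $\ell$, and the number of relevant $\ell$ is $\Theta(\sqrt\lambda)$, so $\sum_{\ell}q_\ell^2=\Theta(\lambda^{-1/2})\to 0$ as $\lambda\to\infty$. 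Your signal-to-noise ratio thus \emph{vanishes}, rather than diverges, in precisely the regime where the theorem operates.

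The underlying reason is structural. By restricting $W_{\tau,\tau'}$ to be diagonal, you only reward exact coincidences of subtree types in the two trees; as $\lambda\to\infty$ the space of subtree types effectively becomes continuous, and the probability of an exact coincidence vanishes. Had you allowed a general bilinear form $\sum_{\tau,\tau'}W_{\tau,\tau'}\widetilde N_\tau\widetilde N'_{\tau'}$, the Cauchy--Schwarz optimum would be $s^2\sum_{\tau,\tau'}\dPls_{d-1}(\tau,\tau')^2/(\GWl_{d-1}(\tau)\GWl_{d-1}(\tau'))=s^2\,\dEls_{d-1}[L_{d-1}]=s^2\,\Cs_{d-1,2}$, which by Corollary~\ref{corr:cyclic_moments} and Otter's formula does diverge precisely when $s^2>\alpha$ (and is $\lambda$-independent). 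This is exactly the object the paper works with; in fact the paper's depth-propagation statistic $Z_S=\sum_{(\tau,\tau')\in S}\widetilde N_\tau\widetilde N'_{\tau'}$ in Proposition~\ref{prop:bounding_LR_by_induction} is such a bilinear form with $W=\one_S$ an indicator, and its first and fourth moments are controlled in Lemma~\ref{lemma_bounds_Z} in order to make the Chebyshev step rigorous.

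Even with the correct bilinear form, a single two-moment argument is not enough by itself: the paper first establishes the Gaussian limit $\KLls_d\to\KLs_d=\tfrac12\log\Cs_{d,2}$ via Lemma~\ref{lemma:cv_weak_dual} and Proposition~\ref{prop:gaussian_KL_ineq} (which is where ``$\lambda$ large'' enters), uses Lemma~\ref{lemma:initialize_lower_bound_on_KL} to extract an initial separating event $S$ at some fixed depth $d_0$, and then propagates $S$ from depth $d$ to $d+1$ via the $Z_S$ statistic. If you want to salvage your outline, the first move is to drop the diagonal restriction on $W$ and tie the SNR to $\Cs_{d,2}$ rather than to the diagonal sum $\sum_\tau q_\tau^2$; the second is to replace the informal Otter/automorphism bookkeeping by the exact identity $\Cs_{d,2}=\Phi_d(s^2)$, which already contains the $\alpha^{-n}$ growth.
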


For the positive result, we are going to look at the high degree limit $\lambda \to \infty$: this intuitively simplifies the problem, since the Poisson degree distribution becomes Gaussian. In the case where $d>1$, if $(t,t')$ are viewed individually as trees of depth $d$, the notion of 'gaussianity' is however less clear. The strategy is hence to operate a 'change of basis' in which the limiting object will be easier to define and more appealing, see Section \ref{subsection:gaussian_approx}.

\begin{remark}
	We present on the left panel of Figure \ref{fig:phase_diagram_new} a sketch of the conjectured phase diagram for the problem of one-sided detection in random trees, dividing the plane of parameters $(\lambda,s)$ in regions where this task is possible or not. The results of the present paper show that $s \leq \sqrt{\alpha}$ lies in the impossible phase, while for any $s > \sqrt{\alpha}$ the possible phase appears for sufficiently large $\lambda$. We expect more precisely that increasing the correlation parameter $s$ can only bring from the impossible to the possible phase, in other words that there exists a phase transition line $s_{\rm c}(\lambda)$, drawn on the left panel of Fig.~\ref{fig:phase_diagram_new}, such that the inference task is feasible (resp. unfeasible) if $s<s_{\rm c}(\lambda)$ (resp. $s>s_{\rm c}(\lambda)$). It was shown in \cite{GMLTrees2021journal} that the regime $\lambda s <1$ is contained in the impossible phase, and that one-sided tests exist in some parts of the regime $\lambda s >1$; for $\lambda \in [1,1.178]$ these upper and lower bounds coincide, implying that $s_{\rm c}(\lambda) = 1/\lambda$ in some interval of $ \lambda$ slightly above 1. Since the transition line cannot cross $\sqrt{\alpha}$, there exists a value $\lambda_{\rm t} \geq 1.178$ (marked with a black dot on the figure) such that $s_{\rm c}(\lambda) = 1/\lambda$ when $\lambda \in [1,\lambda_{\rm t}]$, and 
	$s_{\rm c}(\lambda) > 1/\lambda$ for $\lambda > \lambda_{\rm t}$.
	
	In the right panel of the figure we consider instead the problem of partial recovery of the hidden permutation in random graph alignment. We divide the plane of parameters in three regions, the impossible phase where information-theoretic bounds forbid the recovery of a non-vanishing fraction of aligned vertices, the easy phase in which an algorithm achieves this goal in a time growing polynomially with the number of vertices, and the hard phase in which this task is information-theoretically possible but no efficient algorithm is known for it. The negative and positive results of \cite{ganassali2021impossibility} and \cite{Ding22} respectively imply that the boundary of the impossible phase is the curve $\lambda s=1$. According to Theorem~\ref{theorem:MPalign} the regime $s>s_{\rm c}(\lambda)$ where one-sided detection in the tree problem is feasible is included in the easy phase of the graph recovery problem. Note that \cite{GMLTrees2021journal} conjectures that this inclusion is actually an equality, i.e. that if one-sided correlation detection in trees fails, then no polynomial-time algorithm achieves partial graph alignment. Under this conjecture the hard phase would fill the remaining of the phase diagram, i.e. the region $\left\lbrace (\lambda, s) : \lambda s>1, s < s_{\rm c}(\lambda) \right\rbrace$, which from Theorem \ref{thm:negative_result} contains the domain $\left\lbrace (\lambda, s) : \lambda s>1, s \leq \sqrt{\alpha} \right\rbrace$.
	
\end{remark}



\begin{figure}
	\begin{center}
		\begin{tikzpicture}[scale=.8]
			\draw[-latex] (0,0) -- (0,3.5) node [left] {$s$};
			\draw[-latex] (0,0) -- (6.5,0) node [below] {$\lambda$};
			\draw (6.3,3) -- (0,3) node [left] {$1$};
			\draw (1.5,3) to[out=-65,in=120] (1.8,2.45);
			\draw (1.8,2.45) to[out=-30,in=180] (6.,1.6);
			\draw (1.8,1) node {impossible};
			\draw (4.2,2.4) node {possible};
			\draw[dashed] (5,1.55)--(6.7,1.55) node [right] {$\sqrt{\alpha}$};
			\draw (1.5,3) node [above] {$1$};
			\fill[black] (1.8,2.45) circle (1.2pt);
			\begin{scope}[xshift=10cm]
				\draw[-latex] (0,0) -- (0,3.5) node [left] {$s$};
				\draw[-latex] (0,0) -- (6.5,0) node [below] {$\lambda$};
				\draw (6.3,3) -- (0,3) node [left] {$1$};
				\draw (1.5,3) to[out=-65,in=175] (6.3,.2);
				\draw (1.8,2.45) to[out=-30,in=180] (6.,1.6);
				\draw (1.8,1) node {impossible};
				\draw (4.2,2.4) node {easy};
				\draw (5.5,1) node {hard};
				\draw[dashed] (5,1.55)--(6.7,1.55) node [right] {$\sqrt{\alpha}$};
				\draw (1.5,3) node [above] {$1$};
				\fill[black] (1.8,2.45) circle (1.2pt);
			\end{scope}
		\end{tikzpicture}
	\end{center}
	\caption{A sketch of the conjectured phase diagram for (left) the one-sided hypothesis testing problem on trees, (right) partial recovery for random graph alignment.}
	\label{fig:phase_diagram_new}
\end{figure}



\section{The impossible phase for $s \leq \sqrt{\alpha}$}








\subsection{Decomposition of the likelihood ratio}

The key result of this section is the following

\begin{theorem}[Decomposition of $L_d$]\label{theorem:eigendecomposition}
	For all $\lambda>0, d \geq 0$, there exists a collection $\{\fl_{d,\beta} \}_{\beta \in \cX_d}$ with $\fl_{d,\beta} : \cX_d \to \dR$, such that for all $s \in [0,1]$, 
	
	\begin{equation}\label{eq:theorem:eigendecomposition}
		\forall t,t' \in \cX_d, \; L_d(t,t') = \sum_{\beta \in \cX_d} s^{|\beta| -1} \fl_{d,\beta}(t) \fl_{d,\beta}(t') \, . 
	\end{equation}
	Moreover, the $\fl_{d,\beta}$ are independent of $s$ and verify the following properties:
	\begin{itemize}
		\item Value at the trivial tree: 
		\begin{equation}\label{eq:theorem:eigendecomposition_trivial_tree}
			\forall t \in \cX_d, \; \fl_{d, \bigcdot}(t) = 1 \, ,
		\end{equation}
		\item Orthogonality: 
		\begin{equation}\label{eq:theorem:eigendecomposition_orthogonality}
			\forall \beta, \beta' \in \cX_d, \; \sum_{t \in \cX_d} \GWl_d(t) \fl_{d, \beta}(t) \fl_{d, \beta'}(t) = \one_{\beta = \beta'} \, ,
		\end{equation}
		
		\begin{equation}\label{eq:theorem:eigendecomposition_orthogonality_2}
			\forall t, t' \in \cX_d, \; \sum_{\beta \in \cX_d}  \fl_{d, \beta}(t) \fl_{d, \beta}(t') = \frac{\one_{t = t'}}{\GWl_d(t)} \, .
		\end{equation}

		\item Limit of higher-order mixed moments: for $n \geq 2$, $d \geq 1$ and $\beta^{(1)} = \{\beta^{(1)}_\gamma\}_{\gamma \in \cX_{d-1}}$, $\ldots, \beta^{(n)} = \{\beta^{(n)}_\gamma\}_{\gamma \in \cX_{d-1}} \in \cX_{d}$, one has
		\begin{multline}\label{eq:theorem:eigendecomposition_mixed_products}
			\sum_{t \in \cX_{d}} \GWl_{d}(t) \fl_{d, \beta^{(1)} }(t) \cdots \fl_{d, \beta^{(n)} }(t) \underset{\lambda \to \infty}{\longrightarrow} \prod_{\gamma \in \cX_{d-1}}\sqrt{\prod_{i=1}^n \beta^{(i)}_\gamma!} \left[ x^{\beta^{(1)}_\gamma}_1 \cdots \, x^{\beta^{(n)}_\gamma}_n \right] e^{\sum_{1 \leq i <j \leq n}x_i x_j} \, .
		\end{multline}
	\end{itemize}
\end{theorem}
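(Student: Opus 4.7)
The plan is to proceed by induction on $d$, exploiting the Poisson point process structure of $\GWl_d$ on $\cX_{d-1}$. Under $\GWl_d$, the family $\{N_\tau\}_{\tau \in \cX_{d-1}}$ is a Poisson point process on the countable alphabet $\cX_{d-1}$ with intensity $\lambda\GWl_{d-1}$, which endows $L^2(\GWl_d)$ with a Poisson Fock-space structure: starting from any orthonormal basis of $L^2(\GWl_{d-1})$, the multivariate Charlier--Wick construction produces an ONB of $L^2(\GWl_d)$ indexed by multi-indices $\beta = \{\beta_\gamma\}_{\gamma \in \cX_{d-1}}$ with finite support, i.e.\ by elements of $\cX_d$. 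The plan is to \emph{define} $\{\fl_{d,\beta}\}_{\beta\in\cX_d}$ as this Charlier--Wick ONB built from the inductively given $\{\fl_{d-1,\gamma}\}_{\gamma\in\cX_{d-1}}$. The base case $d=0$ is immediate: $\fl_{0,\bigcdot}\equiv 1$ trivially satisfies all four properties.

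For the inductive step, the main identity \eqref{eq:theorem:eigendecomposition} will be obtained by a direct expansion of $L_d(t,t')$. Starting from Definition~\ref{def:correlated_model} and summing the joint Poisson density of $(\{\Delta_\tau\},\{\Delta'_{\tau'}\},\{M_{\tau,\tau'}\})$ over the hidden matching yields an expression of the form
\[
L_d(t,t') = e^{\lambda s}\sum_{\{M_{\tau,\tau'}\}} \prod_{\tau,\tau' \in \cX_{d-1}} \frac{(\lambda s\,\dPls_{d-1}(\tau,\tau'))^{M_{\tau,\tau'}}}{M_{\tau,\tau'}!}\,R(\{M\},t,t'),
\]
with $R$ collecting the factorial/power factors coming from the unmatched $\Delta$-contributions. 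Substituting the inductive decomposition $\dPls_{d-1}(\tau,\tau') = \GWl_{d-1}(\tau)\GWl_{d-1}(\tau')\sum_{\gamma}s^{|\gamma|-1}\fl_{d-1,\gamma}(\tau)\fl_{d-1,\gamma}(\tau')$, expanding each $M_{\tau,\tau'}$-th power as an ordered product indexed by $\gamma_1,\dots,\gamma_{M_{\tau,\tau'}}$, and regrouping the resulting multisum by the multi-set $\beta=\{\beta_\gamma\} \in \cX_d$ of labels that appear, each matched pair indexed by $\gamma$ contributes a factor $s\cdot s^{|\gamma|-1}=s^{|\gamma|}$; the total $s$-weight attached to $\beta$ is then $\prod_\gamma (s^{|\gamma|})^{\beta_\gamma} = s^{\sum_\gamma \beta_\gamma |\gamma|} = s^{|\beta|-1}$, using the recursive size identity $|\beta|=1+\sum_\gamma\beta_\gamma |\gamma|$. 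The inner sum over placements, for fixed $\beta$, factorizes mode by mode, and the generating-function (Mehler-type) identity for Charlier polynomials identifies it with the bilinear form $\fl_{d,\beta}(t)\fl_{d,\beta}(t')$. Properties \eqref{eq:theorem:eigendecomposition_trivial_tree}, \eqref{eq:theorem:eigendecomposition_orthogonality}, \eqref{eq:theorem:eigendecomposition_orthogonality_2} are built into the Charlier--Wick construction: $\fl_{d,\bigcdot}\equiv 1$ since the vacuum element of the Fock space is the constant $1$; orthonormality reduces to that of multivariate Charlier--Wick polynomials in a product-Poisson measure (using inductively the orthonormality of $\{\fl_{d-1,\gamma}\}$); and \eqref{eq:theorem:eigendecomposition_orthogonality_2} is its Parseval dual.

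The Gaussian-limit formula \eqref{eq:theorem:eigendecomposition_mixed_products} is then obtained as $\lambda\to\infty$ via the Poisson CLT. Introducing $Z_\gamma(t) := \lambda^{-1/2}\sum_{\tau}(N_\tau - \lambda\GWl_{d-1}(\tau))\fl_{d-1,\gamma}(\tau)$, the inductive orthonormality of $\{\fl_{d-1,\gamma}\}$ combined with a Lindeberg-type argument implies that for any finite family of indices $\gamma$, the vector $\{Z_\gamma\}$ converges in distribution under $\GWl_d$ to i.i.d.\ standard Gaussians, and the normalized Charlier--Wick polynomial of degree $k$ on mode $\gamma$ degenerates to $H_k(Z_\gamma)/\sqrt{k!}$ with $H_k$ the probabilist's Hermite polynomial. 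Uniform integrability for the $n$-fold product is ensured by polynomial moment bounds on Poissons together with the fact that only finitely many $\gamma$'s contribute nontrivially to any given $\beta^{(i)}$; the limit factorizes across $\gamma$-modes, and each factor is $\dE\bigl[\prod_{i=1}^n H_{\beta^{(i)}_\gamma}(Z)\bigr]/\sqrt{\prod_i \beta^{(i)}_\gamma!}$ for a single standard Gaussian $Z$. The standard identity $\dE\bigl[\prod_i H_{k_i}(Z)\bigr] = \prod_i k_i!\cdot [\prod_i x_i^{k_i}]\exp(\sum_{i<j}x_i x_j)$ then produces the right-hand side of \eqref{eq:theorem:eigendecomposition_mixed_products} exactly. \textbf{The main obstacle} is the combinatorial step in the induction: checking that, after substituting the inductive expansion of $L_{d-1}$ and regrouping by the multiset $\beta$, the matching-sum collapses \emph{exactly} to $\fl_{d,\beta}(t)\fl_{d,\beta}(t')$ per $\beta$. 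This requires careful bookkeeping of how matched and unmatched children jointly populate each $\gamma$-mode, together with absolute-convergence justifications for the series interchanges; it is essentially the reduction of the recursion for $L_d$ to Mehler's formula on Poisson Fock space.
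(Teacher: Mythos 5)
Your proposal identifies the right structure --- the $\fl_{d,\beta}$ are Charlier--Wick polynomials built inductively from $\{\fl_{d-1,\gamma}\}$ via the Poisson Fock-space structure of $\GWl_d$, and \eqref{eq:theorem:eigendecomposition} is a Mehler-type formula --- and this is indeed what the paper's construction amounts to. But as written the proposal has a gap precisely where you flag \emph{the main obstacle}: the verification that, after inserting the inductive decomposition of $\dPls_{d-1}$ and regrouping by the multiset $\beta$, the matching-sum collapses to $\fl_{d,\beta}(t)\fl_{d,\beta}(t')$ with the stated weight $s^{|\beta|-1}$. That is where essentially all of the work lives, and you do not carry it out. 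The paper avoids the direct matching-sum bookkeeping by computing the characteristic function $\hatdPls_{d+1}(k,k')$, substituting the depth-$d$ decomposition into the coupling exponential $\exp\bigl[\lambda s\sum_{t,t'}\dPls_d(t,t')(e^{ik_t}-1)(e^{ik'_{t'}}-1)\bigr]$, performing a multinomial reindexing by $\gamma\in\cX_{d+1}$, and inverting the Fourier transform, which produces the closed-form recursion \eqref{eq:recursive_expression_f} for $\fl_{d+1,\gamma}$. That recursion is the concrete form of your \emph{reduction to Mehler on Poisson Fock space}; without an explicit version of it, the proposal is a plan rather than a proof.

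There is also a genuine divergence in your treatment of \eqref{eq:theorem:eigendecomposition_mixed_products}: you propose a CLT $\{Z_\gamma\}\to$ i.i.d.\ standard Gaussians, a Charlier-to-Hermite degeneration of the Wick polynomials, and a uniform-integrability bound for the $n$-fold product, followed by the Hermite linearization identity. The paper does not invoke a CLT here; it computes the mixed moment exactly from \eqref{eq:recursive_expression_f}, obtaining a coefficient extraction from $\exp\bigl[\sum_{i<j}\sum_\beta x^{(i)}_\beta x^{(j)}_\beta+\eps_\lambda\bigr]$ in which the correction $\eps_\lambda$ consists of lower-depth mixed moments carrying a prefactor $\lambda^{1-p/2}$ for $p\ge 3$; the inductive hypothesis then forces $\eps_\lambda\to 0$, giving the limit directly. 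Your CLT route is conceptually illuminating and is consistent with the Hermite remark the paper makes \emph{after} the theorem, but it replaces one exact generating-function computation by two unproved prerequisites (the polynomial degeneration and the $n$-fold uniform integrability), so it would need more work to be made rigorous than the argument it is replacing.
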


\begin{remark}
	Note that the right hand side of \eqref{eq:theorem:eigendecomposition_mixed_products} simplifies for $n=2$ to $\one_{\beta^{(1)}=\beta^{(2)}}$; \eqref{eq:theorem:eigendecomposition_orthogonality} shows that for $n=2$ the equality in \eqref{eq:theorem:eigendecomposition_mixed_products} holds for all $\lambda$, not only in the limit $\lambda \to \infty$, which is nevertheless required for $n>2$. Note also that \eqref{eq:theorem:eigendecomposition_trivial_tree} combined with \eqref{eq:theorem:eigendecomposition_orthogonality} implies the following first moment condition: 
	\begin{equation*}
		\forall \beta \in \cX_d, \; \sum_{t \in \cX_d} \GWl_d(t) \fl_{d, \beta}(t) = \one_{\beta = \bigcdot}.
	\end{equation*}
\end{remark}

\begin{remark}
	Introducing the (infinite-dimensional) matrices $F$ and $D$ with indices in $\cX_d$ as $F(\beta,t) := \fl_{d,\beta}(t)$ and $D(\beta,\beta') := s^{|\beta|-1} \one_{\beta=\beta'} $ one can write \eqref{eq:theorem:eigendecomposition} as $L_d=F^T D F$, with $F$ depending on $\lambda$ but not on $s$, while the diagonal matrix $D$ depends on $s$ but not on $\lambda$. Furthermore, with $G(t,t'):=\GWl_d(t) \one_{t=t'}$ the orthogonality conditions \eqref{eq:theorem:eigendecomposition_orthogonality} and \eqref{eq:theorem:eigendecomposition_orthogonality_2} read $F G F^T = \one$ and $F^T F = G^{-1}$ respectively; with $\widehat{F}:=F G^{1/2}$ they become $\widehat{F} \widehat{F}^T = \widehat{F}^T \widehat{F} = \one$, these two relations would thus be equivalent for finite dimensional matrices. Rewriting \eqref{eq:theorem:eigendecomposition} as $G^{1/2} L_d G^{1/2} = \widehat{F}^{-1} D \widehat{F}$ reveals that we have achieved a diagonalization not exactly of $L_d$ but rather of $G^{1/2} L_d G^{1/2}$; with a slight abuse of vocabulary we shall call the $\fl_{d,\beta}$ "eigenvectors of $L_d$", keeping implicit this slight difference.
\end{remark}

\begin{remark}
	In~\cite{GMLTrees2021journal} a Markov chain on the space of trees was introduced, with a transition kernel that reads in the above notations $M(s):=L_d G$, emphasizing its dependency on the correlation parameter $s$; its matrix elements can be easily seen to be $M(s)(t,t')=\dPls_d(t'|t)$. The above results show that $M(s)=F^T D(s) F G = (FG)^{-1} D(s) FG$, i.e. that $M(s)$ can be diagonalized with eigenvectors depending only on $\lambda$, and eigenvalues depending only on $s$. The semi-group property $M(s) M(s')=M(s s')$ derived as Proposition 2.1 in~\cite{GMLTrees2021journal} follows directly from the properties of the diagonalization stated here, in particular the obvious multiplication rule $D(s) D(s')=D(s s')$.
\end{remark}

\begin{remark}\label{remark:charlier}
	One can view the $\fl_{d,\beta}$ as orthogonal polynomials on the space of unlabeled trees $\cX_d$, where the notion of orthogonality is with respect to the Galton-Watson measure $\GWl_d$ -- see \eqref{eq:theorem:eigendecomposition_orthogonality}. The proof below will indeed show that the dependency on $t=\{N_{\tau} \}_{\tau \in \cX_{d-1}}$ of $\fl_{d,\beta}(t)$ is polynomial in the entries $N_{\tau}$, with coefficients depending on the indexing tree $\beta$. 
	
	In particular the $\fl_{d, m}(\ell)$ for $d=1$, indexed by $m, \ell \in  \dN \simeq \cX_1 $, are given by
	\begin{equation*}
		\fl_{1, m}(\ell) := \sqrt{m!} [x^m] e^{-x \sqrt{\lambda}} \left( 1+\frac{x}{\sqrt{\lambda}}\right)^\ell \, ,
	\end{equation*} see equation \eqref{eq:fl1} in the proof. These functions are known as Charlier polynomials of degree $m$, and are orthogonal for the Poisson distribution. Theorem \ref{theorem:eigendecomposition} provides an infinite-dimensional extension of these polynomials, on trees of depth $d \geq 2$, that are orthogonal for the $\GWl_d$ distribution, consistent with $\GWl_1 \eqd \Poi(\lambda)$.
	Since under $\GWl_d$ the Poisson random variables $N_\tau$ are independent, a possible basis of orthogonal polynomials on $\cX_d$ could be built with products over $\tau \in \cX_{d-1}$ of Charlier polynomials of arguments $N_\tau$; it will be seen in the proof that the $\fl_{d,\beta}$ are not of this simple factorized form, because of the additional requirement that they are eigenvectors of $L_d$.
	
	Also, note that equation \eqref{eq:theorem:eigendecomposition} exhibits a duality between trees $t$ in $\cX_d$ and the trees $\beta \in \cX_d$. This duality turns out to be very helpful for analysis, as shown below, e.g. giving a nice space in which one can prove weak convergence results -- see Section \ref{subsection:gaussian_approx}.
\end{remark}

\begin{remark}
	The decomposition \eqref{eq:theorem:eigendecomposition} may look reminiscent of the low-degree polynomials approach to hypothesis testing problems (see e.g. \cite{Kunisky19,Mao21_counting}), in which the likelihood ratio is projected onto the subspace of restricted degree polynomials in the observations. This similarity hides some differences, in particular \eqref{eq:theorem:eigendecomposition} provides an exact expression of the likelihood ratio and not an approximation, and also the observations are here infinite-dimensional for $d\ge 2$, which makes the expansion basis less immediately apparent than for, say, the entries of the adjacency matrix of a graph. Nevertheless one can follow this idea to build approximate versions of the likelihood ratio by truncating the sum in \eqref{eq:theorem:eigendecomposition} to a well-chosen sub-family of trees. Preliminary numerical results suggest that it is possible in this way to construct tests that achieve one-sided detection in a non-empty part of the phase diagram, while being computable recursively in a more efficient fashion than the exact likelihood ratio.
\end{remark}

\begin{remark}
    {One can wonder if the results of Theorem \ref{theorem:eigendecomposition} can be extended to more general random correlated tree models, in particular to the local limit of the correlated random graph models described in~\cite{piccioli2021aligning}, and if the Otter's threshold $\sqrt{\alpha}$ on the correlation will play an universal role besides the Poisson Galton-Watson setting. These questions deserve further investigations, yet one should point out that Poisson distributions do enjoy some very specific properties that are crucially exploited in our computations and that do not generalize to arbitrary offspring distributions, for instance the fact that a Poisson random variable subsampled in a multinomial way gives rise to independent Poisson random variables. A related specificity of the \ER (for graphs) / Poisson (for their tree limit) setting is the possibility to clearly distinguish the parameter $s$, which controls the correlation between the two objects, from $\lambda$, which controls the marginal law of one of the two objects, in the sense that modifying $s$ does not affect the marginals. One can indeed obtain a pair of correlated \ER random graphs of parameters $(\lambda,s)$ by two independent subsamplings of the edges of a common parent random graph (keeping each edge independently with probability $s$), where the parent graph is a usual \ER random graph of average degree $\lambda/s$. A similar subsampling from a parent graph with an arbitrary (non-Poissonian) degree distribution (e.g. from the configuration model), dependent on $s$ and on other parameters, would yield a graph with a degree distribution that generically depends on $s$. The "correlation parameter" will thus also affect the marginal laws of the individual graphs, hence Otter's threshold $\sqrt{\alpha}$ may not have a particular meaning in this generic case.
    }
\end{remark}



\begin{proof}[Proof of Theorem \ref{theorem:eigendecomposition}]
	
	We will prove the decomposition \eqref{eq:theorem:eigendecomposition} as well as the properties \eqref{eq:theorem:eigendecomposition_trivial_tree}, \eqref{eq:theorem:eigendecomposition_orthogonality}, \eqref{eq:theorem:eigendecomposition_orthogonality_2} and \eqref{eq:theorem:eigendecomposition_mixed_products} by induction on $d$.\\
	
	\underline{\emph{Step 1: initialization at $d=1$}} The set $\cX_1$ of trees with depth at most 1 is in bijection with $\dN$, since such a tree $t$ is encoded by the number of children of its root, $\ell \in \dN$, in such a way that $|t|=\ell+1$. We will similarly represent the tree $\beta$ by the integer $m$, with $|\beta|=m+1$, and write $\fl_{1, m}(\ell)$ for the eigenvector parametrized in this way. Denote by $\hatdPls_1$ the characteristic function defined on $\dR^2$ by 
	$\hatdPls_1(k,k') := \dE\left[ e^{ik\ell + ik'\ell'}\right]$ where $(t,t') \sim \dPls_1$. Under this distribution $\ell = \Delta +M$ and $\ell'=\Delta' +M$ with $\Delta$, $\Delta'$ and $M$ three independent random variables of Poisson law with parameters $\lambda(1-s)$, $\lambda(1-s)$ and $\lambda s$ respectively, hence
	\begin{flalign*}
		\hatdPls_1(k,k') & = \exp\left[ \lambda(1-s) (e^{ik} + e^{ik'} - 2) + \lambda s (e^{i(k+k')}-1) \right]\\
		& = e^{\lambda(e^{ik}-1)} e^{\lambda(e^{ik'}-1)} \exp\left[ \lambda s (e^{ik}-1) (e^{ik'}-1) \right]\\
		& = \sum_{m \geq 0} s^m \frac{\lambda^m}{m!} (e^{ik}-1)^m e^{\lambda(e^{ik}-1)} (e^{ik'}-1)^m e^{\lambda(e^{ik'}-1)} = \sum_{m \geq 0} s^m \hatgl_{1,m}(k) \hatgl_{1,m}(k') \, , 
	\end{flalign*} with 
	\begin{flalign*}
		\hatgl_{1,m}(k) & := e^{- \lambda} \sqrt{\frac{\lambda^m}{m!}} e^{\lambda e^{ik}} (e^{ik}-1)^m
		= e^{- \lambda} \sqrt{m!} e^{\lambda e^{ik}} [x^m] e^{x\sqrt{\lambda}(e^{ik}-1)}
		\\ & = e^{- \lambda} \sqrt{m!}  [x^m] e^{-x\sqrt{\lambda}} e^{(\lambda + x\sqrt{\lambda})e^{ik}} \, .
	\end{flalign*} 
	
	We have an easy upper bound of the form $\left| \hatgl_{1,m}(k) \right| \leq \frac{C^m}{\sqrt{m!}}$, uniformly in $k$, which establishes the normal convergence of the series $\hatdPls_1(k,k')$ in the above. Hence, inverting the Fourier transform, we get
	\begin{flalign*}
		\dPls_1(t,t') & = \int_{[0,2 \pi]^2} \frac{dk dk'}{(2 \pi)^2} e^{-ik\ell - i k' \ell'} \hatdPls_1(k,k') = \sum_{m \geq 0} s^m \gl_{1,m}(\ell) \gl_{1,m}(\ell') \, ,
	\end{flalign*} with 
	\begin{flalign*}
		\gl_{1,m}(\ell) & := \int_{[0,2 \pi]} \frac{dk}{2 \pi} e^{-ik\ell} \hatgl_{1,m}(k) \\
		& = e^{- \lambda} \sqrt{m!}  [x^m] e^{-x\sqrt{\lambda}} \int_{[0,2 \pi]} \frac{dk}{2 \pi} e^{-ik\ell}   e^{(\lambda + x\sqrt{\lambda})e^{ik}} \\
		& = e^{- \lambda} \sqrt{m!}  [x^m] e^{-x\sqrt{\lambda}} \frac{(\lambda + x\sqrt{\lambda})^\ell}{\ell!}
		\, .
	\end{flalign*}
	We hence have that $L_1$ satisfies \eqref{eq:theorem:eigendecomposition} with
	\begin{flalign}\label{eq:fl1}
		\fl_{1, m}(\ell) := \sqrt{m!} [x^m] e^{-x \sqrt{\lambda}} \left( 1+\frac{x}{\sqrt{\lambda}}\right)^\ell \, .
	\end{flalign}
	
	Taking $m = 0$ in \eqref{eq:fl1} gives $\fl_{1,\bigcdot}(t) = 1$ and proves condition \eqref{eq:theorem:eigendecomposition_trivial_tree} at $d=1$. Let us now prove the orthogonality relations; note first that for all $m, m' \in \dN$,
	\begin{flalign}
		\sum_{\ell \geq 0} e^{- \lambda} \frac{\lambda^\ell}{\ell!}  \fl_{1, m}(\ell) \fl_{1, m'}(\ell) & = \sqrt{m! m'!} \sum_{\ell \geq 0} e^{- \lambda} \frac{\lambda^\ell}{\ell!}  [x^m y^{m'}] e^{-x \sqrt{\lambda} -y \sqrt{\lambda}} \left[\left( 1+\frac{x}{\sqrt{\lambda}}\right)\left( 1+\frac{y}{\sqrt{\lambda}}\right) \right]^\ell \nonumber \\
		& = \sqrt{m! m'!} [x^m y^{m'}] e^{-x \sqrt{\lambda} -y \sqrt{\lambda}}  \exp \left[ - \lambda + \lambda\left( 1+\frac{x}{\sqrt{\lambda}}\right)\left( 1+\frac{y}{\sqrt{\lambda}}\right) \right] \nonumber \\
		& = \sqrt{m! m'!} [x^m y^{m'}] e^{xy} = \one_{m=m'}\, , \label{eq:proof:orthogonality_d1}
	\end{flalign} which establishes \eqref{eq:theorem:eigendecomposition_orthogonality} for $d=1$. Previous computations are made rigorous by noticing that the formal series in \eqref{eq:fl1} has infinite radius of convergence, and appealing to Fubini's theorem. With the same arguments, a proof of  \eqref{eq:theorem:eigendecomposition_orthogonality_2} is obtained by writing
	\begin{flalign*}
		\fl_{1, m}(\ell) & = \sqrt{m!} [x^m] e^{-x \sqrt{\lambda}} \left( 1+\frac{x}{\sqrt{\lambda}}\right)^\ell \\
		& = \sqrt{m!} \ell! [x^m y^\ell] e^{-x \sqrt{\lambda} + y + \frac{x y}{\sqrt{\lambda}}} \\
		& = \frac{\ell!}{\sqrt{m!}} [y^\ell] e^y \left(\frac{y}{\sqrt{\lambda}} - \sqrt{\lambda} \right)^m \, ,
	\end{flalign*}
	hence
	\begin{flalign*}
		\sum_{m \geq 0} \fl_{1,m}(\ell) \fl_{1,m}(\ell') & = \ell!(\ell')! [x^\ell y^{\ell'}] e^{x+y} \sum_{m \geq 0}   \frac{1}{m!} \left(\frac{x}{\sqrt{\lambda}} - \sqrt{\lambda} \right)^m \left(\frac{y}{\sqrt{\lambda}} - \sqrt{\lambda} \right) ^m \\
		& = \ell!(\ell')! [x^\ell y^{\ell'}] e^{xy/\lambda +\lambda} = \frac{\one_{\ell = \ell'}}{e^{-\lambda} \lambda^\ell / \ell!} \, ,
	\end{flalign*}
	which proves \eqref{eq:theorem:eigendecomposition_orthogonality_2} for $d=1$.
	
	Generalizing the computation of \eqref{eq:proof:orthogonality_d1} to a product of $n \geq 2$ eigenvectors yields
	\begin{align}\label{eq:fl2}
		\sum_{\ell \geq 0} e^{- \lambda} \frac{\lambda^\ell}{\ell!} \fl_{1, m_1 }(\ell) \cdots \fl_{1, m_n }(\ell)  = \sqrt{\prod_{i=1}^{n} m_i!} \, \sum_{\ell \geq 0} e^{- \lambda} \frac{\lambda^\ell}{\ell!}  [x_1^{m_1} \, \cdots \, x_n^{m_n}] e^{-\sqrt{\lambda} \sum_{i=1}^{n}x_i}  \prod_{i=1}^{n} \left(1+\frac{x_i}{\sqrt{\lambda}}\right)^\ell \nonumber \\
		= \sqrt{\prod_{i=1}^{n} m_i!} \, [x_1^{m_1} \, \cdots \, x_n^{m_n}] \exp \left[- \lambda -\sqrt{\lambda} \sum_{i=1}^{n}x_i  + \lambda \prod_{i=1}^{n} \left(1+\frac{x_i}{\sqrt{\lambda}}\right) \right] \nonumber \\
		= \sqrt{\prod_{i=1}^{n} m_i!} \, [x_1^{m_1} \, \cdots \, x_n^{m_n}] \exp \left[ \sum_{1 \leq i < j \leq n} x_i x_j + \eps_{\lambda}(x_1,\ldots,x_n) \right] \, ,
	\end{align}with
	$$
	\eps_{\lambda}(x_1,\ldots,x_n) := \sum_{p=3}^{n} \lambda^{1-p/2} \sum_{1 \leq i_1 < \ldots < i_p \leq n} x_{i_1} \cdots x_{i_p} \ . 
	$$
	The terms corresponding to $[x_1^{m_1} \, \cdots \, x_n^{m_n}]$ in the expansion of $\exp \left[ \sum_{1 \leq i < j \leq n} x_i x_j + \eps_{\lambda}(x_1,\ldots,x_n) \right]$ 
	to which $\eps_{\lambda}(x_1,\ldots,x_n) $ contributes are in finite number (independently of $\lambda$) and are all of order $O(\lambda^{-1/2})$. Hence, taking $\lambda \to \infty$, the property \eqref{eq:theorem:eigendecomposition_mixed_products} is proved for $d=1$ in \eqref{eq:fl2}.
	\\

	\underline{\emph{Step 2: induction from $d$ to $d+1$}} We will now assume that all the stated properties of the decomposition of $L_d$ have been proven, and show that they hold true for $L_{d+1}$. Let us thus consider a pair of random trees in $\cX_{d+1}$ sampled from the correlated model given in Definition \ref{def:correlated_model}, with $N,N' \in \dN^{\cX_d}$ their corresponding vector representations. Given $k,N \in \dR^{\cX_d}$ we shall write $k \cdot N := \sum_{\beta \in \cX_d} k_{\beta} N_{\beta}$ (in all the occurences of this notation only a finite number of terms are non-vanishing, the sum is thus well-defined). The characteristic function of $\dPls_{d+1}$ is defined as $\hatdPls_{d+1}(k,k') := \dE\left[ e^{ik \cdot N + ik' \cdot N'}\right]$ and reads
	\begin{flalign}\label{eq:recursive_eigen_step1}
		& \hatdPls_{d+1}(k,k')  = \exp \left[ \lambda(1-s) \sum_{t \in \cX_d} \GWl_{d}(t)(e^{i k_t} + e^{i k'_t} - 2) + \lambda s \sum_{t,t' \in \cX_d} \dPls_{d}(t,t') (e^{i k_t + i k'_{t'}} -1) \right] \nonumber \\
		& = e^{\lambda \sum_{t \in \cX_d} \GWl_{d}(t)(e^{i k_t} -1) + \lambda \sum_{t \in \cX_d} \GWl_{d}(t)(e^{i k'_t} -1)} \exp \left[ \lambda s \sum_{t,t' \in \cX_d} \dPls_{d}(t,t') (e^{i k_t} -1)(e^{i k'_{t'}} -1) \right] \nonumber \\
		& = e^{\lambda \sum_{t \in \cX_d} \GWl_{d}(t)(e^{i k_t} -1) + \lambda \sum_{t \in \cX_d} \GWl_{d}(t)(e^{i k'_t} -1)}   \sum_{m \geq 0}s^m \frac{\lambda^m}{m!} \underbrace{\left( \sum_{t,t' \in \cX_d} \dPls_{d}(t,t') (e^{i k_t} -1)(e^{i k'_{t'}} -1)\right)^m}_{(i)} 
		\, . &&
	\end{flalign} Let us use the decomposition \eqref{eq:theorem:eigendecomposition} at step $d$ in $(i)$. Denoting $\gl_{d,\beta}(t) := \fl_{d,\beta}(t) \GWl_d(t)$, this gives 
	\begin{flalign*}
		(i) & =\left(\sum_{\beta \in \cX_d} s^{|\beta| -1} \left[ \sum_{t  \in \cX_d}\gl_{d,\beta}(t)(e^{i k_t} -1) \right] \left[ \sum_{t  \in \cX_d}\gl_{d,\beta}(t) (e^{i k'_{t}} -1)\right]\right)^m
		\, \\
		& = \sum_{\gamma = (\gamma_\beta)_{\beta \in \cX_d}} {m!} s^{- m + \sum_{\beta \in \cX_d}\gamma_{\beta}|\beta| } \\
		& \quad \quad \quad \times \prod_{\beta \in \cX_d} \frac{1}{\gamma_{\beta}!} \left[ \sum_{t  \in \cX_d}\gl_{d,\beta}(t)(e^{i k_t} -1) \right]^{\gamma_\beta} \left[ \sum_{t  \in \cX_d}\gl_{d,\beta}(t) (e^{i k'_{t}} -1)\right]^{\gamma_\beta} \one_{\sum_{\beta \in \cX_d}\gamma_{\beta} = m} \, ,
	\end{flalign*} 
	where we used a multinomial expansion.
	Summing $(i)$ for $m \geq 0$ gives an overall sum over all $\gamma = (\gamma_\beta)_{\beta \in \cX_d}$, that is over all $\cX_{d+1}$. Moreover, for $\gamma = (\gamma_\beta)_{\beta \in \cX_d} \in \cX_{d+1}$, one has $$|\gamma| = 1+ \sum_{\beta \in \cX_d}\gamma_{\beta}|\beta| \,. $$ 
	Hence, equation \eqref{eq:recursive_eigen_step1} becomes
	\begin{flalign*}
		\hatdPls_{d+1}(k,k') & = \sum_{\substack{\gamma \in \cX_{d+1} \\ \gamma = (\gamma_\beta)_{\beta \in \cX_d}}} s^{|\gamma|-1} \prod_{\beta \in \cX_d} \frac{1}{\gamma_{\beta}!} \left( \lambda \left[ \sum_{t  \in \cX_d}\gl_{d,\beta}(t)(e^{i k_t} -1) \right] \left[ \sum_{t  \in \cX_d}\gl_{d,\beta}(t) (e^{i k'_{t}} -1)\right]\right)^{\gamma_\beta} \nonumber \\
		& \quad \quad \quad \times  e^{\lambda \sum_{t \in \cX_d} \GWl_{d}(t)(e^{i k_t} -1) + \lambda \sum_{t \in \cX_d} \GWl_{d}(t)(e^{i k'_t} -1)} \nonumber \\
		& = \sum_{\substack{\gamma \in \cX_{d+1} \\ \gamma = (\gamma_\beta)_{\beta \in \cX_d}}} s^{|\gamma|-1} \hatgl_{d+1,\gamma}(k) \hatgl_{d+1,\gamma}(k')\,, 
	\end{flalign*} with 
	\begin{flalign*}
		\hatgl_{d+1,\gamma}(k) & := e^{\lambda \sum_{t \in \cX_d} \GWl_{d}(t)(e^{i k_t} -1)}  \prod_{\beta \in \cX_d}  \frac{1}{\sqrt{\gamma_{\beta}!}}  \left[ \sqrt{\lambda} \sum_{t  \in \cX_d}\gl_{d,\beta}(t)(e^{i k_t} -1)\right]^{\gamma_\beta} \nonumber \\
		& = e^{- \lambda} \sqrt{\prod_\beta \gamma_\beta!} \, [x^\gamma] e^{ \lambda \sum_{t \in \cX_d} \GWl_{d}(t) e^{i k_t} + \sum_{\beta \in \cX_d} x_{\beta} \sqrt{\lambda} \sum_{t  \in \cX_d}\gl_{d,\beta}(t)(e^{i k_t} -1)} \nonumber \\
		& = e^{- \lambda} \sqrt{\prod_\beta \gamma_\beta!} \, [x^\gamma] e^{ -  \sqrt{\lambda} \sum_{\beta,t \in \cX_d} x_{\beta} \gl_{d,\beta}(t) + \sum_{t} e^{i k_t} \left[\lambda \GWl_{d}(t) + \sum_{\beta} x_\beta \sqrt{\lambda} \gl_{d,\beta}(t) \right]} \, ,
	\end{flalign*} where $x = \set{x_\beta}_{\beta \in \cX_d}$ is a family of formal variables and $x^{\gamma}$ denotes $\prod_\beta x_{\beta}^{\gamma_\beta}$ when $\gamma = (\gamma_\beta)_{\beta \in \cX_d}$.  Recall that since the trees are finite, only a finite number of coordinates $\gamma_\beta$ are non-zero, which makes the infinite product problem disappear. The same arguments of normal convergence as in the case $d=1$ apply to justify the integral/sum permutations.
	
	As done in Step 1, we can invert the Fourier transform by integrating over every $k_t$, which gives  
	
	\begin{flalign*}
		\gl_{d+1,\gamma}(N) = e^{- \lambda} \sqrt{\prod_\beta \gamma_\beta!} \, [x^\gamma] e^{ -  \sqrt{\lambda} \sum_{\beta,t \in \cX_d} x_{\beta} \gl_{d,\beta}(t)} \prod_{t \in \cX_d} \frac{\left[\lambda \GWl_{d}(t) + \sum_{\beta} x_\beta \sqrt{\lambda} \gl_{d,\beta}(t) \right]^{N_t}}{N_t!} \, .
	\end{flalign*} 
	Dividing this expression by $\GWl_{d+1}(N)$ establishes that $L_{d+1}(N,N')$ satisfies the decomposition \eqref{eq:theorem:eigendecomposition} with $\fl_{d+1, \gamma}$ given by the following recursion
	\begin{equation}\label{eq:recursive_expression_f}
		\fl_{d+1,\gamma}(N) := \sqrt{\prod_{\beta \in \cX_d} \gamma_\beta!} \, [x^\gamma] e^{ -  \sqrt{\lambda} \sum_{\beta,t \in \cX_d} x_{\beta} \gl_{d,\beta}(t)} \prod_{t \in \cX_d} \left( 1 + \sum_{\beta \in \cX_d} \frac{x_\beta}{\sqrt{\lambda}}  \fl_{d,\beta}(t) \right)^{N_t} \, ,   
	\end{equation}
	which is independent of $s$.
	Taking $\gamma = \bigcdot$ in \eqref{eq:recursive_expression_f}, that is $\gamma_\beta = 0$ for all $\beta$, gives $\fl_{d+1,\bigcdot} = 1$ and proves condition \eqref{eq:theorem:eigendecomposition_trivial_tree} at depth $d+1$. \\
	
	\underline{\emph{Step 2.1: recursion for \eqref{eq:theorem:eigendecomposition_orthogonality} and \eqref{eq:theorem:eigendecomposition_mixed_products} at $d+1$}}
	Let us now prove the properties \eqref{eq:theorem:eigendecomposition_orthogonality} and \eqref{eq:theorem:eigendecomposition_mixed_products} at depth $d+1$.
	For any $\gamma^{(1)} = \{\gamma^{(1)}_\beta\}_{\beta \in \cX_{d}}$, $\ldots, \gamma^{(n)} = \{\gamma^{(n)}_\beta\}_{\beta \in \cX_{d}} \in \cX_{d+1}$, the recursion \eqref{eq:recursive_expression_f} gives
	
	\begin{multline*}
		\sum_{N \in \cX_{d+1}} \GWl_{d+1}(N) \fl_{d+1, \gamma^{(1)} }(N) \cdots \fl_{d+1, \gamma^{(n)} }(N) =  \sqrt{\prod_{i=1}^n \prod_{\beta \in \cX_d} \gamma^{(i)}_\beta!} \, \left[ \prod_{i=1}^{n} (x^{(i)})^{\gamma^{(i)}} \right] \\ \times \exp \left[ - \lambda - \sqrt{\lambda} \sum_{\beta,t  \in \cX_d} \sum_{i=1}^{n} x_{\beta}^{(i)} \gl_{d,\beta}(t) + \lambda \sum_{t \in \cX_d} \GWl_d(t) \prod_{i=1}^{n} \left(1 + \sum_{\beta \in \cX_d} \frac{x_{\beta}^{(i)}}{\sqrt{\lambda}} \fl_{d,\beta}(t) \right) \right] \, .
	\end{multline*}As in Step 1, when expanding the product in the exponential, the zero and first order terms simplify, which yields
	\begin{multline}\label{eq:recursive_mult_step1}
		\sum_{N \in \cX_{d+1}} \GWl_{d+1}(N) \fl_{d+1, \gamma^{(1)} }(N) \cdots \fl_{d+1, \gamma^{(n)} }(N) =  \sqrt{\prod_{i=1}^n \prod_{\beta \in \cX_d} \gamma^{(i)}_\beta!}  \\  \times \left[ \prod_{i=1}^{n} (x^{(i)})^{\gamma^{(i)}} \right] 
		\exp \left[\sum_{1 \leq i < j \leq n} \sum_{\beta,\beta' \in \cX_d} x_{\beta}^{(i)} x_{\beta'}^{(j)} \sum_{t \in \cX_d} \GWl_d(t) \fl_{d,\beta}(t) \fl_{d,\beta'}(t) + \eps_{\lambda}(x^{(1)}, \ldots, x^{(n)}) \right] \, ,
	\end{multline} with 
	$$
	\eps_{\lambda}(x^{(1)}, \ldots, x^{(n)}) := \sum_{p=3}^{n} \lambda^{1-p/2} \sum_{1 \leq i_1 < \ldots < i_p \leq n} \sum_{\beta_1, \ldots, \beta_p \in \cX_d} x^{(i_1)}_{\beta_1} \cdots x^{(i_p)}_{\beta_p} \sum_{t \in \cX_d} \GWl_d(t) \fl_{d,\beta_1}(t) \cdots \fl_{d,\beta_p}(t) \, . 
	$$
	Using the orthogonality property \eqref{eq:theorem:eigendecomposition_orthogonality} at step $d$, \eqref{eq:recursive_mult_step1} reads
	\begin{multline}\label{eq:recursive_mult_step2}
		\sum_{N \in \cX_{d+1}} \GWl_{d+1}(N) \fl_{d+1, \gamma^{(1)} }(N) \cdots \fl_{d+1, \gamma^{(n)} }(N) =  \sqrt{\prod_{i=1}^n \prod_{\beta \in \cX_d} \gamma^{(i)}_\beta!}   \\ \times \left[ \prod_{i=1}^{n} (x^{(i)})^{\gamma^{(i)}} \right] 
		e^{\sum_{1 \leq i < j \leq n} \sum_{\beta \in \cX_d} x_{\beta}^{(i)} x_{\beta}^{(j)}}
		\exp\left[\eps_{\lambda}(x^{(1)}, \ldots, x^{(n)}) \right] \, .
	\end{multline} 
	For $n=2$ the term $\eps_\lambda$ vanishes, hence
	\begin{align*}
		\sum_{N \in \cX_{d+1}} \GWl_{d+1}(N) \fl_{d+1, \gamma }(N) \fl_{d+1, \gamma' }(N) & = \prod_{\beta \in \cX_d} \left[ \sqrt{\gamma_\beta ! \gamma'_\beta! } [x^{\gamma_\beta} (x')^{\gamma'_\beta}] e^{x x'} \right] \\ & = \prod_{\beta \in \cX_d} \one_{\gamma_\beta = \gamma'_\beta} = \one_{\gamma = \gamma'} \, ,
	\end{align*} 
	establishing the orthogonality property \eqref{eq:theorem:eigendecomposition_orthogonality} for the rank $d+1$.
	
	Moreover, using the property \eqref{eq:theorem:eigendecomposition_mixed_products} at step $d$, $\sum_{t \in \cX_d} \GWl_d(t) \fl_{d,\beta_1}(t) \cdots \fl_{d,\beta_p}(t)$ 
	has a finite limit when $\lambda \to \infty$. Hence, as in Step $1$, the terms corresponding to $\left[ \prod_{i=1}^{n} (x^{(i)})^{\gamma^{(i)}} \right]$ in \eqref{eq:recursive_mult_step2} 
	to which $\eps_{\lambda}(x^{(1)}, \ldots, x^{(n)})$ contributes are in finite number (independent of $\lambda$) and are all of order $O(\lambda^{-1/2})$. Taking $\lambda \to \infty$ thus establishes property \eqref{eq:theorem:eigendecomposition_mixed_products} for the rank $d+1$.  Here again, previous computations are made rigorous since the trees are finite, by noticing that the series in \eqref{eq:recursive_expression_f} has infinite radius of convergence, and appealing to Fubini's theorem. We use the same arguments to make computations rigorous in the rest of the proof.\\
	
	\underline{\emph{Step 2.2: recursion for \eqref{eq:theorem:eigendecomposition_orthogonality_2} at $d+1$}}
	We will now prove the orthogonality relation \eqref{eq:theorem:eigendecomposition_orthogonality_2} for the rank $d+1$. To do so we introduce another family of formal variables $y=\{y_t\}_{t\in\cX_d}$ and rewrite \eqref{eq:recursive_expression_f} as
	\begin{align*}
		\fl_{d+1,\gamma}(N) & = \sqrt{\prod_{\beta \in \cX_d} \gamma_\beta!} \prod_{t \in \cX_d} N_t!  \, [x^\gamma y^N] e^{ -  \sqrt{\lambda} \sum_{\beta,t \in \cX_d} x_{\beta} \gl_{d,\beta}(t) + \sum_{t \in \cX_d} y_t + \sum_{\beta,t \in \cX_d}\frac{x_\beta y_t}{\sqrt{\lambda}}  \fl_{d,\beta}(t) } \\
		& = \frac{\prod_{t } N_t! }{\sqrt{\prod_\beta \gamma_\beta!}} \, [y^N] e^{\sum_{t \in \cX_d} y_t} \prod_{\beta \in \cX_d}\left( \sum_{t\in \cX_d} \fl_{d,\beta}(t) \left(\frac{y_t}{\sqrt{\lambda}} - \sqrt{\lambda} \, \GWl_d(t) \right)\right)^{\gamma_\beta}.
	\end{align*}
	The above expression gives that for all $N,N' \in \cX_{d+1}$,
	\begin{flalign*}
		& \sum_{\gamma \in \cX_{d+1}} \fl_{d+1, \gamma }(N)\fl_{d+1, \gamma }(N') = \prod_{t} N_t! N'_t! \, [x^N y^{N'}] e^{\sum_t (x_t + y_t)}\\
		& \quad \quad  \quad \quad \quad \quad \times e^{\sum_{\alpha,t,t'} \fl_{d,\alpha}(t) \fl_{d,\alpha}(t') \left(\frac{x_t}{\sqrt{\lambda}} - \sqrt{\lambda} \GWl_d(t) \right) \left(\frac{y_{t'}}{\sqrt{\lambda}} - \sqrt{\lambda} \GWl_d(t') \right)} \\
		& \quad \quad = \prod_{t} N_t! N'_t! \, [x^N y^{N'}] e^{\sum_t (x_t + y_t) + \sum_{t} \frac{1}{\GWl_d(t)}  \left(\frac{x_t}{\sqrt{\lambda}} - \sqrt{\lambda} \GWl_d(t) \right)\left(\frac{y_t}{\sqrt{\lambda}} - \sqrt{\lambda} \GWl_d(t) \right)}  \, ,
	\end{flalign*} where we used \eqref{eq:theorem:eigendecomposition_orthogonality_2} at step $d$ in the last step. This simplifies to 
	\begin{flalign*}
		\sum_{\gamma \in \cX_{d+1}} \fl_{d+1, \gamma}(N)\fl_{d+1, \gamma }(N') & = \prod_{t} N_t! N'_t! \, [x^N y^{N'}] \, e^{\lambda \GWl_{d}(t) + \sum_t \frac{x_t y_t}{\lambda \GWl_d(t)}} \\
		& = \prod_t \one_{N_t=N'_t} e^{\lambda \GWl_{d}(t)} (\lambda \GWl_{d}(t))^{-N_t} N_t! = \frac{\one_{N=N'}}{\GWl_{d+1}(N)} \, ,
	\end{flalign*} which proves \eqref{eq:theorem:eigendecomposition_orthogonality_2} at step $d+1$ and completes the proof of Theorem \ref{theorem:eigendecomposition}.

\end{proof}

\subsection{Computation of cyclic moments, proof of Theorem \ref{thm:negative_result}}
\label{section:proof_negative_result}

Theorem \ref{theorem:eigendecomposition} hereabove has a very natural Corollary that enables to compute the cyclic moments of the likelihood ratio.

\begin{corollary}[Cyclic moments]\label{corr:cyclic_moments}
	The \emph{$m-$th cyclic moment} of $L_d$ is defined as follows
	\begin{equation*}\label{eq:def:cyclic_moments}
		\Cls_{d,m} := \dEl_d\left[L_d(T_1,T_2)\cdots L_d(T_{m-1},T_m) L_d(T_m,T_1)\right],
	\end{equation*} where $T_1, \ldots, T_m$ are i.i.d. $\GWl_d$ in the above expectation. One has
	\begin{equation}\label{eq:corr:cyclic_moments}
		\Cls_{d,m} = \sum_{\beta \in \cX_d} (s^m)^{\card{\beta}-1} = \sum_{n \geq 1} A_{d,n}(s^m)^{n-1} = \Phi_d(s^m) ,
	\end{equation} where $A_{d,n}$, as defined in \eqref{eq:def:Adn}, denotes the number of unlabeled trees with $n$ vertices of depth at most $d$, and $\Phi_d$ is their generating function defined in Proposition \ref{prop:A_dn}. Note that in particular, the $\Cls_{d,m}$ do not depend on $\lambda$ (!) and by Proposition \ref{prop:A_dn} they are upper bounded for each $d$ and $s \in [0,1)$ by some constant $A=A(d,s)$. We thus denote $\Cs_{d,m} := \Cls_{d,m}$ in the sequel.
\end{corollary}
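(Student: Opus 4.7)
The plan is to substitute the eigendecomposition \eqref{eq:theorem:eigendecomposition} of $L_d$ into the definition of $C_{d,m}^{(\lambda,s)}$ and collapse the resulting multi-sum using the $L^2$-orthogonality \eqref{eq:theorem:eigendecomposition_orthogonality}. Expanding each of the $m$ factors gives
\begin{equation*}
\Cls_{d,m} = \sum_{T_1,\ldots,T_m \in \cX_d} \prod_{i=1}^{m} \GWl_d(T_i) \prod_{i=1}^{m} \left( \sum_{\beta_i \in \cX_d} s^{|\beta_i|-1} \fl_{d,\beta_i}(T_i)\fl_{d,\beta_i}(T_{i+1}) \right),
\end{equation*}
with the cyclic convention $T_{m+1}=T_1$. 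Interchanging the $\beta$-sums with the $T$-expectations and grouping the $T_i$-factors, each $T_i$ carries exactly the two eigenvectors $\fl_{d,\beta_{i-1}}$ and $\fl_{d,\beta_i}$, so
\begin{equation*}
\Cls_{d,m} = \sum_{\beta_1,\ldots,\beta_m \in \cX_d} \prod_{i=1}^{m} s^{|\beta_i|-1} \prod_{i=1}^{m} \left( \sum_{T_i \in \cX_d} \GWl_d(T_i) \fl_{d,\beta_{i-1}}(T_i)\fl_{d,\beta_i}(T_i)\right).
\end{equation*}
By \eqref{eq:theorem:eigendecomposition_orthogonality} the inner bracket equals $\one_{\beta_{i-1}=\beta_i}$, which forces $\beta_1=\beta_2=\cdots=\beta_m =: \beta$, leaving
\begin{equation*}
\Cls_{d,m} = \sum_{\beta \in \cX_d} s^{m(|\beta|-1)} = \sum_{\beta \in \cX_d} (s^m)^{|\beta|-1}.
\end{equation*}
Grouping trees $\beta\in\cX_d$ by their size $n=|\beta|$ and invoking Definition \ref{def:An_Adn} and Proposition \ref{prop:A_dn}, this rewrites as $\sum_{n\geq 1}A_{d,n}(s^m)^{n-1}=\Phi_d(s^m)$, which is the announced formula.

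The only point that requires justification is the interchange of summation orders above. For $s\in[0,1)$, Proposition \ref{prop:A_dn} ensures that $\Phi_d(s^m)<\infty$, and in fact the eigendecomposition \eqref{eq:theorem:eigendecomposition} converges in $L^2(\GWl_d\otimes \GWl_d)$ since $\{\fl_{d,\beta}\}_{\beta\in\cX_d}$ is an orthonormal family with respect to $\GWl_d$ by \eqref{eq:theorem:eigendecomposition_orthogonality}. The cleanest way to justify the exchange is to truncate: define $L_d^{(K)}(t,t'):=\sum_{\beta\in\cX_d,\,|\beta|\leq K}s^{|\beta|-1}\fl_{d,\beta}(t)\fl_{d,\beta}(t')$, which is a finite sum so all manipulations are legitimate and yield $\sum_{\beta\in\cX_d,\,|\beta|\leq K}(s^m)^{|\beta|-1}$ for the associated truncated cyclic moment. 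Since $L_d^{(K)}\to L_d$ in $L^2(\GWl_d\otimes\GWl_d)$ as $K\to\infty$, the $m$-fold cyclic product converges in $L^1$ (by Cauchy–Schwarz combined with uniform $L^{2m}$-bounds of the partial sums, which in turn follow from the higher order moment control \eqref{eq:theorem:eigendecomposition_mixed_products} or more simply from a direct monotone-convergence argument on non-negative $L_d$), and the right-hand side converges to $\Phi_d(s^m)$. The case $s=1$ is trivial: then $L_d\equiv 1$ so $\Cls_{d,m}=1=\Phi_d(1^m)$ understood in $[0,\infty]$, consistent with the stated formula.

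The $\lambda$-independence of $\Cls_{d,m}$ is then a direct consequence of the final formula, so the notation $\Cs_{d,m}$ is justified. No further obstacle arises; the heart of the proof is really just orthogonality plus the counting identification $\sum_{\beta\in\cX_d}x^{|\beta|-1}=\Phi_d(x)$.
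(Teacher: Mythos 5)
Your computation is exactly the paper's: substitute the eigendecomposition \eqref{eq:theorem:eigendecomposition} into the cyclic product, use independence of the $T_i$'s and the orthogonality relation \eqref{eq:theorem:eigendecomposition_orthogonality} to force $\beta_1=\cdots=\beta_m$, then recognize $\sum_{\beta}(s^m)^{|\beta|-1}=\Phi_d(s^m)$. So the heart of the argument is identical.

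Where you diverge is in the justification of the interchange, and there your route is both less direct and not watertight. For $m\ge 3$, $L^2(\GWl_d\otimes\GWl_d)$-convergence of the truncations $L_d^{(K)}\to L_d$ does not by itself imply $L^1$-convergence of the $m$-fold cyclic product; you would need something like $L^m$-convergence or uniform $L^{2m}$-bounds. The bound you invoke from \eqref{eq:theorem:eigendecomposition_mixed_products} is a $\lambda\to\infty$ \emph{limit} statement about mixed moments (which are signed), not a uniform $L^{2m}$-norm bound, so it does not supply this. And a "monotone-convergence argument on non-negative $L_d$" is not available here, because the signed partial sums $L_d^{(K)}$ are not monotone in $K$ (the $\fl_{d,\beta}$ take both signs). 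The paper sidesteps all of this with a single Fubini step: Cauchy--Schwarz gives $\dEl_d\bigl[\,|\fl_{d,\alpha}(T)\,\fl_{d,\alpha'}(T)|\,\bigr]\le 1$ for every $\alpha,\alpha'$, so the absolute value of the multi-sum over $(\beta_1,\ldots,\beta_m)$ is dominated by $\sum_{\beta_1,\ldots,\beta_m}s^{\sum_i(|\beta_i|-1)}=\Phi_d(s)^m$, which is finite for $s<1$ by Proposition \ref{prop:A_dn}. This absolute-summability argument is exactly what you want here; you should replace your truncation/$L^p$ discussion by it (or simply note that absolute convergence of the multi-sum is what makes the truncation limit work). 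With that correction, your proof coincides with the paper's.
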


\begin{proof}[Proof of Corollary \ref{corr:cyclic_moments}]
	By Theorem \ref{theorem:eigendecomposition} we have 
	$$ L_d(t,t') = \sum_{\beta \in \cX_d} s^{\card{\beta} -1} \fl_{d,\beta}(t) \fl_{d,\beta}(t'),  $$
	hence, setting $\beta_{m+1} = \beta_1$,
	\begin{flalign*}
		\Cls_{d,m} & = \dEl_d\left[L_d(T_1,T_2)\cdots L_d(T_{m-1},T_m) L_d(T_m,T_1)\right] \\
		& = \sum_{\beta_1, \ldots, \beta_m \in \cX_d} s^{\sum_{i=1}^m (|\beta_i| -1)} \dEl_d \left[\prod_{i=1}^{m} \fl_{d,\beta_i}(T_i) \fl_{d,\beta_{i+1}}(T_i)\right] \\
		& = \sum_{\beta_1, \ldots, \beta_m \in \cX_d} s^{\sum_{i=1}^m (|\beta_i| -1)} \prod_{i=1}^{m} \dEl_d \left[\fl_{d,\beta_i}(T) \fl_{d,\beta_{i+1}}(T)\right] \\
		& = \sum_{\beta_1, \ldots, \beta_m \in \cX_d} s^{\sum_{i=1}^m (|\beta_i| -1)} \one_{\beta_1 = \, \ldots \, = \beta_m}\\
		& = \sum_{\beta \in \cX_d} (s^m)^{|\beta|-1} \ ,
	\end{flalign*}
	where we used the orthogonality property \eqref{eq:theorem:eigendecomposition_orthogonality} between the third and fourth line. All steps in the above computations are legitimate by Fubini's theorem, the integrability following from the Cauchy-Schwartz inequality since $$\dEl_d \left[\left|\fl_{d,\alpha}(T) \fl_{d,\alpha'}(T)\right|\right] \leq \dEl_d \left[(\fl_{d,\alpha}(T))^2\right]^{1/2} \dEl_d \left[ (\fl_{d,\alpha'}(T))^2\right]^{1/2} = 1 \, ,$$ by property \eqref{eq:theorem:eigendecomposition_orthogonality} of Theorem \ref{theorem:eigendecomposition}.
\end{proof}

We are now ready to give a proof of Theorem \ref{thm:negative_result}.

\begin{proof}[Proof of Theorem \ref{thm:negative_result}]
	According to Corollary \ref{corr:cyclic_moments}, one has 
	\begin{equation}\label{eq:proof:thm:negative_result}
		\dEl_d\left[ L_d (T,T')^2\right] = \Cs_{d,2} = \sum_{n \geq 1} A_{d,n} s^{2(n-1)}.
	\end{equation}
	Moreover, since $A_{d,n} \leq A_n$ (by Definition \ref{def:An_Adn}) and $A_n \underset{n \to \infty}{\sim} \frac{C}{n^{3/2}} \left( \frac{1}{\alpha} \right)^n$ by Proposition \ref{prop:otter}, the assumption $s \leq \sqrt{\alpha}$ ensures that $\dEl_d\left[ L_d (T,T')^2\right] = \sum_{n \geq 1} A_{d,n} s^{2(n-1)} \leq \sum_{n \geq 1} A_{n} s^{2(n-1)} < \infty$, uniformly in $d$.
	
	Then, applying Jensen's inequality yields
	\begin{equation*}
		\KL(\dPls_d \, || \, \dPl_d) = \dEls_d\left[ \log L_d (T,T')\right] \leq \log \dEls_d\left[ L_d (T,T')\right] \ .
	\end{equation*}
	By definition of the likelihood ratio, $\dEls_d\left[ f(T,T')\right] = \dEl_d\left[ L_d (T,T') f(T,T')\right]$ for every measurable function $f$, we can thus continue the above inequality as
	\begin{equation*}
		\KL(\dPls_d \, || \, \dPl_d) \leq \log \dEl_d\left[ L_d (T,T')^2\right] < \infty \ ,
	\end{equation*} uniformly in $d$, and conclude the proof.

\end{proof}



{As seen in this section, the boundedness (as $d \to \infty$) of $\dEl_d\left[ L_d (T,T')^2\right]-1$, the $\chi^2$ distance between $\dPls_d$ and $\dPl_d$, is enough to establish the impossibility result of Theorem \ref{thm:negative_result}. One could ask the reverse question: does the divergence of the $\chi^2$ distance imply the feasibility of one-sided detection? The answer to this question is negative; the divergence of the $\chi^2$ may occur even when $\KL_d$ does not diverge. For example, the $\chi^2$ diverges for $s > \sqrt{\alpha}$ and $\lambda>0$ arbitrarily small, including cases where $\lambda s<1$. We know that in this regime partial graph alignment is unfeasible, hence $\KL_d$ does not diverge. This is due to the fact that the mean value in the $\chi^2$ is dominated in this case by very rare events where the likelihood ratio $L_d$ is very large, which make it diverges while $\KL_d$, which is less sensitive to these atypical values because of its logarithmic dependency on $L_d$, remains bounded. In order to establish positive results through the divergence of (a lower bound on) $\KL_d$ we thus need a finer description of the random variable $L_d$ than the one provided by the $\chi^2$ distance, a goal that we will achieve in the high degree regime where some simplifications take place.}

\section{The high-degree regime: positive result when $s > \sqrt{\alpha}$ in the gaussian approximation}

In view of Definition \ref{def:correlated_model}, we recall that a pair of correlated trees $(t,t')$ of depth at most $d+1$ sampled from $\dPls_{d+1}$ are of the form $t = \set{N_\tau}_{\tau \in \cX_{d}}$ and $t' = \set{N'_\tau}_{\tau \in \cX_{d}}$ with 
\begin{equation}\label{eq:NN'M}
N_{\tau} := \Delta_\tau + \sum_{\tau' \in \cX_{d}} M_{\tau, \tau'} \quad \mbox{and} \quad
N'_{\tau'} := \Delta'_{\tau'} + \sum_{\tau \in \cX_{d}} M_{\tau, \tau'} \,.
\end{equation} with
\begin{equation*}
\Delta_{\tau}, \Delta'_{\tau} \sim\Poi(\lambda (1-s) \GWl_{d}(\tau)) \quad \mbox{and} \quad M_{\tau, \tau'} \sim \Poi(\lambda s \dPls_{d}(\tau,\tau')) \, ,
\end{equation*}
all these random variables being independent.

\subsection{Gaussian approximation in the high-degree regime}\label{subsection:gaussian_approx}

Let us define $y = (y_\beta)_{\beta \in \cX_d}$  and $y' = (y'_{\beta})_{\beta \in \cX_d}$ as follows:
\begin{flalign}
y_\beta := \frac{1}{\sqrt{\lambda}} \sum_{\tau \in \cX_{d}}\fl_{d,\beta}(\tau) (N_\tau - \lambda \GWl_{d}(\tau)) \label{eq:y_alpha} \\
y'_\beta := \frac{1}{\sqrt{\lambda}} \sum_{\tau \in \cX_{d}}\fl_{d,\beta}(\tau) (N'_\tau - \lambda \GWl_{d}(\tau)) \label{eq:y'_alpha} 
\end{flalign}
where the $\fl_{d,\beta}$ are defined in Theorem \ref{theorem:eigendecomposition}.
In other words, $y$ (resp. $y'$) is a centered version of $N$ (resp. $N'$), projected onto the basis of eigenvectors.

Let $(z,z') = ((z_\beta)_{\beta \in \cX_d},(z'_{\beta'})_{\beta' \in \cX_d}))$ be an (infinite-dimensional) centered Gaussian vector defined by its covariance matrix: 
\begin{equation}\label{eq:covariance_z}
\forall \beta, \beta' \in \cX_d, \quad \dE[z_{\beta}z_{\beta'}] = \dE[z'_{\beta}z'_{\beta'}] = \one_{\beta = \beta'}, \quad \dE[z_{\beta}z'_{\beta'}] = s^{|\beta|}\one_{\beta = \beta'}.
\end{equation}

Let us denote by $\dpls_{d+1}$ the joint distribution of $(y,y')$ when $(t,t')$ is drawn from $\dPls_{d+1}$, and $\gwl_{d+1}$ the marginal distribution of $y$ (or $y'$). In view of the orthogonality property \eqref{eq:theorem:eigendecomposition_orthogonality_2} in Theorem \ref{theorem:eigendecomposition}, the transformations $N \to y$ in \eqref{eq:y_alpha} and $N' \to y'$ in \eqref{eq:y'_alpha} are affine and bijective, and can be inverted as follows: 
\begin{equation*}
N_\tau = \lambda \, \GWl_{d}(\tau) + \sqrt{\lambda} \, \GWl_{d}(\tau) \sum_{\beta \in \cX_d} y_\beta \, \fl_{d,\beta}(\tau) \, .
\end{equation*} Hence,
\begin{equation}\label{eq:equality_of_KL_centered}
\KL(\dPls_{d+1} \| \GWl_{d+1} \otimes \GWl_{d+1}) = \KL(\dpls_{d+1} \| \gwl_{d+1} \otimes \gwl_{d+1}) \, .
\end{equation}


\begin{lemma}\label{lemma:cv_weak_dual}
When $\lambda \to \infty$, we have the following convergence in distribution:
\begin{equation}\label{eq:lemma:cv_weak_dual}
	(y,y') \overset{\mathrm{(d)}}{\longrightarrow} (z,z').
\end{equation}
\end{lemma}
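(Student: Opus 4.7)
The plan is to prove convergence of all finite-dimensional marginals of $(y,y')$ by establishing pointwise convergence of characteristic functions. Since $\cX_d$ is countable, this yields convergence in distribution on the Polish product space $\dR^{\cX_d}\times\dR^{\cX_d}$ equipped with the product topology. I will fix finite families of coefficients $\theta=\{\theta_\beta\}$, $\theta'=\{\theta'_\beta\}$ and set $A_\tau:=\sum_\beta \theta_\beta \fl_{d,\beta}(\tau)$, $A'_\tau:=\sum_\beta \theta'_\beta \fl_{d,\beta}(\tau)$. Substituting the Poisson representation \eqref{eq:NN'M} one can rewrite
\begin{equation*}
\sum_\beta \theta_\beta y_\beta + \sum_\beta \theta'_\beta y'_\beta \;=\; \tfrac{1}{\sqrt{\lambda}}\!\left(\sum_\tau A_\tau \widetilde{\Delta}_\tau + \sum_\tau A'_\tau \widetilde{\Delta}'_\tau + \sum_{\tau,\tau'}(A_\tau+A'_{\tau'})\widetilde{M}_{\tau,\tau'}\right),
\end{equation*}
with $\widetilde{X}:=X-\dE X$. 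By independence of all the Poissons involved, the log-characteristic function factorizes as a sum $\sum_\alpha \mu_\alpha(e^{ic_\alpha/\sqrt{\lambda}}-1-ic_\alpha/\sqrt{\lambda})$ over the underlying Poisson components $\alpha$; absolute convergence of this series follows from $|e^{iu}-1-iu|\le u^2/2$ together with the bound $\sum_\tau \GWl_d(\tau)A_\tau^2=\sum_\beta \theta_\beta^2$, a direct consequence of the orthogonality \eqref{eq:theorem:eigendecomposition_orthogonality}.

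Expanding $e^{iu}-1-iu=-u^2/2+O(|u|^3)$ and summing, the leading quadratic piece equals
\begin{equation*}
-\tfrac{1-s}{2}\sum_\tau \GWl_d(\tau)\bigl(A_\tau^2+(A'_\tau)^2\bigr) - \tfrac{s}{2}\sum_{\tau,\tau'}\dPls_d(\tau,\tau')(A_\tau+A'_{\tau'})^2.
\end{equation*}
Two computations will simplify this. The orthogonality \eqref{eq:theorem:eigendecomposition_orthogonality} immediately gives $\sum_\tau \GWl_d(\tau)A_\tau^2=\sum_\beta\theta_\beta^2$ (and likewise for $A'$). Substituting the eigendecomposition \eqref{eq:theorem:eigendecomposition} of $L_d=\dPls_d/(\GWl_d\otimes\GWl_d)$ and applying \eqref{eq:theorem:eigendecomposition_orthogonality} twice more will yield the crucial identity
\begin{equation*}
\sum_{\tau,\tau'}\dPls_d(\tau,\tau')\fl_{d,\beta}(\tau)\fl_{d,\beta'}(\tau') = s^{|\beta|-1}\one_{\beta=\beta'}.
\end{equation*}
Expanding $(A_\tau+A'_{\tau'})^2$ and invoking the marginal identity $\sum_{\tau'}\dPls_d(\tau,\tau')=\GWl_d(\tau)$ for the pure $A_\tau^2$ and $(A'_{\tau'})^2$ pieces, the leading quadratic form will collapse exactly to $-\tfrac{1}{2}\sum_\beta\bigl(\theta_\beta^2+(\theta'_\beta)^2\bigr)-\sum_\beta s^{|\beta|}\theta_\beta\theta'_\beta$, which is precisely the log-characteristic function of $(z,z')$ under the covariance \eqref{eq:covariance_z}. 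Notably this computation will be exact for every $\lambda>0$, not only asymptotically.

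The main obstacle will be controlling the cubic remainder of size $\lambda^{-1/2}\sum_\alpha \mu_\alpha|c_\alpha|^3/\lambda$. The $\Delta, \Delta'$ contributions give $O(\lambda^{-1/2})\sum_\tau \GWl_d(\tau)|A_\tau|^3$ (and the same with $A'$), while the $M$-term reduces to these via $|A_\tau+A'_{\tau'}|^3\le 4\bigl(|A_\tau|^3+|A'_{\tau'}|^3\bigr)$ together with the marginalization $\sum_{\tau'}\dPls_d(\tau,\tau')=\GWl_d(\tau)$. Expanding the finite sum defining $A_\tau$ and using the power-mean inequality further reduces the task to bounding $\sum_\tau \GWl_d(\tau)|\fl_{d,\beta}(\tau)|^3$ for each $\beta$; by Cauchy--Schwarz combined with \eqref{eq:theorem:eigendecomposition_orthogonality}, this is at most $\bigl(\sum_\tau \GWl_d(\tau)\fl_{d,\beta}(\tau)^4\bigr)^{1/2}$, and the fourth moment has a finite limit by \eqref{eq:theorem:eigendecomposition_mixed_products} with $n=4$ and $\beta^{(1)}=\cdots=\beta^{(4)}=\beta$, hence stays bounded for large $\lambda$. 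The cubic remainder is therefore $O(\lambda^{-1/2})$, the characteristic function of the chosen marginal converges pointwise to that of $(z,z')$, and the lemma follows.
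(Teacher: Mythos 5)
Your proof is correct and follows essentially the same strategy as the paper's: reduce to finite-dimensional marginals, compute the characteristic function from the Poisson representation \eqref{eq:NN'M}, and exploit the three structural properties \eqref{eq:theorem:eigendecomposition}, \eqref{eq:theorem:eigendecomposition_orthogonality}, and \eqref{eq:theorem:eigendecomposition_mixed_products} from Theorem~\ref{theorem:eigendecomposition}. The one genuine difference is in how the error beyond the quadratic term is controlled. The paper invokes a cumulant-convergence theorem (Chung, Thm.~4.5.5) and observes that the cumulant of order $n\ge 3$ carries an explicit prefactor $\lambda^{1-n/2}$ times a sum whose $\lambda\to\infty$ limit is finite by \eqref{eq:theorem:eigendecomposition_mixed_products}. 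You instead truncate the log-characteristic function after the quadratic term using $|e^{iu}-1-iu+u^2/2|\le |u|^3/6$, reduce the resulting error to bounding $\sum_\tau\GWl_d(\tau)|\fl_{d,\beta}(\tau)|^3$, and handle this via Cauchy--Schwarz and the $n=4$ case of \eqref{eq:theorem:eigendecomposition_mixed_products}. Your route is a bit more self-contained (no need for the separate convergence-of-cumulants theorem, nor for the uniqueness-by-moments justification it presupposes), at the modest cost of the explicit third-moment bookkeeping; the paper's is slightly more systematic in that it addresses all higher orders at once. Both correctly note that the quadratic piece is exact in $\lambda$, which is the structural heart of the lemma.
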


\begin{proof}
Let us first precise the space in which $(y,y')$ and $(z,z')$ lie. This space is $\dR^{\cX_d} \times \dR^{\cX_d} $, which we endow with the following distance:
\begin{multline*}
	\rho_d((y^{(1)},y'^{(1)}),(y^{(2)},y'^{(2)})) := \\ \frac{1}{2}\sum_{\beta \in \cX_d} 2^{-\card{\beta}}\min(1,\left| y^{(1)}_\beta - y^{(2)}_\beta\right|) + \frac{1}{2}\sum_{\beta \in \cX_d} 2^{-\card{\beta}}\min(1,\left| y'^{(1)}_\beta - y'^{(2)}_\beta\right|) \, .
\end{multline*} 
The convergence of the sums is ensured by $\Phi_d(1/2) < \infty$, a consequence of Proposition \ref{prop:A_dn}. This metric turns $\dR^{\cX_d} \times \dR^{\cX_d} $ into a complete separable metric space (c.s.m.s. hereafter), and convergence in this metric is equivalent to simple convergence of each coordinate (see \cite{billingsley09}, Example 1.2, p.9). Importantly, convergence in distribution
in $\dR^{\cX_d} \times \dR^{\cX_d} $ amounts to convergence of all finite-dimensional distributions (see \cite{billingsley09}, Example 2.4, p.19). 

Let us then denote by $(k,k')$ a pair of real vectors in $\dR^{\cX_d} \times \dR^{\cX_d}$ with only a finite number of non-zero entries. We write $k \cdot y := \sum_{\beta \in \cX_d} k_{\beta} y_{\beta}$. We also define the following characteristic functions:
\begin{equation}
	\hatdpls(k,k') := \dE\left[ e^{ik \cdot y + i k' \cdot y'} \right] \quad \mbox{ and } \quad
	\hatrs(k,k') := \dE\left[ e^{ik \cdot z + i k' \cdot z'} \right] \, .
\end{equation}
Proving Lemma \ref{lemma:cv_weak_dual} thus amounts to showing the simple convergence $\hatdpls(k,k') \to \hatrs(k,k')$ when $\lambda \to \infty$. Since the (Gaussian) limit distribution is entirely determined by its moments, it suffices to show the convergence of the cumulants (for a proof, see Theorem 4.5.5 in~\cite{Chung}). 

The covariance structure of $(z,z')$ given in \eqref{eq:covariance_z} immediately yields
\begin{equation}\label{eq:cumulant_zz'}
	\hatrs(k,k') = \exp \left[-\frac{1}{2} \sum_{\beta \in \cX_d} ((k_\beta)^2 + (k'_\beta)^2 + 2 s^{|\beta|}k_\beta k'_\beta) \right] \, .
\end{equation}



In view of \eqref{eq:NN'M}, \eqref{eq:y_alpha} and \eqref{eq:y'_alpha}, writing $\fl_{d}(\tau) := (\fl_{d,\beta}(\tau))_{\beta \in \cX_d}$, one has
\begin{multline*}
	e^{ik \cdot y + i k' \cdot y'} = \exp\left[ - \sqrt{\lambda} \sum_{\tau \in \cX_d} \GWl_d(\tau) (i k \cdot \fl_{d}(\tau) + ik'  \cdot \fl_{d}(\tau)) \right] \\
	\times \prod_{\tau, \tau' \in \cX_d} \left( \exp \left[\frac{1}{\sqrt{\lambda}}  (ik \cdot \fl_{d}(\tau) + i k' \cdot \fl_{d}(\tau')) \right]\right)^{M_{\tau,\tau'}} \times \prod_{\tau \in \cX_d} \left( \exp \left[\frac{1}{\sqrt{\lambda}}  ik \cdot \fl_{d}(\tau) \right]\right)^{\Delta_{\tau}} \\
	\times \prod_{\tau \in \cX_d} \left( \exp \left[\frac{1}{\sqrt{\lambda}}  ik' \cdot \fl_{d}(\tau) \right]\right)^{\Delta'_{\tau}}.
\end{multline*} The variables $M_{\tau,\tau'}, \Delta_\tau, \Delta'_\tau$ being independent Poisson variables, taking the expectation gives
\begin{multline*}
	\hatdpls(k,k')  = \exp\left[ - \sqrt{\lambda} \sum_{\tau \in \cX_d} \GWl_d(\tau) (i k \cdot \fl_{d}(\tau) + ik'  \cdot \fl_{d}(\tau)) \right] \\
	\times \exp\left[ \lambda(1-s) \sum_{\tau \in \cX_d} \GWl_d(\tau) \left( e^{\frac{1}{\sqrt{\lambda}} i k \cdot \fl_{d}(\tau)} + e^{\frac{1}{\sqrt{\lambda}} i k' \cdot \fl_{d}(\tau)} - 2 \right) \right]\\
	\times \exp\left[ \lambda s \sum_{\tau, \tau' \in \cX_d} \dPls_{d}(\tau,\tau') \left( e^{\frac{1}{\sqrt{\lambda}} (i k \cdot \fl_{d}(\tau)+ i k' \cdot \fl_{d}(\tau'))} - 1 \right) \right] \, .
\end{multline*}
The cumulants of $(y,y')$ are obtained by expanding the logarithm of the last expression in power series in $k,k'$. Using that $\sum_{\tau' \in \cX_d} \dPls_{d}(\tau,\tau') = \GWl_d(\tau)$, the first-order (linear) terms compensate to $0$, which translates the fact that $\dE[y_\alpha]=\dE[y'_\alpha]=0$. The second-order terms in $\log \hatdpls(k,k')$ evaluate to 
\begin{flalign*}
	& - \lambda (1-s)  \sum_{\tau \in \cX_d} \GWl(\tau) \frac{1}{2 \lambda} \sum_{\beta,\beta' \in \cX_d} \fl_{d,\beta}(\tau) \fl_{d,\beta'}(\tau) \left( k_\beta k_{\beta'} + k'_\beta k'_{\beta'} \right)\\
	& - \lambda s \sum_{\tau, \tau' \in \cX_d}  \dPls_{d}(\tau,\tau')  \\
	& \quad \quad \quad \quad \times \frac{1}{2 \lambda} \sum_{\beta,\beta' \in \cX_d} \left( \fl_{d,\beta}(\tau) \fl_{d,\beta'}(\tau) k_\beta k_{\beta'} + \fl_{d,\beta}(\tau') \fl_{d,\beta'}(\tau') k'_\beta k'_{\beta'} + 2 \fl_{d,\beta}(\tau) \fl_{d,\beta'}(\tau') k_\beta k'_{\beta'} \right)
	\, .
\end{flalign*} Using the orthogonality property \eqref{eq:theorem:eigendecomposition_orthogonality} of the eigenvectors in Theorem \ref{theorem:eigendecomposition}, 
the previous equation simplifies into
\begin{equation*}
	- \frac{1}{2} \sum_{\beta \in \cX_d} \left( (k_\beta)^2 + (k'_\beta)^2 \right) 
	- s \sum_{\tau, \tau' \in \cX_d} \dPls_{d}(\tau,\tau') \sum_{\beta,\beta' \in \cX_d} \fl_{d,\beta}(\tau) \fl_{d,\beta'}(\tau') k_\beta k'_{\beta'} 
	\, ,
\end{equation*} which in turn reads, using $\dPls_{d}(\tau,\tau') = \GWl_{d}(\tau) \GWl_{d}(\tau') \sum_{\gamma \in \cX_d} s^{|\gamma| -1} \fl_{d,\gamma}(\tau) \fl_{d,\gamma}(\tau')$:
\begin{flalign*}
	& - \frac{1}{2} \sum_{\beta \in \cX_d} \left( (k_\beta)^2 + (k'_\beta)^2 \right) 
	- s  \sum_{\beta,\beta',\gamma \in \cX_d} s^{|\gamma| -1} k_\beta k'_{\beta'} \\
	& \quad \quad \quad \quad \quad \times \left( \sum_{\tau \in \cX_d} \GWl_{d}(\tau) \fl_{d,\beta}(\tau) \fl_{d,\gamma}(\tau)  \right) \left( \sum_{\tau' \in \cX_d} \GWl_{d}(\tau') \fl_{d,\beta'}(\tau') \fl_{d,\gamma}(\tau')  \right)\\
	&  \quad \quad \quad \quad \quad \quad \quad \quad \quad  = - \frac{1}{2} \sum_{\beta \in \cX_d} \left( (k_\beta)^2 + (k'_\beta)^2 + 2 s^{|\beta|} k_\beta k'_{\beta} \right) 
	\, ,
\end{flalign*} which is exactly the second cumulant of $(z,z')$ in \eqref{eq:cumulant_zz'}. The remaining step is to show that the higher order cumulants tend to $0$ when $\lambda$ gets large. The terms of order $n$ in the expansion of $\log \hatdpls(k,k')$ in powers of $(k_\beta,k'_\beta)$ depends explicitly on $\lambda$ through the prefactor $\lambda^{1-n/2}$; moreover their implicit dependency through the eigenvectors $\fl_{d,\beta}$ is of the form 
$$ \sum_{t \in \cX_d} \GWl_d(t) \fl_{d,\beta_1}(t) \cdots \fl_{d,\beta_p}(t) \ . $$ These quantities have been proved to remain finite when $\lambda \to \infty$ by property \eqref{eq:theorem:eigendecomposition_mixed_products} of Theorem \ref{theorem:eigendecomposition}. This shows that all the cumulants of $(y,y')$ of order $\geq 3$ tend to $0$ when $\lambda \to \infty$, and hence establishes the desired convergence in distribution.
\end{proof}

\begin{remark}
A hint of the Gaussian convergence in the limit $\lambda \to \infty$ can be a posteriori read from Equation \eqref{eq:theorem:eigendecomposition_mixed_products}.
To explain this point let us first define the Hermite polynomials
\begin{equation*}
	H_m(y) := \sqrt{m!} [x^m] e^{-\frac{1}{2}x^2 - xy} \, ,
\end{equation*} 
that are orthogonal with respect to the Gaussian distribution, i.e. $\dE[H_m(X) H_{m'}(X)] = \one_{m=m'}$ if $X$ is a standard Gaussian random variable. The average of a product of $n$ orthogonal polynomials is studied in the general theory of orthogonal polynomials under the name of linearization coefficient (see e.g. \cite{linearization_op}). For Hermite polynomials one finds
\begin{equation*}
	\dE[H_{m_1}(X) \cdots H_{m_n}(X)] 
	= \sqrt{\prod_{i=1}^{n} m_i!} \, [x_1^{m_1} \, \cdots \, x_n^{m_n}] \exp \left[ \sum_{1 \leq i < j \leq n} x_i x_j \right] \, ,
\end{equation*}
to be compared with the expression \eqref{eq:fl2} in the Charlier case. The right hand side of \eqref{eq:theorem:eigendecomposition_mixed_products} can thus be written as
\begin{equation*}
	\dE\left[\prod_{i=1}^n \prod_{\gamma \in \cX_{d-1}} H_{\beta^{(i)}_\gamma}(y_\gamma) \right] \ ,
\end{equation*}
with $y_\gamma$ i.i.d. standard Gaussian random variables. Comparing this expression with the left hand side of \eqref{eq:theorem:eigendecomposition_mixed_products} suggests that in the limit $\lambda \to \infty$ the eigenvectors $\fl_{d,\beta}$ behave as products of Hermite polynomials, and thus that the measure with respect to which they are orthogonal becomes the product measure of independent Gaussians.
\end{remark}

\subsection{Kullback-Leibler divergence in the high-degree regime}

The following result compares the $\KL-$divergence with finite $\lambda$ to the $\KL-$divergence between the limiting Gaussian distributions of Lemma \ref{lemma:cv_weak_dual}. Together with the weak convergence established in Lemma \ref{lemma:cv_weak_dual}, it will be instrumental to the proof of Theorem \ref{thm:positive_result}. 

\begin{proposition}\label{prop:gaussian_KL_ineq}
Denoting
\begin{equation}\label{eq:theorem:def_KL}
	\KLls_d := \KL(\dPls_d || \dPl_d) \, ,
\end{equation} one has the following:
\begin{equation}\label{eq:prop:gaussian_KL_limitinf} 
	\forall d \geq 1, \; \liminf_{\lambda \to\infty} \KLls_d \geq \KLs_d := - \frac{1}{2}\sum_{\beta \in \cX_{d-1}} \log(1-s^{2 |\beta|}) = \frac{1}{2} \log\Cs_{d,2} \, ,
\end{equation}  where $\Cs_{d,2}$ was defined in Lemma~\ref{corr:cyclic_moments},  and shown  by Equation~\eqref{eq:proof:thm:negative_result} to be equal to
\begin{equation}\label{eq:rappel:KLs}
	\Cs_{d,2} = \dEls_d[L_d] = \sum_{n \geq 1} A_{d,n}(s^2)^{n-1}  \, .
\end{equation}
\end{proposition}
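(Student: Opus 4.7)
The plan is to combine the identity \eqref{eq:equality_of_KL_centered} with the weak convergence established in Lemma \ref{lemma:cv_weak_dual} and the lower semi-continuity of the Kullback--Leibler divergence with respect to weak convergence. By \eqref{eq:equality_of_KL_centered}, $\KLls_d = \KL(\dpls_d \,\|\, \gwl_d \otimes \gwl_d)$, so the task reduces to lower bounding this infinite-dimensional KL divergence as $\lambda \to \infty$. Lemma \ref{lemma:cv_weak_dual} asserts that $(y,y')$ under $\dpls_d$ converges in distribution to the centered Gaussian $(z,z')$ with covariance \eqref{eq:covariance_z}; applying the same lemma at $s=0$, for which $\dpls_d$ reduces to $\gwl_d \otimes \gwl_d$ and the cross-covariances in \eqref{eq:covariance_z} vanish, one obtains that $(y,y')$ under the null converges weakly to the law of $(z,\widetilde z)$ with $\widetilde z$ an independent copy of $z$. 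Denote the two Gaussian limits by $\Pi_s$ and $\Pi_0$ respectively.

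The lower semi-continuity of KL on the Polish space $\dR^{\cX_{d-1}} \times \dR^{\cX_{d-1}}$ (endowed with the metric introduced in the proof of Lemma \ref{lemma:cv_weak_dual}) is a direct consequence of the Donsker--Varadhan variational representation
$$
\KL(P \,\|\, Q) = \sup_{\phi} \left\{ \int \phi\, dP - \log \int e^{\phi}\, dQ \right\},
$$
where the supremum runs over bounded continuous $\phi$. For any such $\phi$, weak convergence ensures that $\int \phi\, d\dpls_d \to \int \phi\, d\Pi_s$ and $\log \int e^{\phi}\, d(\gwl_d \otimes \gwl_d) \to \log \int e^{\phi}\, d\Pi_0$. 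Since the quantity $\int \phi\, d\dpls_d - \log \int e^{\phi}\, d(\gwl_d \otimes \gwl_d)$ is bounded above by $\KLls_d$ for every $\lambda$, passing to the $\liminf$ and then taking the supremum over $\phi$ yields $\KL(\Pi_s \,\|\, \Pi_0) \leq \liminf_{\lambda \to \infty} \KLls_d$.

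It remains to lower bound $\KL(\Pi_s \,\|\, \Pi_0)$. Both measures factorise as infinite products over $\beta \in \cX_{d-1}$: under $\Pi_s$ the pair $(z_\beta, z'_\beta)$ is a centered Gaussian with covariance $\Sigma_\beta$ having unit diagonal and off-diagonal entry $s^{|\beta|}$, while under $\Pi_0$ it is a standard bivariate Gaussian $\cN(0, I_2)$. For every finite truncation $S \subset \cX_{d-1}$, the data-processing inequality applied to the projection on the coordinates indexed by $S$, combined with the classical closed-form for the KL divergence between two centered Gaussians, gives
$$
\KL(\Pi_s \,\|\, \Pi_0) \geq \sum_{\beta \in S} \KL\bigl(\cN(0, \Sigma_\beta) \,\|\, \cN(0, I_2)\bigr) = \sum_{\beta \in S} \left( -\tfrac{1}{2} \log(1-s^{2|\beta|}) \right).
$$
All summands being non-negative, monotone convergence as $S \uparrow \cX_{d-1}$ gives the bound $\KL(\Pi_s \,\|\, \Pi_0) \geq -\tfrac{1}{2}\sum_{\beta \in \cX_{d-1}} \log(1-s^{2|\beta|}) = \KLs_d$. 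The final identity $\KLs_d = \tfrac{1}{2}\log \Cs_{d,2}$ follows immediately from the product expansion $\Phi_d(x) = \prod_{\tau \in \cX_{d-1}} (1-x^{|\tau|})^{-1}$ derived in the proof of Proposition \ref{prop:A_dn}, evaluated at $x = s^2$.

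The main obstacle in this plan is the infinite-dimensional aspect of the weak convergence: although the variational formula makes the lower semi-continuity step structurally identical to the finite-dimensional case, one must carefully verify the convergence of $\log \int e^{\phi}\, d(\gwl_d \otimes \gwl_d)$ for an arbitrary bounded continuous $\phi$, which relies on the convergence being established with respect to the product topology on $\dR^{\cX_{d-1}} \times \dR^{\cX_{d-1}}$ exactly as done in Lemma \ref{lemma:cv_weak_dual}. A secondary technicality is checking that the null distribution is captured by applying Lemma \ref{lemma:cv_weak_dual} at $s=0$, i.e. that this specialisation of the lemma is legitimate and yields the expected independent Gaussian product limit.
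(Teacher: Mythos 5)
Your proof is correct and follows essentially the same route as the paper: reduce to the centered variables via \eqref{eq:equality_of_KL_centered}, invoke the weak convergence of Lemma \ref{lemma:cv_weak_dual}, and combine lower semi-continuity of the Kullback--Leibler divergence with the explicit value of the Gaussian limit KL. The only packaging differences are that you prove lower semi-continuity from scratch via the Donsker--Varadhan representation (the paper cites it as a known result) and obtain the Gaussian KL by data-processing plus monotone convergence rather than by evaluating $-\tfrac{1}{2}\log\det\Sigma$ for the block-diagonal covariance; both are sound and deliver the same bound.
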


\begin{proof}[Proof of Proposition \ref{prop:gaussian_KL_ineq}]
Fix $d \geq 1$. In \eqref{eq:equality_of_KL_centered}, we established that $\KLls_d$ is also the $\KL$-divergence $\KL(\dpls_{d} \| \gwl_{d} \otimes \gwl_{d})$ where $\dpls_{d}$ is the distribution of $(y,y')$ defined in Section \ref{subsection:gaussian_approx}. Moreover, Lemma \ref{lemma:cv_weak_dual} establishes that $(y,y')$ converges in distribution to a centered gaussian vector $(z,z')$ defined by its covariance matrix:
\begin{equation}\label{eq:covariance_z_bis}
	\forall \beta, \beta' \in \cX_{d-1}, \quad \dE[z_{\beta}z_{\beta'}] = \dE[z'_{\beta}z'_{\beta'}] = \one_{\beta = \beta'}, \quad \dE[z_{\beta}z'_{\beta'}] = s^{|\beta|}\one_{\beta = \beta'}.
\end{equation}
If we denote by $p^{(s)}_1$ the joint distribution of the gaussian vector $(z,z')$ and $p^{(s)}_0$ the product of the marginals, the KL-divergence $\KL(p^{(s)}_1 || p^{(s)}_0)$ is easily given by $-\frac{1}{2}\log\det\Sigma$, where $\Sigma$ is the covariance matrix of $(z,z')$, which is similar to a matrix with diagonal blocks of the form $\begin{pmatrix} 1 & s^{|\beta|} \\ s^{|\beta|} & 1\end{pmatrix}$ for all $\beta \in \cX_{d-1}$, which gives 
\begin{flalign*}
	\KL(p^{(s)}_1 || p^{(s)}_0) = - \frac{1}{2} \log \prod_{\beta \in \cX_{d-1}}  (1-s^{2|\beta|}) \, .
\end{flalign*} The last term is indeed $\KLs_d$ as defined in \eqref{eq:prop:gaussian_KL_limitinf}, since
\begin{flalign*}
	\dEls_d[L_d]  = \sum_{\beta \in\cX_{d}} s^{2 (|\beta|-1)}  = \prod_{\beta \in \cX_{d-1}} \sum_{\gamma_\beta \geq 0} s^{2 {\gamma_\beta} |\beta|} = \prod_{\beta \in \cX_{d-1}}  \frac{1}{1-s^{2|\beta|}} \, .
\end{flalign*}

The proof is concluded by appealing to the lower semi-continuity property of the $\KL$-divergence with respect to the weak convergence of its arguments (see e.g. \cite{PolyanskiyLecturenotes}, Theorem 3.6), and to Lemma \ref{lemma:cv_weak_dual}, yielding
\begin{equation*}
	\liminf_{\lambda \to\infty} \KLls_d \geq \KLs_d \, .
\end{equation*}
\end{proof}

{The inequality provided by Proposition \ref{prop:gaussian_KL_ineq} is sufficient to show the positive result of Theorem \ref{thm:positive_result}, which is done in the following section. It is however natural to wonder whether the $\KL-$divergence with finite $\lambda$ actually converges to the $\KL-$divergence between the limiting Gaussian distributions. This is indeed true, and we refer the interested reader to Appendix \ref{appendix:convergence_KL} for a proof.}

\subsection{Propagating bounds on the $\KL-$divergence, proof of Theorem \ref{thm:positive_result}}
The goal of this section is to use the result of Proposition \ref{prop:gaussian_KL_ineq} and the fact that in view of \eqref{eq:rappel:KLs} for $s>\sqrt{\alpha}$ (where $\alpha$ is Otter's constant), $\KLs_d \to +\infty$ with $d$, in order to obtain Theorem \ref{thm:positive_result}, that is that for fixed $s>\sqrt{\alpha}$, there exists $\lambda(s)$ such that one-sided detection is feasible for $\lambda \geq \lambda(s)$.

The following Lemma shows that if $s > \sqrt{\alpha}$, for any small (resp. any large but bounded) probability that we fix, there exists $\lambda_1>0$, a depth $d_0$ and an event $S$ that has this small (resp. large) probability under $\dPl_{d_0}$ (resp. $\dPls_{d_0}$) for $\lambda \geq \lambda_1$. The proof is deferred to Appendix \ref{appendix:proof:lemma:initialize_lower_bound_on_KL}.


\begin{lemma}\label{lemma:initialize_lower_bound_on_KL}
Assume that $s> \sqrt{\alpha}$. Then for any $c\in (0,2/15)$ and any $\eps\in (0,1)$, there exists $\lambda_1=\lambda_1(s,c,\eps)>0$ and $d_0 = d_0(s,c,\eps)\in\dN$ such that, for all $\lambda\geq \lambda_1$, there exists an event $S =S(s,c,\eps) \subset \cX_{d_0}^2$ for which the following inequalities hold:
\begin{equation*}
	\dPls_{d_0}( S)\geq c \quad \mbox{and} \quad \dPl_{d_0}(S)\leq \eps \, .
\end{equation*}
\end{lemma}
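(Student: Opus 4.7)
My plan is to combine the Gaussian convergence (Lemma \ref{lemma:cv_weak_dual}) with a Chebyshev bound on the log-likelihood ratio, and to transfer the estimate to finite $\lambda$ via the uniform $L^2$-control provided by Corollary \ref{corr:cyclic_moments}. The candidate event will be
$$S := \left\{(t,t')\in\cX_{d_0}^2 : L_{d_0}(t,t') \geq 1/\eps\right\}$$
for a depth $d_0$ to be chosen.

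I would first analyse the limiting Gaussian model. Denote by $p_0^{(s)}$ and $p_1^{(s)}$ the null and correlated laws of $(z_\beta, z'_\beta)_{\beta \in \cX_{d-1}}$ from Lemma \ref{lemma:cv_weak_dual}, and set $L_\infty := \mathrm{d}p_1^{(s)}/\mathrm{d}p_0^{(s)}$. By independence across $\beta$ the log-likelihood decouples, $\log L_\infty = \sum_{\beta \in \cX_{d-1}} X_\beta$, where each $X_\beta$ is an explicit quadratic form in $(z_\beta, z'_\beta)$. A direct Isserlis-type computation under $p_1^{(s)}$ yields
\begin{equation*}
    \dE_{p_1^{(s)}}[X_\beta] = -\tfrac12 \log(1-s^{2|\beta|}),\qquad \Var_{p_1^{(s)}}(X_\beta) = s^{2|\beta|}.
\end{equation*}
Set $\mu_d := \KLs_d$ and $\sigma_d^2 := \sum_{\beta \in \cX_{d-1}} s^{2|\beta|}$. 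The elementary inequality $-\log(1-x) \geq x$ on $[0,1)$ gives $\mu_d \geq \sigma_d^2/2$. Since $s > \sqrt{\alpha}$, the Otter-type estimate used in the proof of Theorem \ref{thm:negative_result} yields $\sigma_d^2 \to \infty$ and $\mu_d \to \infty$ as $d \to \infty$, so in particular $\sigma_d^2/\mu_d^2 \leq 4/\sigma_d^2 \to 0$. Chebyshev's inequality then gives
\begin{equation*}
    p_1^{(s)}\!\left(\log L_\infty \geq \mu_d/2\right) \geq 1 - \frac{4\sigma_d^2}{\mu_d^2} \underset{d\to\infty}{\longrightarrow} 1.
\end{equation*}
For any fixed $c < 2/15$ and $\eps \in (0,1)$, I would then choose $d_0$ such that $\mu_{d_0}/2 \geq \log(1/\eps)$ and $p_1^{(s)}(L_\infty \geq 1/\eps) \geq c + \delta$ for some $\delta > 0$.

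To transfer back to finite $\lambda$ I proceed as follows. The null bound is immediate: $\dE_{\dPl_{d_0}}[L_{d_0}]=1$ and Markov's inequality give $\dPl_{d_0}(S) \leq \eps$. For the correlated side, write $\dPls_{d_0}(S) = \dE_{\dPl_{d_0}}[L_{d_0} \one\{L_{d_0} \geq 1/\eps\}]$. Corollary \ref{corr:cyclic_moments} yields $\dE_{\dPl_{d_0}}[L_{d_0}^2] = \Cs_{d_0,2}$, finite and independent of $\lambda$, so the family $(L_{d_0})_\lambda$ is uniformly integrable under $\dPl_{d_0}$. Combined with the weak convergence of $L_{d_0}$ under $\dPl_{d_0}$ to $L_\infty$ under $p_0^{(s)}$ (Proposition \ref{proposition:KL_convergence}(i), applied with $p_{0,\lambda} = \gwl_{d_0}\otimes\gwl_{d_0}$ and $p_{1,\lambda} = \dpls_{d_0}$) and the absence of atoms in the law of $L_\infty$, a Portmanteau-type argument produces $\dPls_{d_0}(S) \to p_1^{(s)}(L_\infty \geq 1/\eps) \geq c+\delta$ as $\lambda\to\infty$. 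Consequently $\dPls_{d_0}(S) \geq c$ for all $\lambda$ beyond some $\lambda_1(s,c,\eps)$, concluding the proof.

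The main obstacle lies precisely in this final transfer step: one has to upgrade weak convergence under the null to convergence of the expectation against the \emph{unbounded} likelihood, for which the uniform $L^2$-bound from Corollary \ref{corr:cyclic_moments} is the key ingredient. The bound $c<2/15$ is quite lax under this Chebyshev route (any $c<1$ is in fact attainable in the Gaussian limit), so the authors may well have followed a slightly different quantitative path that produces the explicit constant $2/15$.
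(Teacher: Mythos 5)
Your proof is correct, and it follows a genuinely different route from the paper's. The paper works directly at finite $\lambda$: it uses Proposition~\ref{prop:gaussian_KL_ineq} to get $\KLls_{d_0}\geq\frac12\KLs_{d_0}$ for $\lambda$ large, then extracts a lower bound on $\dPls_{d_0}(L_{d_0}\geq A)$ via a Paley--Zygmund--type splitting of the integral $\int_1^\infty u^{-1}\dPls_{d_0}(L_{d_0}\geq u)\,du$ into three ranges $[1,A]$, $[A,B]$, $[B,\infty)$, controlling the tail with the second moment $\Cs_{d_0,2}$; the specific choice $A=(\Cs_{d_0,2})^{1/16}$, $B=16\Cs_{d_0,2}/\log\Cs_{d_0,2}$ is what produces the constant $2/15$ in the asymptotic lower bound. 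You instead carry out the concentration argument entirely in the Gaussian limit (Chebyshev on $\log L_\infty$, using $\Var_{p_1^{(s)}}(\log L_\infty)=\sigma_d^2$ and $\mu_d\geq\sigma_d^2/2$), and then pull the probability estimate back to finite $\lambda$ via the weak convergence of $L_{d_0}$ under $\dPl_{d_0}$ furnished by Proposition~\ref{proposition:KL_convergence}(i), together with the uniform $L^2$ bound and the atomlessness of $L_\infty$. This is clean, and as you note it proves more: any $c<1$ is reachable, not just $c<2/15$; the paper's constant $2/15$ is an artifact of performing the integral splitting at finite $\lambda$ with only a factor-$\frac12$ comparison to the Gaussian KL. The trade-off is that your argument leans on the strictly stronger convergence results (Proposition~\ref{proposition:KL_convergence}) rather than the $\liminf$ bound of Proposition~\ref{prop:gaussian_KL_ineq}, which the authors explicitly flag as sufficient for their purposes.

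Two small points worth filling in. First, the Portmanteau transfer for the discontinuous, unbounded test function $g(x)=x\one\{x\geq 1/\eps\}$ should be spelled out: truncate $g$ at level $M$, use the uniform $L^2$ bound $\dE_{\dPl_{d_0}}[L_{d_0}^2]=\Cs_{d_0,2}$ to control the tail $\int x\one\{x>M\}\,d\mu_\lambda\leq M^{-1}\Cs_{d_0,2}$ uniformly in $\lambda$, and invoke the fact that $\mu$ puts no mass on $\{1/\eps,M\}$ so the truncated function is $\mu$-a.e.\ continuous. Second, the atomlessness of $L_\infty$ under $p_0^{(s)}$ deserves a sentence: for fixed $d_0$ and $s\in(0,1)$, $\log L_\infty=\sum_\beta X_\beta$ is an a.s.-convergent sum (the series of means and variances are finite because $\Phi_{d_0-1}(s^2)<\infty$) of independent absolutely continuous random variables, each being a non-degenerate quadratic form in a Gaussian pair, so the law of $L_\infty$ has no atoms. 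Neither of these is a gap in the idea, just standard details to include.
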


Now that we know that this event $S$ exists at a certain initial depth $d_0$, we want to propagate similar bounds for arbitrary depth $d \geq d_0$. This is the object of the following Proposition, proved in Appendix \ref{appendix:proof:prop:bounding_LR_by_induction}.

\begin{proposition}\label{prop:bounding_LR_by_induction}
For any fixed  $c\in(0,1)$ there exist constants $\eps=\eps(s,c) \in (0,1)$ and $\lambda_0 = \lambda_0(s,c)>0$ such that the following holds. For any $\lambda\geq \lambda_0$, any $d\in\dN$, if there exists an event $S\subset \cX_d^2$ such that
\begin{equation*}
	\dPl_d(S)\leq \eps \quad \mbox{and} \quad \dPls_d(S)\geq c \, ,
\end{equation*} then there exists an event $S'\subset \cX_{d+1}^2$ such that 
\begin{equation*}
	\dPl_{d+1}(S')\leq \frac{1}{2}\dPl_{d}(S) \leq \frac{\eps}{2}  \quad \mbox{and} \quad \dPls_{d+1}(S')\geq c \, .
\end{equation*}

In fact, with the usual notations $t=\{N_\tau\}_{\tau\in\cX_d}$, $t'=\{N'_\tau\}_{\tau\in\cX_d}$ for elements of $\cX_{d+1}$, and denoting, for all $\tau\in \cX_d$
\begin{equation*}
	\widetilde{N}_\tau:=N_\tau -\lambda \GWl_d(\tau) \quad \mbox{and} \quad \widetilde{N}'_\tau= N'_\tau-\lambda \GWl_d(\tau)\, ,
\end{equation*}
the event $S'$ in the above is defined from $S$ in the following way :
\begin{equation}\label{eq:def_Z}
	S'=\set{Z_S \geq \sigma} \, , \ \text{where} \qquad
	Z_S :=\sum_{(\tau,\tau')\in S}\widetilde{N}_\tau \widetilde{N}'_{\tau'} \, ,
\end{equation} for some suitable threshold $\sigma = \sigma(S)$.
\end{proposition}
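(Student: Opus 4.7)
The plan is to base the test on the bilinear statistic $Z_S = \sum_{(\tau,\tau')\in S}\widetilde N_\tau \widetilde N'_{\tau'}$ with threshold $\sigma := \tfrac{\lambda s c}{2}$. The key first-order moments follow directly from Definition~\ref{def:correlated_model}: under the null, $\widetilde N_\tau$ and $\widetilde N'_{\tau'}$ are centered independent Poisson variables with variances $\lambda \GWl_d(\tau)$ and $\lambda \GWl_d(\tau')$, so that
\begin{equation*}
\dEl_{d+1}[Z_S] = 0, \qquad \dEl_{d+1}[Z_S^2] = \lambda^2 \dPl_d(S);
\end{equation*}
under the correlated model, $\Cov(N_\tau,N'_{\tau'}) = \lambda s\,\dPls_d(\tau,\tau')$, which gives $\dEls_{d+1}[Z_S] = \lambda s \,\dPls_d(S)\geq \lambda s c$ together with $\Varls_{d+1}(Z_S) \le C_1 \lambda^2 \dPl_d(S) + o(\lambda^2)$. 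The variance bound is most transparent in the dual basis of Section~\ref{subsection:gaussian_approx}, in which $Z_S/\lambda$ becomes the bilinear form $\sum_{\beta,\beta'} a_{\beta,\beta'} y_\beta y'_{\beta'}$ with squared Frobenius norm $\sum_{\beta,\beta'}a_{\beta,\beta'}^2 = \dPl_d(S)$ (this identity follows from the orthogonality relations \eqref{eq:theorem:eigendecomposition_orthogonality_2} of Theorem~\ref{theorem:eigendecomposition}).

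The alternative side is then a standard Chebyshev computation: since $\dEls_{d+1}[Z_S]-\sigma\ge \lambda s c/2$,
$$ \dPls_{d+1}(Z_S < \sigma) \;\le\; \frac{\Varls_{d+1}(Z_S)}{(\dEls_{d+1}[Z_S]-\sigma)^2}\;\le\; \frac{4 C_1 \dPl_d(S)}{s^2 c^2}+o_\lambda(1)\;\le\;1-c$$
provided $\eps = \eps(s,c)$ is chosen small enough and $\lambda \ge \lambda_0(s,c)$ is large enough, yielding $\dPls_{d+1}(S')\ge c$.

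The critical step is the null tail bound $\dPl_{d+1}(S')\le \tfrac{1}{2}\dPl_d(S)$: a naive Chebyshev only delivers $\lambda^2 \dPl_d(S)/\sigma^2 = 4\dPl_d(S)/(sc)^2$, which is missing the required factor $\tfrac12$. To close this gap I would establish a fourth-moment estimate of the form
$$ \dEl_{d+1}[Z_S^4] \;\le\; C_2 \,\lambda^4\, \dPl_d(S)^2 ,$$
obtained by expanding $Z_S^4$ into an eight-fold sum over $S^4$ and showing, using independence and the explicit Poisson moment identities, that the leading contributions come from the Wick-type pairings partitioning the eight indices into four matched pairs (two on each side), contributing the Gaussian quantity $3(\dEl_{d+1}[Z_S^2])^2 = 3\lambda^4 \dPl_d(S)^2$; the remaining terms, coming from three- or four-fold coincidences, are lower order in $\lambda$ and can be absorbed once $\lambda\ge \lambda_0(s,c)$. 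Markov's inequality then gives
$$ \dPl_{d+1}(Z_S\ge\sigma)\;\le\;\frac{\dEl_{d+1}[Z_S^4]}{\sigma^4}\;\le\;\frac{16 C_2}{s^4 c^4}\dPl_d(S)^2\;\le\;\frac{16 C_2 \eps}{s^4 c^4}\,\dPl_d(S),$$
so that choosing $\eps$ with $32 C_2 \eps \le s^4 c^4$ yields $\dPl_{d+1}(S')\le \tfrac{1}{2}\dPl_d(S)$, completing the induction step.

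The main technical obstacle is to make this fourth-moment estimate rigorous with constants uniform in the depth $d$ and in the specific event $S$: the Gaussian approximation of Section~\ref{subsection:gaussian_approx} pinpoints the correct leading-order behaviour (Hanson--Wright-type scaling for bilinear forms), but the sub-leading Poisson corrections — in particular the contributions of low-probability subtrees $\tau$ with $\GWl_d(\tau)$ small, where the Gaussian proxy is least accurate — require careful bookkeeping. The choice of $\lambda_0(s,c)$ is precisely what absorbs these corrections, and the same combinatorial analysis yields the matching variance bound $\Varls_{d+1}(Z_S) \le C_1 \lambda^2 \dPl_d(S) + o(\lambda^2)$ used on the alternative side.
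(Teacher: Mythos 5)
Your approach is essentially the same as the paper's: use the bilinear statistic $Z_S$, bound the null tail via a fourth-moment Markov inequality, and bound the alternative tail via Chebyshev with a variance estimate. The Frobenius-norm identity $\sum_{\beta,\beta'}a_{\beta,\beta'}^2=\dPl_d(S)$ is correct and does give the right null variance $\dEl_{d+1}[Z_S^2]=\lambda^2\dPl_d(S)$. The only structural difference from the paper is that you use a fixed threshold $\sigma=\lambda sc/2$ whereas the paper uses an adaptive one $\sigma=\max\bigl(4\lambda\dPl_d(S)^{1/4},\,3\lambda^{3/4}\bigr)$; both choices work.

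There is however a genuine error in your fourth-moment claim. The bound $\dEl_{d+1}[Z_S^4]\leq C_2\,\lambda^4\,\dPl_d(S)^2$ with $C_2$ uniform in $d$ and $S$ is false, and the assertion that the non-pairing (three- and four-fold coincidence) contributions ``are lower order in $\lambda$ and can be absorbed once $\lambda\ge\lambda_0(s,c)$'' is precisely where the argument breaks. These Poisson corrections are of size $\Theta(\lambda^3\dPl_d(S))$ (e.g. from the term $\sum_{(\tau,\tau')\in S}\dEl[\widetilde N_\tau^4]\dEl[\widetilde N'^4_{\tau'}]$, whose Poisson fourth moments produce a cross term $\lambda^3 \GWl_d(\tau)^2\GWl_d(\tau')$, summing to $\leq\lambda^3\dPl_d(S)$). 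The ratio of this to your claimed leading term is $1/(\lambda\,\dPl_d(S))$, which for a \emph{fixed} $\lambda_0$ blows up as $\dPl_d(S)\to 0$. This matters because in the induction of Theorem~\ref{thm:positive_result} one has $\dPl_d(S_d)\leq\eps\,2^{-(d-d_0)}$, so $\dPl_d(S)$ becomes arbitrarily small; the correction term is then the \emph{dominant} one, and no choice of $\lambda_0(s,c)$ can absorb it into $\lambda^4\dPl_d(S)^2$. The correct bound (cf.\ Lemma~\ref{lemma_bounds_Z}(iii)) is $\dEl_{d+1}[Z_S^4]\leq C_2\lambda^4\dPl_d(S)^2 + C_3\lambda^3\dPl_d(S)$. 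Fortunately your scheme is salvageable with the corrected bound: dividing by $\sigma^4=(\lambda sc/2)^4$ gives
\begin{equation*}
\dPl_{d+1}(Z_S\geq\sigma)\;\leq\;\frac{16 C_2}{s^4c^4}\,\dPl_d(S)^2 + \frac{16 C_3}{\lambda\, s^4c^4}\,\dPl_d(S)\,,
\end{equation*}
and the first term is controlled by taking $\eps$ small while the second is controlled by taking $\lambda_0$ large --- each term separately made $\leq\tfrac14\dPl_d(S)$ --- so the conclusion $\dPl_{d+1}(S')\leq\tfrac12\dPl_d(S)$ is still reached. The moral is that the $\lambda^3\dPl_d(S)$ term cannot be dropped from the moment estimate, but it is still small enough \emph{after} dividing by $\sigma^4$, and that is the mechanism that actually closes the induction, both in your argument and in the paper's.
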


Together, Lemma \ref{lemma:initialize_lower_bound_on_KL} and Proposition \ref{prop:bounding_LR_by_induction} yield the proof of Theorem \ref{thm:positive_result}.

\begin{proof}[Proof of Theorem \ref{thm:positive_result}]
Assume that $s > \sqrt{\alpha}$.
Choose $c\in(0,2/15)$ and let $\eps=\eps(s,c)$, $\lambda_0=\lambda_0(s,c)$ be the corresponding quantities from Proposition \ref{prop:bounding_LR_by_induction}. Now that $c,\eps$ are fixed, we appeal to  Lemma \ref{lemma:initialize_lower_bound_on_KL} to obtain some $\lambda_1=\lambda_1(s,c,\eps)$ and $d_0 = d_0(s,c,\eps) \in\dN$ such that, taking $\lambda\geq \lambda_0 \vee \lambda_1$, there exists some event $S_{d_0}\subset\cX_{d_0}^2$ such that
\begin{equation*}
	\dPl_{d_0}(S)\leq \eps \quad \mbox{and} \quad \dPls_{d_0}(S)\geq c \, .
\end{equation*} Proposition \ref{prop:bounding_LR_by_induction} then ensures the existence of a sequence of events $S_d\subset \cX_d^2$, $d>d_0$ such that
\begin{equation*}
	\dPl_d(S_d)\leq 2^{-(d-d_0)}\eps \quad \mbox{and} \quad \dPls_d(S_d) \geq c \, ,
\end{equation*}
which shows that the test $\cT_d = \one_{S_d}$ achieves one-sided detection.

\end{proof}

\section*{Acknowledgments}
The authors would like to thank Marc Lelarge and Guillaume Barraquand for useful discussions. 
The first author was supported by the French government under management of Agence Nationale de la Recherche as part of the “Investissements d’avenir” program, reference ANR19-P3IA-0001 (PRAIRIE 3IA Institute).

\bibliographystyle{plain}

\bibliography{biblio}

\begin{appendix}
\section{Convergence of $\KL_d$ in the high degree regime}\label{appendix:convergence_KL}

{As mentioned earlier, the inequality provided in Proposition \ref{prop:gaussian_KL_ineq} is sufficient to show the positive result of Theorem \ref{thm:positive_result}. However, we are able to prove a stronger version of Proposition \ref{prop:gaussian_KL_ineq}, in line with the intuition: the likelihood-ratio converges weakly under $\dPl_d$ when $\lambda \to \infty$, and the $\KL-$divergence with finite $\lambda$ converges to the $\KL-$divergence between the limiting Gaussian distributions of Lemma \ref{lemma:cv_weak_dual}. To show this convergence, we prove the following result, established in the more general setting of complete separable metric spaces.}

\begin{proposition}\label{proposition:KL_convergence}
Let $\cX$ be a complete separable metric space (c.s.m.s) endowed with its Borel $\sigma$-field. Let for all $\lambda>0, p_{0,\lambda}, p_{1,\lambda}$ be two probability measures on $\cX$ such that $p_{1,\lambda}\ll p_{0,\lambda}$, with $\dE_{0,\lambda}[\cdot]$ and $\dE_{1,\lambda}[\cdot]$ the corresponding expectations. Denote then by $L^{(\lambda)}$ the likelihood ratio $\frac{\mathrm{d}p_{1,\lambda}}{\mathrm{d}p_{0,\lambda}}$. Assume that as $\lambda\to\infty$, one has the weak convergences 
\begin{equation}\label{eq:proposition:KL_convergence:cv_distribution}
	p_{0,\lambda} \overset{w}{\longrightarrow} p_0\hbox{  and }p_{1,\lambda}\overset{w}{\longrightarrow}p_1 \, .
\end{equation}
Assume also that for some finite constant $c>0$ one has
\begin{equation}\label{eq:proposition:KL_convergence:second_moment_constant}
	\forall \lambda>0,\; \dE_{0,\lambda}[ (L^{(\lambda)})^2]\leq c \, .
\end{equation} Then $p_1 \ll p_0$, and if we assume further that the likelihood ratio $L := \frac{\mathrm{d}p_1}{\mathrm{d}p_0}$ satisfies
\begin{equation}\label{eq:proposition:KL_convergence:second_moment_cv}
	\dE_{0,\lambda}[ (L^{(\lambda)})^2] \underset{\lambda \to \infty}{\longrightarrow} \dE_{0} [ L^2 ] \, ,
\end{equation} it then follows that:
\begin{itemize}
	\item[$(i)$] The distribution of $L^{(\lambda)}$ under $p_{0,\lambda}$ converges weakly as $\lambda\to\infty$ to the distribution of $L$ under $p_0$;
	\item[$(ii)$] \begin{equation}\label{eq:proposition:KL_convergence:results}
		\KL(p_{1,\lambda} || p_{0,\lambda}) \underset{\lambda \to \infty}{\longrightarrow} \KL(p_1 || p_0) \, .
	\end{equation}
\end{itemize}
\end{proposition}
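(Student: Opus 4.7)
The plan is to work throughout with the joint laws $\mu_\lambda$ of $(X, L^{(\lambda)}(X))$ on $\cX \times \dR_+$ under $p_{0,\lambda}$, and to identify their weak limit using the $L^2$ information. First, I would establish the absolute continuity $p_1 \ll p_0$ as follows: for $f \in C_b(\cX)$ with $f \geq 0$, Cauchy--Schwarz combined with \eqref{eq:proposition:KL_convergence:second_moment_constant} gives $\int f \, dp_{1,\lambda} \leq \sqrt{c} \, \|f\|_{L^2(p_{0,\lambda})}$, and passing to the limit through \eqref{eq:proposition:KL_convergence:cv_distribution} (applied to $f$ and to $f^2$) yields $\int f \, dp_1 \leq \sqrt{c} \, \|f\|_{L^2(p_0)}$. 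Since $\cX$ is a c.s.m.s., $C_b(\cX)$ is dense in $L^2(p_0)$ and the functional $f \mapsto \int f\, dp_1$ extends to a continuous linear functional there; by Riesz representation it is given by some $L \in L^2(p_0)$ with $\|L\|_{L^2(p_0)} \leq \sqrt{c}$, so $p_1 = L \cdot p_0$.

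Under the further assumption \eqref{eq:proposition:KL_convergence:second_moment_cv}, I would establish (i) through tightness of $\{\mu_\lambda\}$ and identification of its limit. Tightness on $\cX \times \dR_+$ follows from tightness of the marginals on $\cX$ (given) and on $\dR_+$ (from Chebyshev applied to the uniform $L^2$ bound). Any weak subsequential limit $\mu$ has marginal $p_0$ on $\cX$, so one can disintegrate $\mu(dx, d\ell) = p_0(dx) K(x, d\ell)$. Uniform integrability of $\ell$ under $\mu_\lambda$ (from the $L^2$ bound) allows passing to the limit in $\int f(x) \ell \, d\mu_\lambda = \int f \, dp_{1,\lambda}$ for $f \in C_b(\cX)$, yielding $\int \ell \, K(x, d\ell) = L(x)$ for $p_0$-a.e.\ $x$. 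For the second moment, Fatou's lemma together with \eqref{eq:proposition:KL_convergence:second_moment_cv} gives $\int \ell^2 \, d\mu \leq \dE_0[L^2]$, whereas the fiberwise Jensen inequality $\int \ell^2 K(x,d\ell) \geq L(x)^2$, integrated against $p_0$, provides the reverse inequality. Equality in Jensen for $p_0$-a.e.\ $x$ then forces $K(x,\cdot) = \delta_{L(x)}$, so $\mu$ is the pushforward of $p_0$ under $x \mapsto (x, L(x))$. This uniqueness of the subsequential limit gives weak convergence of the full sequence, and projecting onto the second coordinate yields~(i).

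For (ii), I would write $\KL(p_{1,\lambda} \| p_{0,\lambda}) = \dE_{0,\lambda}[L^{(\lambda)} \log L^{(\lambda)}]$ and pass this expectation to the limit. The standard fact that weak convergence of real random variables together with convergence of their second moments implies $L^2$-uniform integrability yields that $(L^{(\lambda)})^2$ is uniformly integrable under $p_{0,\lambda}$; the elementary bound $|\ell \log \ell| \leq 1/e + \ell^2$ then transfers this to uniform integrability of $L^{(\lambda)} \log L^{(\lambda)}$, and combining with (i) and the continuous mapping theorem yields $\KL(p_{1,\lambda}\| p_{0,\lambda}) \to \dE_0[L \log L] = \KL(p_1\| p_0)$.

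The principal difficulty lies in Step 2: the $L^2$ bound alone only produces the first conditional moment $\int \ell \, K(x, d\ell) = L(x)$, which does not pin down $K$. The assumption \eqref{eq:proposition:KL_convergence:second_moment_cv} is exactly what enables the Jensen equality-case argument to collapse the conditional kernel to a Dirac mass, and this collapse is the crux of the proof.
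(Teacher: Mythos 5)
Your proof is correct and follows essentially the same route as the paper's: lift to the joint law of $(X,L^{(\lambda)}(X))$ on $\cX\times\dR_+$, apply Prokhorov, identify the conditional mean of $\ell$ given $X$ as $L$ via uniform integrability, and then pin the conditional law down to a Dirac mass using the second-moment convergence (fiberwise Jensen in your phrasing, the conditional variance formula in the paper's — these are the same fact). The remaining differences are cosmetic: you get $p_1\ll p_0$ up front via Cauchy--Schwarz and Riesz representation in $L^2(p_0)$ (the paper extracts it directly from the identification of the first conditional moment, making your step somewhat redundant), and for $(ii)$ you invoke Vitali/Scheff\'e to get uniform integrability of $(L^{(\lambda)})^2$, whereas the paper bounds $\dE_{0,\lambda}[|L^{(\lambda)}\log L^{(\lambda)}|^{1+\eps}]$ uniformly using only the $L^2$ bound \eqref{eq:proposition:KL_convergence:second_moment_constant}; both are valid here.
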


\begin{remark}
Note that the assumption \eqref{eq:proposition:KL_convergence:second_moment_cv} in the previous Proposition is crucial. To see this, consider the following counter-example. Set $p_{0,\lambda}=p_0$ to be the uniform distribution on $[0,1]$. Then, for $\lambda \geq 1$, we subdivide the interval $I=[0,1]$ in $n = 2 \lfloor \lambda \rfloor$ intervals of the form $\left( \frac{i}{n}, \frac{i+1}{n}\right)$, and define $p_{1,\lambda}$ to be absolutely continuous with respect to $p_0$, with a density $1/2$ (resp. $3/2$) on the interval $\left( \frac{i}{n}, \frac{i+1}{n}\right)$ when $i$ is even (resp. $i$ is odd). In this setting, the likelihood ratio is bounded, $p_{1,\lambda}$ converges weakly to $p_1=p_0$, hence $\KL(p_1 || p_0)=0$, but for all finite $\lambda \geq 1$, 
$$ \KL(p_{1,\lambda} || p_{0,\lambda}) = \frac{1}{4} \log(1/2) + \frac{3}{4} \log(3/2)$$ 
which is strictly positive, so that \eqref{eq:proposition:KL_convergence:results} does not hold. 
\end{remark}

\begin{proof}[Proof of Proposition \ref{proposition:KL_convergence}]
	Recall that $L^{(\lambda)}$ denotes the likelihood ratio $\frac{\mathrm{d}p_{1,\lambda}}{\mathrm{d}p_{0,\lambda}}$.
	
	\underline{\textit{Proof of the fact that $p_1 \ll p_0$}}
	Let us consider the family of measures $\set{\cQ^{(\lambda)}}_\lambda$ where $\cQ^{(\lambda)}$ is the pushforward of $p_{0,\lambda}$ on $\cX \times \dR_+$, endowed with the product measure/topology, by the mapping $\omega \mapsto (\omega,L^{(\lambda)}(\omega))$. In the sequel we denote $\hat{\cF}$ the product of the Borel $\sigma$-algebra on $\cX$ with the trivial sigma-algebra on $\dR_+$.
	
	Thanks to the weak convergence of $p_{0,\lambda}$ and to uniform integrability of $\{L^{(\lambda)}\}_\lambda$, which follows from assumption \eqref{eq:proposition:KL_convergence:second_moment_constant}, the two families of marginals are tight and hence the family $\set{\cQ^{(\lambda)}}_\lambda$ is tight.
	
	Since $\cX$ is a c.s.m.s., so is $\cX\times \dR_+$ and Prokhorov's theorem applies: for any sequence $\lambda_k$ diverging to $+\infty$, one can extract a subsequence $(\lambda_{\phi(k)})_k$ along which  $\cQ^{(\lambda_{\phi(k)})}$ converges weakly to a limit. Let us denote such a weak limit by $\cQ$, a distribution on $\cX \times \dR_+$. Since by assumption $p_{0,\lambda} \overset{w}{\longrightarrow} p_0$, the marginal of $\cQ$ on $\cX$ is necessarily $p_0$.
	
	By uniform integrability of the variables $\set{L^{(\lambda)}(\omega)}_\lambda$, for any bounded continuous function $h: \cX \to \dR$, the variables $\set{h(\omega) L^{(\lambda_{\phi(k)})}(\omega)}_k$ are also uniformly integrable, and
	
	\begin{equation}\label{eq:limit_hL_first}
		\dE_{0,\lambda_{\phi(k)}}[h(\omega) L^{(\lambda_{\phi(k)})}(\omega)] \underset{k \to \infty}{\longrightarrow}  \dE_{(\omega,z)\sim\cQ}[h(\omega)z] \, .
	\end{equation}
	On the other hand 
	\begin{equation}\label{eq:limit_hL_second}
		\dE_{0,\lambda_{\phi(k)}}[h(\omega) L^{(\lambda_{\phi(k)})}(\omega)] = \dE_{1,\lambda_{\phi(k)}}[h(\omega)] \underset{k \to \infty}{\longrightarrow} \dE_{1}[h(\omega)] \, .
	\end{equation}  
	Introducing the variable $\hat{z}(\omega) := \dE_{(\omega,z)\sim \cQ}[z \, | \, \hat{\cF}]$, the r.h.s. of \eqref{eq:limit_hL_first} also reads 
	$$
	\dE_{(\omega,z)\sim\cQ}[h(\omega)z] = \dE_{(\omega,z)\sim \cQ}[h(\omega) \dE_{(\omega,z)\sim \cQ} [z \, | \, \hat{\cF}] ] = \dE_{(\omega,z)\sim \cQ}[h(\omega)\hat{z}(\omega)]=\dE_{\omega\sim p_0}[h(\omega)\hat{z}(\omega)] \, .
	$$
	Hence, identifying limits in \eqref{eq:limit_hL_first} and \eqref{eq:limit_hL_second} gives that $p_1 \ll p_0$ and that $\hat{z}$ is $p_0-$almost surely the Radon-Nikodym derivative $\frac{\mathrm{d}p_1}{\mathrm{d}p_0}$, which is essentially unique, thus does not depend on the particular choice of subsequence $(\lambda_{\phi(k)})_k$ and weak limit $\cQ$. In line with the statement of the Proposition, we shall denote $L := \frac{\mathrm{d}p_1}{\mathrm{d}p_0}$ in the sequel.
	
	\underline{\textit{Proof of $(i)$}} We are now ready to prove $(i)$, namely that the distribution of $L^{(\lambda)}$ under $p_{0,\lambda}$ converges weakly as $\lambda\to\infty$ to the distribution of $L$ under $p_0$. As previously, consider a subsequence 
	$(\lambda_{\phi'(k)})_k$ along which $\cQ^{(\lambda_{\phi'(k)})}$ converges weakly to $\cQ'$. By the same arguments as before, one has
	\begin{equation}\label{eq:limit_hL_third}
		\dE_{0,\lambda_{\phi'(k)}}[h(\omega) L^{(\lambda_{\phi'(k)})}(\omega)] \underset{k \to \infty}{\longrightarrow}  \dE_{(\omega,z')\sim\cQ'}[h(\omega)z'] \, ,\end{equation}
	and $\dE_{\cQ'}[ z' \, | \, \hat{\cF}] = L$. By Fatou's Lemma,
	\begin{equation}\label{eq:moment_carre_un}
		\liminf_k \dE_{0,\lambda_{\phi'(k)}}[(L^{(\lambda_{\phi'(k)})})^2] = 2 \liminf_k \int_{0}^{+ \infty} x p_{0,\lambda_{\phi'(k)}}(L^{(\lambda_{\phi'(k)})} \geq x) \, \mathrm{d}x \geq \dE_{(\omega,z')\sim\cQ'}[(z')^2] \, .
	\end{equation}
	
	Since $L$ is $\hat{\cF}-$measurable then in turn $\dE_{\cQ'}[ z' \, | \, L ] = L$.  By the conditional variance fomula,
	$$
	\Var_{\cQ'}[z'] = \dE_{\cQ'}[\Var_{\cQ'}[z'\, | \, L ]]+\Var_{\cQ'}[L] 
	$$
	so that 
	\begin{equation}\label{eq:moment_carre_deux}
		\Var_{\cQ'}[z']\ge \Var_{\cQ'}[L] 
	\end{equation} 
	with equality if and only if $z'$ is $\hat{\cF}-$measurable, that is if $z'=L$, $\cQ'$-almost surely. 
	
	On the other hand, by \eqref{eq:moment_carre_un} and assumption  \eqref{eq:proposition:KL_convergence:second_moment_cv} one also has
	$$ \dE_{\cQ'}[(z')^2] \leq \liminf_k \dE[(L^{(\lambda_{\phi'(k)})})^2] = \dE[L^2] \, .  $$
	Since $L$ is a likelihood ratio one has $\dE_\cQ[L]=1$, and taking $h =1$ in \eqref{eq:limit_hL_third} gives $\dE_{\cQ'}[z']=1$. Hence inequality \eqref{eq:moment_carre_deux} is an equality and we have $z'=L$, $\cQ'$-almost surely. There is only one possible distribution for $z'$ which is that of the likelihood ratio $L=\frac{dp_1}{dp_0}$ under $p_0$, hence weak convergence follows.
	
	\underline{\textit{Proof of $(ii)$}} To establish point $(ii)$, we must show that $\dE_{0,\lambda} [\phi(L^{(\lambda)})] \underset{\lambda \to \infty}{\longrightarrow} \dE_0 [\phi(L)]$ where $\phi(x):=x\log(x)$. We already have from point $(i)$ that the distribution of $\phi(L^{(\lambda)})$ weakly converges to the distribution of $\phi(L)$. The conclusion will follow if we can show that the family of random variables $\phi(L^{(\lambda)})$ under $p_{0,\lambda}$ is uniformly integrable. This follows in turn if we can establish that for some $\eps>0$, one has
	\begin{equation*}
		\sup_{\lambda>0} \dE_{0,\lambda} [|\phi(L^{(\lambda)}) |^{1+\eps}] < \infty \, .
	\end{equation*}
	For $x\in [0,1]$, $|\phi(x)|\leq 1/e$. For $x\in [1,+\infty)$ and any $\eps \in (0,1)$, one has $\phi(x)^{1+\eps}\leq C_\eps x^2$ for some finite constant $C_\eps$. Thus
	\begin{equation*}
		\dE_{0,\lambda} [|\phi(L^{(\lambda)})|^{1+\eps}] \leq \frac{1}{e}+ C_\eps c \, ,
	\end{equation*}
	where $c$ is the constant appearing in \eqref{eq:proposition:KL_convergence:second_moment_constant}. The desired uniform integrability therefore holds, and point $(ii)$ follows. 
\end{proof}

{We can now apply Proposition \ref{proposition:KL_convergence} in our context:}
\begin{proposition}\label{prop:gaussian_KL_limit}
Denoting
\begin{equation}
	\KLls_d := \KL(\dPls_d || \dPl_d) \, ,
\end{equation} one has the following:
\begin{equation}\label{eq:prop:gaussian_KL_limit} 
	\forall d \geq 1, \; \KLls_d \underset{\lambda \to \infty}{\longrightarrow} \KLs_d = - \frac{1}{2}\sum_{\beta \in \cX_{d-1}} \log(1-s^{2 |\beta|}) = \frac{1}{2} \log\Cs_{d,2} \, 
\end{equation} 
where 
\begin{equation}\label{eq:rappel:KLs_bis}
	\Cs_{d,2} = \dEls_d[L_d] = \sum_{n \geq 1} A_{d,n}(s^2)^{n-1}  \, .
\end{equation}
\end{proposition}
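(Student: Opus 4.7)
The plan is to apply Proposition~\ref{proposition:KL_convergence} with $p_{0,\lambda} := \gwl_d \otimes \gwl_d$ and $p_{1,\lambda} := \dpls_d$, where these distributions on $\dR^{\cX_{d-1}} \times \dR^{\cX_{d-1}}$ are obtained from $\dPl_d$ and $\dPls_d$ via the centering/change-of-basis transformation of Section~\ref{subsection:gaussian_approx}. By the identity \eqref{eq:equality_of_KL_centered}, $\KL(p_{1,\lambda} \Vert p_{0,\lambda}) = \KLls_d$, so it suffices to verify the hypotheses of Proposition~\ref{proposition:KL_convergence} and then identify the limit with $\KLs_d$.

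For the weak convergence hypothesis \eqref{eq:proposition:KL_convergence:cv_distribution}, Lemma~\ref{lemma:cv_weak_dual} directly provides $p_{1,\lambda} \overset{w}{\to} p_1^{(s)}$, where $p_1^{(s)}$ is the law of the Gaussian vector $(z,z')$ with covariance \eqref{eq:covariance_z_bis}. Applying the same lemma with correlation parameter $0$ (note that $\dPl_d = \dPls_d\vert_{s=0}$, so the proof of Lemma~\ref{lemma:cv_weak_dual} transports verbatim) yields $p_{0,\lambda} \overset{w}{\to} p_0^{(s)} := p_1^{(0)}$, namely a vector of i.i.d. standard Gaussians indexed by $\cX_{d-1}$.

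For the uniform second-moment hypothesis \eqref{eq:proposition:KL_convergence:second_moment_constant}, the likelihood ratio $L^{(\lambda)}$ between $p_{1,\lambda}$ and $p_{0,\lambda}$ is precisely $L_d$ transported through the invertible affine change of variables, so by Corollary~\ref{corr:cyclic_moments} and \eqref{eq:rappel:KLs_bis} one has $\dE_{0,\lambda}[(L^{(\lambda)})^2] = \Cs_{d,2} = \Phi_d(s^2)$ for \emph{every} $\lambda > 0$. This is finite for $s \in [0,1)$ since $\Phi_d$ has radius of convergence $1$ by Proposition~\ref{prop:A_dn}. To verify the matching second-moment hypothesis \eqref{eq:proposition:KL_convergence:second_moment_cv}, I compute $\dE_{p_0^{(s)}}[L^2]$ for the Gaussian limits: since the covariance \eqref{eq:covariance_z_bis} is block-diagonal with $2 \times 2$ blocks of correlation $s^{|\beta|}$ indexed by $\beta \in \cX_{d-1}$, the likelihood ratio factorizes and a direct bivariate Gaussian computation gives
\[
\dE_{p_0^{(s)}}[L^2] \; = \; \prod_{\beta \in \cX_{d-1}} \frac{1}{1 - s^{2|\beta|}},
\]
which coincides with $\Cs_{d,2}$ thanks to the product-form identity already derived at the end of the proof of Proposition~\ref{prop:gaussian_KL_ineq}. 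Thus both sides of \eqref{eq:proposition:KL_convergence:second_moment_cv} equal $\Cs_{d,2}$ trivially.

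With all hypotheses of Proposition~\ref{proposition:KL_convergence} verified, conclusion $(ii)$ yields $\KLls_d \to \KL(p_1^{(s)} \Vert p_0^{(s)})$. The limiting KL divergence is evaluated via the standard centered-Gaussian formula $\KL = -\tfrac{1}{2} \log \det \Sigma$ applied block by block, giving $-\tfrac{1}{2} \sum_{\beta \in \cX_{d-1}} \log(1 - s^{2|\beta|}) = \KLs_d$, which is exactly \eqref{eq:prop:gaussian_KL_limit}. No step seems to be a genuine obstacle: the Gaussian identifications reuse computations already present in the proof of Proposition~\ref{prop:gaussian_KL_ineq}, and all the analytic heavy lifting (weak convergence, uniform second moments) is supplied by Lemma~\ref{lemma:cv_weak_dual} and Corollary~\ref{corr:cyclic_moments}; the only point to handle cleanly is the deferred Proposition~\ref{proposition:KL_convergence}, proved in the appendix.
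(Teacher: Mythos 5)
Your proof is correct and follows essentially the same route as the paper's: apply Proposition~\ref{proposition:KL_convergence} to the centered, change-of-basis distributions $\gwl_d \otimes \gwl_d$ and $\dpls_d$, invoke Lemma~\ref{lemma:cv_weak_dual} for the weak-convergence hypothesis, use Corollary~\ref{corr:cyclic_moments} (giving $\dE_{0,\lambda}[(L^{(\lambda)})^2]=\Cs_{d,2}$ independent of $\lambda$) for both second-moment hypotheses, and identify the limiting Gaussian KL via the block-diagonal covariance. You merely spell out two things the paper leaves implicit — the weak convergence of $p_{0,\lambda}$ (via the $s=0$ specialization, or equivalently just the marginal of the joint convergence) and the explicit computation of $\dE_{p_0^{(s)}}[L^2]$ — which is a harmless elaboration, not a different argument.
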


\begin{proof}[Proof of Proposition \ref{prop:gaussian_KL_limit}]
The proof is the same as for Proposition \ref{prop:gaussian_KL_ineq}, except that we want in addition to apply Proposition \ref{proposition:KL_convergence},
with $\cX = \dR^{\cX_d} \times \dR^{\cX_d} $, $p_{0,\lambda}=\gwl_{d+1} \otimes \gwl_{d+1} $ and $p_{1,\lambda}=\dpls_{d+1} $. The weak convergence of $p_{0,\lambda}$ and $p_{1,\lambda}$ in the limit $\lambda \to \infty$ are ensured by 
Lemma \ref{lemma:cv_weak_dual},
hence the only additional thing that we need to check is that the likelihood ratio $\ell_\lambda$ of the pairs $(y,y')$ has a second moment that converges to that of the limiting likelihood ratio as $\lambda\to\infty$. Since the transformations $N \to y$ in \eqref{eq:y_alpha} and $N' \to y'$ in \eqref{eq:y'_alpha} are bijective, one has
\begin{equation*}
	\dE_{\gwl_{d} \otimes \gwl_{d}}[\ell_\lambda^2] = \dEl_{d}[L_d^2] = \dEls_{d}[L_d] \, ,
\end{equation*} which by \eqref{eq:rappel:KLs} does not depend on $\lambda$, and is readily seen to coincide with the second moment of the likelihood ratio between the limiting Gaussian distributions. Proposition \ref{proposition:KL_convergence} hence applies and we can conclude that 
\begin{equation*}
	\KLls_d \underset{\lambda \to \infty}{\longrightarrow} \KLs_d \, .
\end{equation*}
\end{proof}

\section{Other postponed proofs}

\subsection{Proof of Lemma \ref{lemma:initialize_lower_bound_on_KL}}
\label{appendix:proof:lemma:initialize_lower_bound_on_KL}

\begin{proof}[Proof of Lemma \ref{lemma:initialize_lower_bound_on_KL}]
	Since $s> \sqrt{\alpha}$, we have that $\KLs_{d} \to \infty$ when $d \to \infty$, in view of \eqref{eq:rappel:KLs}, the fact that $A_{d,n} \to A_n$ when $d \to \infty$, and Otter's formula \eqref{eq:theorem:otter}. For an arbitrarily large $K = K(c,\eps)$ to be specified later, we can thus choose $d_0 = d_0(s,c,\eps)$ such that $\KLs_{d_0}\geq K$. 
	
	Moreover, in view of \eqref{eq:prop:gaussian_KL_limitinf} 
	we can choose $\lambda_1 = \lambda_1(s,c,\eps)$ such that
	$$
	\lambda\geq \lambda_1\Rightarrow \KLls_{d_0}\geq \frac{1}{2}\KLs_{d_0}
	= \frac{1}{4}\log(\Cs_{d_0,2}) \, .
	$$
	We write then
	\begin{equation*}
		\KLls_{d_0}
		= \int_0^{\infty}\log(x)\dPls_{d_0}(L_{d_0}\in dx)
		\leq \int_1^{\infty}\log(x)\dPls_{d_0}(L_{d_0}\in dx)= \int_{1}^{\infty}\frac{1}{u}\dPls_{d_0}(L_{d_0}\geq u)du.
	\end{equation*}
	We consider two reals $A,B$ with $1<A<B$, and decompose the last integral as
	\begin{flalign*}
		& \int_{1}^{A}\frac{1}{u}\dPls_{d_0}(L_{d_0}\geq u)du
		+ \int_{A}^{B}\frac{1}{u}\dPls_{d_0}(L_{d_0}\geq u)du
		+ \int_{B}^{\infty}\frac{1}{u}\dPls_{d_0}(L_{d_0}\geq u)du \\
		\leq &
		\int_1^{A}\frac{1}{u}du+\dPls_{d_0}(L_{d_0}\geq A)\int_A^B\frac{du}{u}+\frac{1}{B} \int_B^\infty\dPls_{d_0}(L_{d_0}\geq u)du\\
		\leq & \log(A)+\dPls_{d_0}(L_{d_0}\geq A)\log(B/A)+\frac{1}{B}\Cs_{d_0,2} \, ,
	\end{flalign*} 
	where in the last step we have used
	\begin{equation*}
		\Cs_{d_0,2}=\dEls_{d_0}[L_{d_0}] = \int_0^{\infty} x\dPls_{d_0}(L_{d_0}\in dx) = \int_0^{\infty} \dPls_{d_0}(L_{d_0}\geq u)du \geq \int_B^{\infty} \dPls_{d_0}(L_{d_0}\geq u)du \, .
	\end{equation*}
	Combining these inequalities yields
	\begin{equation*}
		\frac{1}{4}\log(\Cs_{d_0,2}) \leq \log(A)+\dPls_{d_0}(L_{d_0}\geq A)\log(B/A)+\frac{1}{B}\Cs_{d_0,2} \, ,
	\end{equation*}
	hence
	\begin{equation*}
		\dPls_{d_0}(L_{d_0}\geq A)\geq \frac{\frac{1}{4}\log(\Cs_{d_0,2})-\log(A) -\frac{1}{B}\Cs_{d_0,2}}{\log(B/A)} \, .
	\end{equation*}
	Choosing $A=\left(\Cs_{d_0,2}\right)^{1/16}$ and $B=16 \Cs_{d_0,2}/\log(\Cs_{d_0,2})$ gives, after re-expressing the result in terms of $\KLs_{d_0} = \frac{1}{2} \log \Cs_{d_0,2}$, 
	\begin{equation*}
		\dPls_{d_0}(L_{d_0}\geq A)\geq 
		\frac{1}{4} \frac{\KLs_{d_0}}{\log(16)+\frac{15}{8}\KLs_{d_0} -\log(2 \KLs_{d_0}) } \, .
	\end{equation*}
	Since the function $K \mapsto \frac{1}{4} \frac{K}{\log(16)+\frac{15}{8}K -\log(2 K) } $ tends to $2/15$ as $K\to\infty$, for any $c\in(0,2/15)$ we can find a constant $K_1(c)$ such that $\KLs_{d_0} \geq K_1(c)$ implies $\dPls_{d_0}( L_{d_0}\geq A)\geq c$.
	
	Moreover, recalling that $\dEl_{d_0}[L_{d_0}]=1$ by construction of the likelihood ratio, Markov's inequality yields $\dPl_{d_0}(L_{d_0}\geq A) \leq A^{-1}$. Defining $K_2(\eps)=8 \log(1/\eps)$, one sees that $\KLs_{d_0} \geq K_2(\eps)$ implies $\dPl_{d_0}(L_{d_0}\geq A)\leq \eps$.
	
	The choice $K(c,\eps)=\max(K_1(c),K_2(\eps))$ thus ensures the statement of the Lemma, for the event $S=\set{L_{d_0}\geq A}$.
\end{proof}

\subsection{Proof of Proposition \ref{prop:bounding_LR_by_induction}} 
\label{appendix:proof:prop:bounding_LR_by_induction}

The proof of Proposition \ref{prop:bounding_LR_by_induction} relies on the following lemma. 
\begin{lemma}\label{lemma_bounds_Z}
	Assume $\lambda\geq 1$. The random variable $Z := Z_S$ defined in \eqref{eq:def_Z} verifies the following:
	\begin{itemize}
		\item[$(i)$] $\dEl_{d+1}[Z] = 0 \, .$
		\item[$(ii)$] $\dEls_{d+1}[Z] = \lambda s \dPls_{d}(S)\, .$
		\item[$(iii)$] $\dEl_{d+1}[Z^4] \leq 36 \lambda^4\dPl_d(S)^2+ 13\lambda^3 \dPl_d(S)\, .$
		\item[$(iv)$] $\Varls_{d+1}[Z] \leq \dEls_{d+1}[Z]+\lambda^2(1+s^2)\dPl_d(S)\, .$
	\end{itemize}
\end{lemma}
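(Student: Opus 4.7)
All four bounds reduce to explicit moment computations, exploiting the independence structure of the underlying Poisson variables (cf.\ Definition~\ref{def:correlated_model}) and the fact that a centered Poisson $\widetilde{P} = P - \mu$, with $P \sim \Poi(\mu)$, satisfies $\dE[\widetilde{P}^k] = \mu$ for $k = 2, 3$ and $\dE[\widetilde{P}^4] = 3\mu^2 + \mu$. For $(i)$, under $\dPl_{d+1}$ the $\widetilde{N}_\tau$ and $\widetilde{N}'_{\tau'}$ are centered and mutually independent, so every summand has mean zero. For $(ii)$, decomposing $\widetilde{N}_\tau = \widetilde{\Delta}_\tau + \sum_\sigma \widetilde{M}_{\tau, \sigma}$ and $\widetilde{N}'_{\tau'} = \widetilde{\Delta}'_{\tau'} + \sum_\sigma \widetilde{M}_{\sigma, \tau'}$ (centered versions of the variables from Definition~\ref{def:correlated_model}), only the shared atom $\widetilde{M}_{\tau,\tau'}$ contributes, giving $\Cov(\widetilde{N}_\tau, \widetilde{N}'_{\tau'}) = \Var(\widetilde{M}_{\tau, \tau'}) = \lambda s \dPls_d(\tau, \tau')$; summing over $S$ yields $\lambda s \dPls_d(S)$.

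For $(iii)$, expand $Z^4 = \sum \prod_i \widetilde{N}_{\tau_i} \widetilde{N}'_{\tau'_i}$ and take expectation under $\dPl_{d+1}$; by independence of the $\widetilde{N}$-family from the $\widetilde{N}'$-family, this factors as $\sum \dEl_{d+1}[\prod_i \widetilde{N}_{\tau_i}] \cdot \dEl_{d+1}[\prod_i \widetilde{N}'_{\tau'_i}]$, each factor being nonzero only when the partition of $\{1,\ldots,4\}$ by equality of the $\tau_i$'s (resp.\ $\tau'_i$'s) has no singleton block, leaving only the $(4)$ and $(2,2)$ patterns. Writing $u_\tau := \lambda \GWl_d(\tau)$ and $v_\tau := \sum_{\tau': (\tau,\tau') \in S} u_{\tau'}$, the Wick-type $(2,2)$+$(2,2)$ contribution assembles into $3\big(\sum_\tau u_\tau v_\tau\big)^2 = 3\lambda^4 \dPl_d(S)^2$, which is the dominant $\lambda^4$ term. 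The remaining pieces (from the non-Gaussian cumulants $\dE[\widetilde{P}^3], \dE[\widetilde{P}^4]$ and from overlap sums of the form $\sum_{\alpha : (\sigma,\alpha),(\rho,\alpha) \in S} u_\alpha$) are bounded at order $\lambda^3 \dPl_d(S)$ or $\lambda^4 \dPl_d(S)^2$ by combining $u_\tau, v_\tau \leq \lambda$ with the elementary inequality $\sum_\tau x_\tau^2 \leq \big(\sum_\tau x_\tau\big)^2$ for nonnegative sequences (applied to $x_\tau = u_\tau v_\tau$ and analogous quantities), matching the form of the claimed bound.

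The main obstacle is $(iv)$, where the correlation under $\dPls_{d+1}$ precludes a direct factorization. Expand $Z^2$, substitute the $\Delta$-$M$ decomposition (sixteen terms per summand), and discard all vanishing contributions — any isolated centered factor $\widetilde{\Delta}, \widetilde{\Delta}'$, or $\widetilde{M}$, and any "mixed" pairing $\widetilde{\Delta}_\tau \widetilde{\Delta}'_{\tau'}$ (zero by independence of the $\Delta$ and $\Delta'$ families). An Isserlis-type expansion of the surviving all-$\widetilde{M}$ block, followed by regrouping, yields
\begin{equation*}
\dEls_{d+1}[Z^2] = \lambda^2 \dPl_d(S) + \lambda s \dPls_d(S) + (\lambda s)^2 \dPls_d(S)^2 + (\lambda s)^2 R,
\end{equation*}
with $R := \sum_{(\tau_1,\tau'_1),(\tau_2,\tau'_2) \in S} \dPls_d(\tau_1, \tau'_2) \dPls_d(\tau_2, \tau'_1)$. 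Subtracting $(\dEls_{d+1}[Z])^2 = (\lambda s \dPls_d(S))^2$ and invoking $(ii)$ gives $\Varls_{d+1}[Z] = \dEls_{d+1}[Z] + \lambda^2 \dPl_d(S) + (\lambda s)^2 R$. The decisive step is the bound $R \leq \dPl_d(S)$, proved by extending the inner sum over $(\tau_2, \tau'_2) \in S$ to the full $\cX_d \times \cX_d$ (all summands are nonnegative) and invoking the $\GWl_d$-marginals of $\dPls_d$:
\begin{equation*}
\sum_{\tau_2, \tau'_2 \in \cX_d} \dPls_d(\tau_1, \tau'_2) \dPls_d(\tau_2, \tau'_1) = \GWl_d(\tau_1) \GWl_d(\tau'_1),
\end{equation*}
so that the outer sum over $(\tau_1, \tau'_1) \in S$ gives exactly $\dPl_d(S)$, yielding $(iv)$.
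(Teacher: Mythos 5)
Your proof is correct and takes essentially the same route as the paper: Poisson centered-moment identities, the $\Delta$--$M$ decomposition, partition/pairing combinatorics for $(iii)$, and for $(iv)$ elimination of the vanishing degree patterns followed by a marginal-extension bound on the overlap sum $R$, which is precisely the paper's bound on its term $D$ (your exact identity $\dEls_{d+1}[Z^2] = \lambda^2\dPl_d(S) + \lambda s\dPls_d(S) + (\lambda s)^2\dPls_d(S)^2 + (\lambda s)^2 R$ is what the paper obtains, implicitly, by summing $\sigma(2,2,0)+\sigma(2,0,2)+\sigma(0,2,2)+A+B+C+D$). Two minor remarks: for $(iii)$ your sketch captures the correct structure (only the $(4)$ and $(2,2)$ partition types survive, with $\GWl_d \le 1$ and $\sum_\tau x_\tau^2 \le (\sum_\tau x_\tau)^2$ controlling remainders) but does not carry out the bookkeeping that produces the explicit constants $36$ and $13$; and the third centered moment $\dE[\widetilde{P}^3]$ you invoke never actually enters, since the $(3,1)$ partition type contains a singleton block and thus vanishes.
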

\begin{proof}[Proof of Lemma \ref{lemma_bounds_Z}]
	We shall rely on the moments of Poisson random variables, in particular the following identities for $X \sim \Poi(\mu)$, that follow from elementary computations: 
	\begin{equation}
		\label{eq:moments_Poisson}
		\dE[X]=\mu \ , \qquad \Var[X]=\mu \ , \qquad \dE[(X-\mu)^4]= 3 \mu^2 +\mu \, .
	\end{equation}
	
	Recall the definition of $Z=Z_S$:
	\begin{equation*}
		Z_S :=\sum_{(\tau,\tau')\in S}\widetilde{N}_\tau \widetilde{N}'_{\tau'} \, ,
	\end{equation*} where
	\begin{equation*}
		\widetilde{N}_\tau=N_\tau -\lambda \GWl_d(\tau) \quad \mbox{and} \quad \widetilde{N}'_\tau= N'_\tau-\lambda \GWl_d(\tau)\, .
	\end{equation*}
 
	\underline{\textit{Point $(i)$}} is immediate because under $\dPl_{d+1}$, for each pair $(\tau,\tau')\in\cX_d^2$,  the random variables $\widetilde{N}_\tau$, $\widetilde{N}'_{\tau'}$ are independent and have zero mean.\\
	
	\underline{\textit{Point $(ii)$}}: recall that under $\dPls_{d+1}$, $N$ and $N'$ are sampled as follows:
	\begin{equation*}
		N_{\tau} = \Delta_\tau + \sum_{\theta' \in \cX_{d}} M_{\tau, \theta'} \quad \mbox{and} \quad
		N'_{\tau} = \Delta'_{\tau} + \sum_{\theta \in \cX_{d}} M_{\theta, \tau'} \, ,
	\end{equation*} with $\Delta_{\tau}$, $\Delta'_{\tau}$ and $M_{\theta, \theta'}$ independent Poisson random variables with parameters $\lambda (1-s) \GWl_{d}(\tau)$ for the first two and $\lambda s \dPls_{d}(\theta, \theta')$ for the last one. We introduce the notations:
	\begin{equation*}
		\widetilde{\Delta}_\tau:=\Delta_\tau-\lambda (1-s) \GWl_d(\tau), \quad  \widetilde{\Delta}'_{\tau'}:=\Delta'_{\tau'}-\lambda (1-s) \GWl_d(\tau'), \quad \widetilde{M}_{\theta,\theta'}=M_{\theta,\theta'}-\lambda s \dPls_d(\theta,\theta') \, .
	\end{equation*}
	Since the marginals of $\dPls_d$ are given by $\GWl_d$, it holds that 
	\begin{equation*}
		\widetilde{N}_\tau=\widetilde{\Delta}_\tau+\sum_{\theta' \in \cX_{d}}\widetilde{M}_{\tau,\theta'} \quad \mbox{and} \quad \widetilde{N}'_{\tau'}=\widetilde{\Delta}'_{\tau'}+\sum_{\theta \in \cX_{d}} \widetilde{M}_{\theta,\tau'} \, ,
	\end{equation*} which shows that 
	\begin{equation*}
		\dEls_{d+1}[ \widetilde{N}_\tau \widetilde{N}'_{\tau'}]=\Varls_{d+1}( M_{\tau,\tau'})=\lambda s \dPls_d(\tau,\tau') \,.
	\end{equation*} Point $(ii)$  follows.\\
	
	\underline{\textit{Point $(iii)$}}: Write
	\begin{equation*}
		\dEl_{d+1}[Z^4] = \sum_{\substack{(\tau_1,\tau'_1) \in S, (\tau_2,\tau'_2) \in S \\ (\tau_3,\tau'_3) \in S (\tau_4,\tau'_4) \in S}} \dEl_{d+1}\left[ \prod_{i =1}^{4} \widetilde{N}_{\tau_i}\right] \dEl_{d+1}\left[ \prod_{i =1}^{4} \widetilde{N}'_{\tau'_i}\right] \, .
	\end{equation*} 
	Since the $\widetilde{N}_\tau$ are centered and independent the first expectation value in the above equation vanishes unless either $\tau_1=\tau_2=\tau_3=\tau_4$ or $\tau_1=\tau_2 \neq \tau_3=\tau_4$ (and two cases obtained by permutation of the indices). We let $\sigma(u,v)$ denote the summation of such terms with $|\set{\tau_i,i\in[4]}|=u$, $|\set{\tau'_i,i\in[4]}|=v$, for $u,v\in \set{1,2}$.
	
	We have $\sigma(1,1)=\sum_{(\tau,\tau')\in S}\dEl_{d+1}[\widetilde{N}^4_{\tau}] \dEl_{d+1} [\widetilde{N}'^4_{\tau'}]$. Using the value of the fourth centered moment of a Poisson random variable recalled in \eqref{eq:moments_Poisson}, and 
	the fact that $\GWl_d(\tau),\GWl_d(\tau')\leq 1$, one obtains
	\begin{flalign*}
		\sigma(1,1) & = \sum_{(\tau,\tau')\in S}\left[3\lambda^2 \GWl_d(\tau)^2+\lambda \GWl_d(\tau)\right]\left[3\lambda^2 \GWl_d(\tau')^2+\lambda \GWl_d(\tau')\right]\\
		&\leq 9\lambda^4\sum_{(\tau,\tau')\in S} \GWl_d(\tau)^2 \GWl_d(\tau')^2+[6\lambda^3+\lambda^2]\dPl_{d}(S) 
		\leq 9\lambda^4 \dPl_{d}(S)^2+ 7\lambda^3 \dPl_d(S) \,
	\end{flalign*} since $\lambda \geq 1$ and thanks to the easy bound $\sum_i x^2_i \leq \left( \sum_i x_i \right)^2$ for positive $x_i$.
	
	The term $\sigma(1,2)$ verifies
	\begin{flalign*}
		\sigma(1,2) & \leq  3\sum_{\tau} \dEl_{d+1}[\widetilde{N}^4_\tau]\sum_{\substack{\tau':(\tau,\tau')\in S \\ \theta':(\tau,\theta')\in S}}\dEl_{d+1}[\widetilde{N}'^2_{\tau'}]\dEl_{d+1}[\widetilde{N}'^2_{\theta'}]\\
		&= 3\sum_\tau \left[3\lambda^2 \GWl_d(\tau)^2+\lambda \GWl_d(\tau)\right] \sum_{\substack{\tau':(\tau,\tau')\in S \\ \theta':(\tau,\theta')\in S}}\lambda^2 \GWl_d(\tau') \GWl_d(\theta')\\
		&\leq   9 \lambda^4\sum_\tau \GWl_d(\tau)^2\sum_{\substack{\tau':(\tau,\tau')\in S \\ \theta':(\tau,\theta')\in S}}\GWl_d(\tau')\GWl_d(\theta')+3\lambda^3\sum_{(\tau,\tau')\in S}\GWl_d(\tau)\GWl_d(\tau') \, ,
	\end{flalign*} where we used the fact that $\sum_{\theta':(\tau,\theta')\in S}\GWl_d(\theta')\leq 1$. Note now that 
	\begin{equation*}
		\sum_{\substack{\tau':(\tau,\tau')\in S \\ \theta':(\tau,\theta')\in S}}\GWl_d(\tau')\GWl_d(\theta')\leq \dPl_d(S)^2 \qquad \mbox{and} \qquad \sum_\tau \GWl_d(\tau)^2 \leq 1
	\end{equation*}to conclude that $\sigma(1,2)\leq 9\lambda^4 \dPl_d(S)^2+3\lambda^3 \dPl_d(S)$; the same bound obviously holds also for $\sigma(2,1)$.\\
	
	Finally, $\sigma(2,2)$ can be bounded as follows. Having fixed $\tau_1$, there must be exactly one index $j\in\{2,3,4\}$ such that $\tau_j=\tau_1$. Consider thus that $j=3$ and $\tau_4=\tau_2$. By symmetry, when accounting only for this case, we just need to multiply our evaluation by 3. This leads to the following bound:
	\begin{flalign*}
		\sigma(2,2)&\leq 3\sum_{\tau_1,\tau_2}\lambda^2\GWl_d(\tau_1)\GWl_d(\tau_2)\sum_{\tau'_i,i\in[4]}\one_{(\tau_1,\tau'_1)\in S,(\tau_2,\tau'_2)\in S,(\tau_1,\tau'_3)\in S,(\tau_2,\tau'_4)\in S}\one_{|\{\tau'_i\}_i|=2}\, \dEl_d\left[\prod_{i=1}^{4} \widetilde{N}'_{\tau'_i}\right]\\
		&\leq 3\sum_{\tau_1,\tau_2}\lambda^2\GWl_d(\tau_1)\GWl_d(\tau_2)\sum_{\tau'_1,\tau'_2}3\lambda^2\GWl_d(\tau'_1)\GWl_d(\tau'_2)\one_{(\tau_1,\tau'_1)\in S,(\tau_2,\tau'_2)\in S} \, .
	\end{flalign*} Indeed, there are three possibilities for the choice of index $j'$ such that $\tau'_j=\tau'_1$, and for each such choice the contribution is upper bounded by the same term. This yields $\sigma(2,2)\leq 9\lambda^4 \dPl_d(S)^2$.
	
	Summing our bounds on $\sigma(u,v)$ for $u,v\in\set{1,2}$ yields $(iii)$. \\
	
	\underline{\textit{Point $(iv)$}}: Write
	\begin{multline*}
		\dEls_{d+1}(Z^2) \\
		= \dEls_{d+1} \sum_{(\tau,\tau')\in S}\sum_{(\theta,\theta')\in S}\left[\widetilde{\Delta}_\tau+\sum_{u'}\widetilde{M}_{\tau,u'}\right]\left[\widetilde{\Delta}'_{\tau'}+\sum_u \widetilde{M}_{u,\tau'}\right]\left[\widetilde{\Delta}_\theta+\sum_{v'}\widetilde{M}_{\theta,v'}\right]\left[\widetilde{\Delta}'_{\theta'}+\sum_v \widetilde{M}_{v,\theta'}\right] \, .
	\end{multline*} When expanding the product of brackets, the only terms that will yield a non-zero expectation must have the following sequence of degrees in variables $(\widetilde{\Delta},\widetilde{\Delta}',\widetilde{M})$: $(2,2,0)$, $(2,0,2)$, $(0,2,2)$, or $(0,0,4)$. Denote $\sigma(u,v,w)$ the summation of terms corresponding to exponents $(u,v,w)$. We have:
	\begin{flalign*}
		\sigma(2,2,0) =\sum_{(\tau,\tau')\in S}\dEls_d [\widetilde{\Delta}^2_\tau\widetilde{\Delta}'^2_{\tau'}] = \lambda^2(1-s)^2\dPl_d(S) \, .
	\end{flalign*} We next have
	\begin{flalign*}
		\sigma(2,0,2) = \sum_{(\tau,\tau')\in S}\dEls_d \left[\widetilde{\Delta}^2_\tau \sum_u\widetilde{M}^2_{u,\tau'}\right] 
		= \lambda^2 s (1-s)\dPl_d(S)\, ,
	\end{flalign*} and the same expression holds for $\sigma(0,2,2)$. We finally evaluate $\sigma(0,0,4)$. It reads
	\begin{flalign*}
		\sigma(0,0,4)=\sum_{\substack{(\tau,\tau')\in S \\ (\theta,\theta')\in S}}\sum_{u,u',v,v'}\dEls_{d+1} \left[ \widetilde{M}_{\tau,u'}\widetilde{M}_{u,\tau'}\widetilde{M}_{\theta, v'}\widetilde{M}_{v,\theta'} \right] \, .
	\end{flalign*} The non-zero terms in this expectation must comprise either the same term at the power 4, or two distinct terms each at power 2. This yields 4 contributions, that we denote by $A,B,C,D$, which satisfy
	\begin{flalign*}
		A & = \sum_{(\tau,\tau')\in S}\dEls_{d+1} [\widetilde{M}_{\tau,\tau'}^4] =  \sum_{(\tau,\tau')\in S}\left[ 3\lambda^2 s^2 \dPls_d(\tau,\tau')^2+ \lambda s \dPls_d(\tau,\tau')\right] \, , \\
		B & = \sum_{(\tau,\tau')\in S}\dEls_{d+1}[\widetilde{M}^2_{\tau,\tau'}]\sum_{\substack{(\theta,\theta')\in S \\ (\theta,\theta') \neq (\tau,\tau')}}\dEls_{d+1} [\widetilde{M}^2_{\theta,\theta'}] = \lambda^2 s^2 \dPls_d(S)^2-\lambda^2 s^2 \sum_{(\tau,\tau')\in S}\dPls_d(\tau,\tau')^2 \, ,
	\end{flalign*} and
	\begin{flalign*}
		C & = \sum_{(\tau,\tau')\in S} \left[\sum_{u'}\dEls_{d+1} [\widetilde{M}^2_{\tau,u'}]\right] \left[\sum_{u: (u,\tau')\ne (\tau,u')} \dEls_{d+1}[\widetilde{M}^2_{u,\tau'}] \right] \\
		& =
		\lambda^2 s^2 \dPl_d(S) -\lambda^2 s^2 \sum_{(\tau,\tau')\in S}\dPls_d(\tau,\tau')^2 \, , \\
		D & =  \sum_{(\tau,\tau')\in S} \sum_{\substack{(\theta,\theta')\in S \\ (\tau,\theta') \neq (\theta,\tau')}} \dEls_{d+1} [\widetilde{M}^2_{\tau,\theta'}] \dEls_{d+1} [\widetilde{M}^2_{\theta,\tau'}] \\
		& \leq 
		\sum_{(\tau,\tau')\in S}\sum_{\theta,\theta'}\dEls_{d+1} [\widetilde{M}^2_{\tau,\theta'}] \dEls_{d+1} [\widetilde{M}^2_{\theta,\tau'}] -\lambda^2 s^2\sum_{(\tau,\tau')\in S}\dPls_d(\tau,\tau')^2
		\\
		& =
		\lambda^2 s^2 \dPl_d(S)-\lambda^2 s^2\sum_{(\tau,\tau')\in S}\dPls_d(\tau,\tau')^2.
	\end{flalign*}
	
	Summing the expressions of $\sigma(2,2,0)$, $\sigma(2,0,2)$, $\sigma(0,2,2)$, $A$, $B$, $C$ and the upper bound of $D$ we obtain
	\begin{flalign*}
		\dEls_{d+1}[Z^2] & \leq  \dEls_{d+1}[Z]^2+\dEls_{d+1}[Z]+\lambda^2(1+s^2)\dPl_d(S) \, ,
	\end{flalign*} and upper bound $(iv)$ follows. 
\end{proof}

We now turn to the proof of Proposition \ref{prop:bounding_LR_by_induction}.
\begin{proof}[Proof of Proposition \ref{prop:bounding_LR_by_induction}] 
	Assuming that $S\subset \cX_d^2$ is such that
	\begin{equation*}
		\dPl_d(S)\leq \eps \quad \mbox{and} \quad \dPls_d(S)\geq c \, ,
	\end{equation*} our goal is to choose a threshold $\sigma \geq 0$ such that
	\begin{equation*}
		\dPl_{d+1}(Z\geq \sigma)\leq \frac{1}{2}\dPl_d(S) \leq \frac{\eps}{2}  \quad \mbox{and} \quad \dPls_{d+1}(Z\geq \sigma)\geq c \, .
	\end{equation*}
	
	\underline{\textit{First point.}} Using point $(iii)$ of Lemma \ref{lemma_bounds_Z}, and Markov's inequality we have
	\begin{equation*}
		\dPl_{d+1}(Z\geq \sigma) \leq \frac{1}{\sigma^4}\dEl_{d+1} [Z^4]\leq \frac{1}{\sigma^4} (36 \lambda^4\dPl_{d}(S)^2+13\lambda^3 \dPl_{d}(S)) \, .
	\end{equation*} It thus suffices to choose $\sigma^4=\max\left(144\lambda^4 \dPl_d(S),52 \lambda^3\right)$ to ensure the first property, that is guarantying that $\dPl_{d+1}(Z\geq \sigma)\leq \frac{1}{2}\dPl_d(S)$. We can a fortiori take $\sigma=\max(4 \lambda \dPl_d(S)^{1/4}, 3 \lambda ^{3/4})$.
	
	\underline{\textit{Second point.}} By point $(ii)$ of Lemma \ref{lemma_bounds_Z}, since $\dEls_{d+1}[Z]=\lambda s \dPls_d(S)\geq \lambda s c$  we shall have $\dEls_{d+1}[Z]\geq 2\sigma$ provided
	$$
	8 \dPl_d(S)^{1/4}\leq s c \hbox{ and } 6 \lambda ^{-1/4} \leq s c \, ,
	$$ or equivalently
	\begin{equation}\label{eq:tmp_epsilon_0_lambda_0}
		\dPl_d(S)\leq \left(\frac{sc}{8}\right)^4\hbox{ and }\lambda \geq \left(\frac{6}{sc}\right)^4.
	\end{equation}
	This provides the conditions on $\lambda_0$ and $\eps$ required in the statement of the proposition, but let us assume that \eqref{eq:tmp_epsilon_0_lambda_0} is satisfied for now. Using $\sigma\leq \dEls_{d+1}[Z]/2$, Chebyshev's inequality as well as the bound $(iv)$ of Lemma \ref{lemma_bounds_Z}:
	\begin{flalign*}
		\dPls_{d+1}(Z\leq \sigma)& \leq \dPls_{d+1}\left( |Z-\dEls_{d+1}[Z]|\geq \frac{1}{2}\dEls_{d+1}[Z]\right)
		\leq 4 \frac{\Varls_{d+1}(Z)}{\dEls_{d+1}[Z]^2} \\
		&\leq 4\frac{ \lambda s \dPls_{d}(S)+2 \lambda^2 \dPl_d(S)}{\lambda^2 s^2 \dPls_{d}(S)^2}
		\leq \frac{4}{\lambda s c}+\frac{8 \dPl_d(S)}{s^2 c^2} \, .
	\end{flalign*}
	In order to ensure that $\dPls_{d+1}(Z\leq \sigma)\leq 1-c$, it thus suffices to ensure
	\begin{equation*}
		\frac{4}{\lambda s c}+\frac{8\dPl_d(S)}{s^2 c^2}\leq 1-c \, .
	\end{equation*}
	We can for instance impose
	\begin{equation*}
		\lambda\geq \frac{8}{s c (1-c)}, \quad \dPl_d(S)\leq \frac{(1-c)s^2 c^2}{16}\, .
	\end{equation*}
	Combining this requirement with \eqref{eq:tmp_epsilon_0_lambda_0} we have the announced property by setting
	$$
	\lambda\geq \lambda_0(s,c):=\max\left(\frac{8}{s c (1-c)},\left(\frac{6}{sc}\right)^4\right),\; \dPl_d(S)\leq \eps(s,c):=\min\left( \left(\frac{sc}{8}\right)^4, \frac{(1-c)s^2 c^2}{16}\right).
	$$
	Since $\dPls_{d+1}(Z\geq \sigma) \geq \dPls_{d+1}(Z > \sigma) = 1 - \dPls_{d+1}(Z \leq \sigma) $ the statement follows.
\end{proof}

\end{appendix}

\end{document}